\theoremstyle{plain} \newtheorem*{MMMDefinition}{Definition}
\theoremstyle{plain} \newtheorem*{Theorem1}{Theorem 1}
\theoremstyle{plain} \newtheorem*{ImprovedGS}{Theorem 2}
\theoremstyle{plain} \newtheorem*{Theorem3}{Theorem 3}
\theoremstyle{plain} \newtheorem*{MinorArcsCorollary}{Corollary \ref{MinorArcsCorollary}}
\theoremstyle{plain} \newtheorem*{GSID}{Proposition \ref{GSID}}
\theoremstyle{plain} 
\theoremstyle{plain} \newtheorem*{GS2.1}{Theorem A}
\theoremstyle{plain} \newtheorem*{GS2.2}{Theorem B}
\theoremstyle{plain} 
\theoremstyle{plain} 
\theoremstyle{plain} 
\theoremstyle{plain} 
\theoremstyle{plain} \newtheorem*{theorem2.1}{Theorem 2.1}
\theoremstyle{plain}
\newtheorem{theorem}{Theorem}[section]
\newtheorem{prop}[theorem]{Proposition}
\newtheorem{corollary}[theorem]{Corollary}
\newtheorem{lemma}[theorem]{Lemma}
\newtheorem{conjecture}[theorem]{Conjecture}
\theoremstyle{definition}
\theoremstyle{remark}
\newcommand{\dx}{\text{ } d}
\newcommand{\Z}{\mathbb{Z}}
\newcommand{\R}{\mathbb{R}}
\newcommand{\C}{\mathbb{C}}
\newcommand{\U}{\mathbb{U}}
\newcommand{\D}{\mathbb{D}}
\newcommand{\F}{\mathcal{F}}
\newcommand{\s}{\mathcal{S}}
\newcommand{\m}{\mathcal{M}}
\newcommand{\B}{\mathcal{B}}
\newcommand{\K}{\mathcal{K}}
\newcommand{\chimodq}{\chi \, (\text{mod } q)}
\newcommand{\modulo}{\text{mod }}
\numberwithin{equation}{section}  %Numbers equations in the form (section.number)
\begin{document}

\title[Multiplicative Mimicry and the P\'{o}lya-Vinogradov inequality]{Multiplicative mimicry and improvements of the P\'{o}lya-Vinogradov inequality\footnote{AMS subject classification: 11L03, 11L40}}
\author{Leo Goldmakher}
\date{}

\begin{abstract}
\noindent
We study exponential sums whose coefficients are completely multiplicative and belong to the complex unit disc. Our main result shows that such a sum has substantial cancellation unless the coefficient function is essentially a Dirichlet character. As an application we improve current bounds on odd order character sums. Furthermore, conditionally on the Generalized Riemann Hypothesis we obtain a bound for odd order character sums which is best possible.
\end{abstract}

\maketitle

%%%%%%%%%%%%%%%%%%%%%%%%%%%%%%%%%%%%%%%

\section{Introduction}
\label{SectIntro}

Character sums, which encode information on the distribution of primes in arithmetic progressions, have played a central role in the history of analytic number theory. In 1977, on the assumption of the Generalized Riemann Hypothesis, Montgomery and Vaughan \cite{MVExpSumsWMultCoeff} determined an upper bound on character sums which was known to be best-possible for quadratic characters. Recently, under the assumption of the GRH, Granville and Soundararajan \cite{GSPretentiousCharactersAndPV} proved that the Montgomery-Vaughan bound is optimal for characters of every even order. In the same work, they also made breakthroughs in our understanding of odd-order character sums. In the present paper, we develop their ideas further and (again conditionally on the GRH) obtain a best-possible bound on character sums for characters of every odd order, thus completing the story.

Our results on character sums will follow from a more general result, which we discuss first.
Let $\U$ denote the closed complex unit disc $\{|z| \leq 1\}$, and set
\begin{equation}
\label{DefnOfF}
\F = \{f : \Z \to \U \, \Bigl| \, f \text{  is completely multiplicative}\}
\end{equation}
i.e. for all integers $m$ and $n$, $f(mn) = f(m)f(n)$  and $|f(n)| \leq 1$.
Consider the exponential sum
\begin{equation}
\label{PrototypicalExpSumWMultCoeffs}
\sum_{n \leq x} \frac{f(n)}{n} e(n\alpha)
\end{equation}
where $f \in \F$, $\alpha \in \R$, and $e(X) = e^{2 \pi i X}$. By the triangle inequality, this sum has magnitude $\ll \log x$; moreover, this trivial bound is attained in the case $f(n) \equiv 1$ and $\alpha = 0$.\footnote{Here and throughout we use Vinogradov's notation $f \ll g$ to mean $f = O(g)$.} However, the sum cannot in general be this large unless there is a correlation between the behavior of $f(n)$ and $e(n\alpha)$, an unlikely event given that $f$ is completely multiplicative and $e(n\alpha)$ has an additive structure. Perhaps surprisingly, this unlikely scenario does occur non-trivially: taking $f = \chi_{-4}$ (the non-trivial Dirichlet character $(\modulo 4)$) and $\alpha = \frac{1}{4}$, we see that $f(n) = e\bigl(-\frac{1}{4}\bigr) e(n\alpha)$ for all odd integers $n$, from which one can deduce that the magnitude of the exponential sum (\ref{PrototypicalExpSumWMultCoeffs}) is $\gg \log x$. Our first result (Theorem 1) shows that this is essentially the only type of pathological example; precisely, we will show that if the sum has large magnitude, then $f(n)$ must closely mimic the behavior of a function of the form $\xi(n) n^{it}$, where $\xi$ is a Dirichlet character of small conductor and $t$ is a small real number. Moreover, the twist by $n^{it}$ is almost certainly superfluous (see Conjecture \ref{conjecturenotwist}).

Results of this type have been obtained before. In the late 1960s, Hal\'{a}sz \cite{HalaszMeanValMultFns} realized that the mean value of $f \in \F$ is small (in fact, zero) unless $f(n)$ mimics the behavior of a function of the form $n^{it}$. Much more recently, Granville and Soundararajan \cite{GSPretentiousCharactersAndPV} proved that a character sum ${\sum \chi(n)}$ has small magnitude unless $\chi$ mimics the behavior of a Dirichlet character $\xi$ of small conductor and opposite parity. The first part of the present paper is devoted to creating a hybrid of these two methods. When combined with results of Montgomery and Vaughan, this leads to strong bounds on exponential sums of the shape (\ref{PrototypicalExpSumWMultCoeffs}).

Before we can state our main results, we must set up some notation. A common feature in Hal\'{a}sz' and Granville-Soundararajan's work is a measure of how closely one function in $\F$ mimics another. We call this measure the {\em Multiplicative Mimicry Metric}:
\begin{MMMDefinition}[Multiplicative Mimicry Metric]
For any $f,g \in \F$ and any positive $X$, set
\begin{equation}
\label{MMDdefn}
\D(f, g; X) := \left( \sum_{p \leq X} \frac{1 - \text{Re } f(p) \, \overline{g(p)}}{p} \right)^{1/2} .
\end{equation}
\end{MMMDefinition}
\vspace{-.25in}
Note that because $f$ and $g$ are completely multiplicative, their behavior is entirely determined by their values at prime arguments, so the above definition uses all the data on the behavior of $f$ and $g$ (up to $X$). In \cite{GSPretentiousCharactersAndPV}, Granville and Soundararajan observed that this is a pseudometric -- in particular, it satisfies a triangle inequality: $\D(f_1, g_1; X) + \D(f_2, g_2; X) \geq \D(f_1 f_2, g_1 g_2; X)$ for any $f_i, g_i \in \F$. (The only way in which this measure fails to be an honest metric is the possibility that the distance from $f$ to itself might be non-zero.) Further discussions of this pseudometric and some unexpected applications of the triangle inequality can be found in \cite{GSCuriousZetaInequalities}.

Hal\'{a}sz proved that the mean value of a function $f \in \F$ is 0 unless $\D\bigl(f(n), \, n^{it}, \, \infty\bigr) \ll 1$ for some $t \in \R$; moreover, if such a $t$ exists, it is unique. Montgomery \cite{MontgomeryMittagLeffler} and, subsequently, Tenenbaum (III.4.3 of \cite{TenenbaumBook}) found that to further quantify Hal\'{a}sz' result it is convenient to introduce a measure which is closely related to the MM metric:
\begin{equation}
\label{MMmdefn}
\m(f; \, X, \, T) := \min_{|t| \leq T} \; \D\Bigl(f(n), n^{it}; X\Bigr)^2  .
\end{equation}
Essentially, this is measuring how closely $f$ can mimic a function of the form $n^{it}$.
Our main theorem will likewise be stated in terms of this quantity.

For our intended applications, we will need to control the size of the prime factors of the argument. To this end, let $\s(y)$ denote the set of $y$-smooth numbers:
\begin{equation}
\label{DefnOfSmoothNumbers}
\s(y) := \{n \geq 1 : p \leq y \text{ for every prime } p | n\} .
\end{equation}
We can now state a version of our main theorem (for a stronger but more technical statement, see Theorem \ref{HybridBoundThm}):
\begin{Theorem1}
Let $\F$, $\m$, and $\s(y)$ be defined as in (\ref{DefnOfF}), (\ref{MMmdefn}), and (\ref{DefnOfSmoothNumbers}), respectively.
Suppose that $x \geq 2$, $y \geq 16$, $\alpha \in \R$, $f \in \F$,
and that as $\psi$ ranges over all primitive Dirichlet characters of conductor less than $\log y$, $\m(f\overline{\psi}; \, y, \, \log^2 y)$ is minimized when $\psi~=~\xi$.
Then
$$
\mathop{\sum_{n \leq x}}_{n \in \s(y)}
\frac{f(n)}{n} \, e(n\alpha)
\ll
(\log y) \, e^{-\m(f\overline{\xi}; \, y, \, \log^2 y)}
+ (\log y)^{2/3 + o(1)}
$$
where the implicit constant is absolute and $o(1) \to 0$ as $y \to \infty$.
\end{Theorem1}
\vspace{-0.25in}
{\sc Remarks}:
\begin{description}

\vspace{-0.25in}
\item[({\em i})] Colloquially, the theorem asserts that there is lots of cancellation in the exponential sum unless $f(n) \approx \xi(n) \, n^{it}$ for many small $n$, where $\xi$ is some Dirichlet character of small conductor and $t$ is a small real number.
\item[({\em ii})] Formally, the bound is independent of $x$. However, note that for all $y \geq x$ the condition $n \in \s(y)$ becomes superfluous, so if this is the case we can replace all appearances of $y$ by $x$ on the right hand side of the bound.
\item[({\em iii})] As stated, the theorem is uniform in $\alpha$. See Theorem \ref{HybridBoundThm} for a quantitative version which is explicit in the dependence on $\alpha$.
\end{description}

In the second half of this paper we apply the method to the study of character sums. Given a Dirichlet character $\chimodq$, we wish to understand the behavior of the associated character sum function
$$
S_\chi(t) := \sum_{n \leq t} \chi(n) .
$$
The importance of this function is perhaps most easily seen in its intimate connection to the Dirichlet $L$-functions: partial summation on $L(s,\chi)$ leads to the following expression, valid whenever Re $s > 0$:
$$
L(s,\chi) = s \int_1^\infty \frac{1}{t^{s+1}} \, S_\chi(t) \, dt .
$$
In the reverse direction, Perron's formula shows that for any $c > 1$ and any $t \notin \Z$,
$$
S_\chi(t) = \frac{1}{2\pi i} \int_{c-i\infty}^{c+i\infty} L(s,\chi) \, x^s \, \frac{ds}{s} .
$$
The behavior of the character sum function is not well understood, but some progress has been made in studying its magnitude. The first breakthrough occurred in 1918, when P\'{o}lya and Vinogradov independently proved that for all $t$,
\begin{equation}
\label{PVInequality}
| S_\chi(t) | \ll \sqrt{q} \, \log q  .
\end{equation}
This is superior to the trivial bound $|S_\chi(t)| \leq t$ for all $t$ larger than $q^{\frac{1}{2} + \epsilon}$, and is close to being sharp; for all primitive $\chimodq$,
$$
\max_{t \leq q} |S_\chi(t)| \gg \sqrt{q} .
$$
(A slick proof of this is to apply partial summation to the Gauss sum
\begin{equation}
\label{GaussSumDef}
\tau(\chi) := \sum_{n \leq q} \chi(n) \, e\Bigl(\frac{n}{q}\Bigr)
\end{equation}
and use the classical result that for primitive $\chimodq$, $|\tau(\chi)| = \sqrt{q}$.)

The P\'{o}lya-Vinogradov inequality naturally suggests two distinct research goals:
to obtain non-trivial bounds for short character sums, and to improve (\ref{PVInequality}) for long sums. Great progress has been made in the former of these tasks by Burgess, although the current state of knowledge still falls far short of the bound ${|S_\chi(t)| \ll_\epsilon q^\epsilon \sqrt{t}}$ implied by the GRH. The other path, that of sharpening the P\'{o}lya-Vinogradov inequality for long sums, saw little progress until the work of Montgomery and Vaughan \cite{MVExpSumsWMultCoeff}, who proved on the assumption of the GRH that
\begin{equation}
\label{MVInequality}
| S_\chi(t) | \ll \sqrt{q} \, \log \log q  .
\end{equation}
Given the strength of the hypothesis this improvement may seem a bit precious, but in fact it is a best-possible result: in 1932, Paley \cite{Paley} constructed an infinite class of quadratic characters $\{\chi_n \, (\modulo q_n)\}$ for which $$
\max_{t \leq q} |S_{\chi_{{}_n}}(t)| \gg \sqrt{q_n} \, \log \log q_n  .
$$
Unconditionally, however, there were no asymptotic improvements\footnote{There were several improvements of the implicit constant, however. Of particular note is Hildebrand's interesting work \cite{Hildebrand}, wherein he puts forward the idea that $S_\chi(t)$ can only have large magnitude if $\chi$ mimics closely the behavior of a character of very small conductor. It is the development of this idea which led to the work of Granville and Soundararajan, and subsequently to the present paper.} of the P\'{o}lya-Vinogradov inequality for long sums until the recent breakthroughs of Granville and Soundararajan \cite{GSPretentiousCharactersAndPV}. Among other results, they demonstrated that for primitive characters $\chimodq$ of odd order one can unconditionally improve the P\'{o}lya-Vinogradov bound by a power of $\log q$ and, conditionally on the GRH, the Montgomery-Vaughan estimate by a power of $\log \log q$. The following theorem, which will be an immediate consequence of Theorems \ref{SharpThm21} and \ref{GeneralizeLemma32}, improves both Granville-Soundararajan's conditional and unconditional bounds (see the remarks immediately following the theorem).
\begin{ImprovedGS}
%\label{ImprovedGS}
For every primitive Dirichlet character $\chimodq$ of odd order $g$,
$$
|S_\chi(t)| \ll_g \sqrt{q} \, (\log Q)^{1 - \delta_g + o(1)}
$$
where $\delta_g := 1 - \frac{g}{\pi} \sin \frac{\pi}{g}$ and
$$
Q =
\begin{cases}
q & \quad \text{unconditionally} \\
\log q & \quad \text{conditionally on the GRH.}
\end{cases}
$$
The implicit constant depends only on $g$, and $o(1) \to 0$ as $q \to \infty$.
\end{ImprovedGS}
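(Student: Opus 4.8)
The plan is to combine two results established later in the paper: Theorem~\ref{SharpThm21}, a sharpened form of the Granville--Soundararajan reduction of a character sum to an exponential sum, and Theorem~\ref{GeneralizeLemma32}, a sharp lower bound for how poorly a character of odd order can mimic a character of small conductor. Write $\delta=\delta_g$; we may assume $g\geq 3$, since a primitive character of order $1$ is trivial. The only elementary fact about $\delta_g$ we shall need is that $\delta_g<\tfrac13$ for every odd $g\geq 3$: since $x\mapsto(\sin x)/x$ is decreasing on $(0,\pi]$, the quantity $\tfrac g\pi\sin\tfrac\pi g$ increases with $g$, so $\delta_g\leq\delta_3=1-\tfrac{3\sqrt 3}{2\pi}<0.18$.

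First I would pass from $S_\chi(t)$ to a smooth exponential sum. From the P\'{o}lya Fourier expansion of $S_\chi(t)$ and the classical identity $|\tau(\chi)|=\sqrt q$, one expresses $S_\chi(t)$, up to an acceptable error, as $\tau(\chi)$ times a short linear combination of sums $\sum_{n\leq x}\tfrac{\overline\chi(n)}{n}e(n\alpha)$; under the GRH one may moreover use the ensuing control of $\log L(s,\chi)$ in a zero-free region to discard all but the $\log q$-smooth part of each such sum (it is this that replaces $q$ by $\log q$ in the final bound), whereas unconditionally no smoothing is available. Feeding these sums into Theorem~\ref{HybridBoundThm} (the technical form of Theorem~1) with $f=\overline\chi$ and a smoothness parameter $y$ for which $\log y\asymp\log Q$, one obtains
\[
|S_\chi(t)|\;\ll_g\;\sqrt q\,(\log Q)\,e^{-\m(\chi\overline\xi;\,Q,\,\log^2 Q)}\;+\;\sqrt q\,(\log Q)^{2/3+o(1)},
\]
where $\xi$ is the primitive character of conductor below $\log Q$ minimizing $\m(\chi\overline\psi;\,Q,\,\log^2 Q)$ (the replacement of $\overline\chi$ by $\chi$ here costing nothing, as $\D$ is conjugation-invariant and the family of admissible $\psi$ is closed under conjugation). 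This is Theorem~\ref{SharpThm21}; it refines the main reduction theorem of \cite{GSPretentiousCharactersAndPV}, with Theorem~\ref{HybridBoundThm} in place of the weaker input used there.

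Next, one must bound $\m(\chi\overline\xi;\,Q,\,\log^2 Q)$ from below, and this is the heart of the matter. Theorem~\ref{GeneralizeLemma32} asserts that for $\chi$ of odd order $g$ (and $q$ large),
\[
\m(\chi\overline\psi;\,y,\,\log^2 y)\;\geq\;\bigl(\delta_g+o(1)\bigr)\log\log y
\]
uniformly over primitive $\psi$ of conductor below $\log y$. Here is the heuristic, which also explains the constant. Since $\m(\chi\overline\psi;\,y,\,\log^2 y)=\min_{|t|\leq\log^2 y}\D(\chi,\psi\,n^{it};\,y)^2$ and $\chi$ has order $g$, each $\chi(p)$ is a $g$-th root of unity, so $\mathrm{Re}\,\chi(p)\overline w\leq\max_{\zeta\in\mu_g}\mathrm{Re}\,\zeta\overline w$ for every unimodular $w$; writing $\Phi(w):=1-\max_{\zeta\in\mu_g}\mathrm{Re}\,\zeta\overline w$, a short computation --- the average of $1-\cos$ over the length-$1/g$ arc centred at a $g$-th root of unity --- gives $\int_0^1\Phi(e(\theta))\,d\theta=1-\tfrac g\pi\sin\tfrac\pi g=\delta_g$. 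It therefore suffices to know that the points $\psi(p)\,p^{it}$ ($p\leq y$) equidistribute on the unit circle with the weights $1/p$: approximating $\Phi$ from below by trigonometric polynomials then gives $\sum_{p\leq y}\tfrac{1-\mathrm{Re}\,\chi(p)\overline{\psi(p)}\,p^{-it}}{p}\geq\sum_{p\leq y}\tfrac{\Phi(\psi(p)p^{it})}{p}\geq(\delta_g-o(1))\log\log y$. The equidistribution reduces to the bound $\sum_{p\leq y}\tfrac{\psi^k(p)\,p^{ikt}}{p}=o(\log\log y)$ for each fixed $k\geq 1$, which, when $\psi^k$ is non-principal, is a prime-number-theorem estimate for $L(s,\psi^k)$ flowing from the classical zero-free region, uniform over conductors below $\log y$ and $|t|\leq\log^2 y$ (Siegel zeros of quadratic $\psi$ doing no harm here, $\mathrm{cond}\,\psi$ being negligible beside $y$). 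When instead $\psi^k$ is principal for some small $k$ with $|t|$ small --- so that $\psi(p)p^{it}$ concentrates near $\mu_g$ and $\Phi(\psi(p)p^{it})$ degenerates --- a separate and more delicate argument is needed, exploiting the arithmetic structure of the non-principal character $\chi\overline\psi$ (whose order is bounded in terms of $g$) and, in the favourable sub-cases, the fact that $\psi(p)\notin\mu_g$ for a positive proportion of primes (for instance $\psi(p)=-1$ when $\psi$ is quadratic, since $g$ is odd).

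Combining the two steps gives $e^{-\m(\chi\overline\xi;\,Q,\,\log^2 Q)}\leq(\log Q)^{-\delta_g+o(1)}$, whence
\[
|S_\chi(t)|\;\ll_g\;\sqrt q\,(\log Q)^{1-\delta_g+o(1)}+\sqrt q\,(\log Q)^{2/3+o(1)}\;\ll_g\;\sqrt q\,(\log Q)^{1-\delta_g+o(1)}
\]
since $1-\delta_g>\tfrac23$; this is the assertion, with $Q=q$ for the unconditional reduction and $Q=\log q$ for the GRH-based one. The reduction and the comparison of exponents are routine --- the former following \cite{GSPretentiousCharactersAndPV} once Theorem~\ref{HybridBoundThm} is available --- so the real work, and the only place where a genuinely new idea over \cite{GSPretentiousCharactersAndPV} is required, is the mimicry bound of Theorem~\ref{GeneralizeLemma32}. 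The difficulty there is to extract the \emph{sharp} constant $\delta_g$, rather than the much smaller constant (of order $1/g^2$) that the bare triangle inequality $\D(\chi,\psi\,n^{it};\,y)\geq\tfrac1g\D(\chi^g,\psi^g n^{igt};\,y)=\tfrac1g\D(\chi_0,\psi^g n^{igt};\,y)$ would supply; this is what forces the equidistribution argument, whose delicate points are the uniformity in $\psi$ and $t$, the degenerate small-order/small-$t$ case above, and --- hardest of all --- carrying this out unconditionally, where the absence of smoothing forces one to control the relevant Weyl sums all the way out to $y\asymp\sqrt q$.
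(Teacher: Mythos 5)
Your proposal follows the paper's own route exactly: Theorem 2 is deduced by inserting the lower bound of Theorem \ref{GeneralizeLemma32} into the estimate of Theorem \ref{SharpThm21} and observing that $1-\delta_g>\tfrac23$, and your sketches of the two ingredients (the P\'olya expansion plus major/minor arc analysis, and the short-interval equidistribution argument via Siegel--Walfisz for the non-mimicry bound) match the paper's Sections 6 and 7. One small imprecision worth noting: your displayed intermediate bound drops the factor $\bigl(1-\chi(-1)\xi(-1)\bigr)$ appearing in Theorem \ref{SharpThm21}, which is actually needed because Theorem \ref{GeneralizeLemma32} bounds $\m(\chi\overline{\xi};\,y,\,\log^2 y)$ from below only for \emph{odd} $\xi$ --- a character of odd order can mimic an even character (e.g.\ one of order dividing $g$) arbitrarily well, so it is the parity factor, not a uniform mimicry bound over all primitive $\psi$ of small conductor, that kills the main term in that case.
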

\vspace{-0.25in}
{\sc Remarks}:
\begin{description}

\vspace{-0.25in}
\item[({\em i})] Our conditional estimate was conjectured by Granville and Soundararajan in \cite{GSPretentiousCharactersAndPV}.

\item[({\em ii})] $\delta_3 \approx 0.173$, so $1 - \delta_3$ is slightly smaller than $5/6$.

\item[({\em iii})] Theorem 2 saves a factor of $(\log Q)^{\delta_g / 2}$ over the Granville-Soundararajan bounds (see Theorems 1 and 4 of \cite{GSPretentiousCharactersAndPV}).

\item[({\em iv})] The only step in our argument requiring the GRH is Proposition \ref{GRHprop} below.
\end{description}

Finally, we show that the conditional estimate in Theorem 2 is best-possible:
\begin{Theorem3}
Assume the GRH. Then for any odd integer $g \geq 3$, there exists an infinite family of characters $\chimodq$ of order $g$ such that
\[
\max_{t \leq q}| S_\chi(t) | \gg_{\epsilon,g} \sqrt{q} (\log \log q)^{1 - \delta_g - \epsilon}
\]
\end{Theorem3}

In the following section, we state precise versions of our results and outline the arguments which go into proving them.

{\em Acknowledgements:} This work grew out of my Ph.D. thesis, and I am very grateful to my advisor, Soundararajan; it was at his suggestion that I began exploring this interesting subject, and over the past five years he has been extremely generous with his time and support. It is also a pleasure to thank Denis Trotabas and Ilya Baran for helpful discussions, Jeff Lagarias for meticulously reading and commenting on several drafts of this paper, and John Friedlander for his encouragement and for making numerous improvements to the exposition. I would also like to thank the anonymous referees for their thorough reading and insightful comments.

\section{Precise statements of results and sketches of their proofs}
\label{SectDetails}

It has long been understood that cancellation in exponential sums with arithmetic coefficients is closely related to diophantine properties of $\alpha$. To state this more precisely, recall Dirichlet's theorem on diophantine approximation: given any $M \geq 2$ there exists a rational number $\frac{b}{r}$ such that
\begin{equation}
\label{EqDiophantineApproxSect2}
1 \leq r \leq M, \:\: (b,r) = 1,\: \text{ and } \:
\left| \alpha - \frac{b}{r} \right| \leq \frac{1}{rM}  .
\end{equation}
In \cite{MVExpSumsWMultCoeff}, Montgomery and Vaughan showed that there is cancellation in the exponential sum (\ref{PrototypicalExpSumWMultCoeffs}) for $\alpha$ belonging to a `minor arc', i.e. for those $\alpha$ admitting a diophantine approximation by a rational number with large denominator. Our main result complements this by showing that there is substantial cancellation in the sum (\ref{PrototypicalExpSumWMultCoeffs}) even for those $\alpha$ not admitting such a rational approximation, unless both $f(n)$ and $\alpha$ are rather special: $f(n)$ must mimic a function of the form $\xi(n) n^{it}$ for some primitive Dirichlet character $\xi \, (\modulo m)$, and the denominator $r$ of the diophantine approximation for $\alpha$ given by (\ref{EqDiophantineApproxSect2}) must be a multiple of the `exceptional modulus' $m$. Formally:
\begin{theorem}
\label{HybridBoundThm}
Let $\F$, $\m$, and $\s(y)$ be defined as in (\ref{DefnOfF}), (\ref{MMmdefn}), and (\ref{DefnOfSmoothNumbers}), respectively.
Suppose that $x \geq 2$, $y \geq 16$, $\alpha \in \R$, $f \in \F$,
and that as $\psi$ ranges over all primitive Dirichlet characters of conductor less than $\log y$, $\m(f\overline{\psi}; \, y, \, \log^2 y)$ is minimized when $\psi~=~\xi \, (\modulo m)$.
Set $M = \exp\biggl(\exp\Bigl(\frac{\log \log y}{\log \log \log y}\Bigr)\biggr)$.

(I) If there exists $\frac{b}{r}$ satisfying (\ref{EqDiophantineApproxSect2}) with $r > \log y$, then $$
\mathop{\sum_{n \leq x}}_{n \in \s(y)}
\frac{f(n)}{n} \, e(n\alpha)
\ll
(\log y)^{1/2 + o(1)} .
$$
(II) If there exists a rational number of the form $\frac{b}{r}$ such that (\ref{EqDiophantineApproxSect2}) holds with $r \leq \log y$ and $m \nmid r$, then
$$
\mathop{\sum_{n \leq x}}_{n \in \s(y)}
\frac{f(n)}{n} \, e(n\alpha)
\ll
(\log y)^{2/3 + o(1)}  .
$$
(III) If no rational numbers satisfy the hypotheses of (I) or (II), then
$$
\mathop{\sum_{n \leq x}}_{n \in \s(y)}
\frac{f(n)}{n} \, e(n\alpha)
\ll
\frac{\sqrt{m}}{\varphi(m)} \, (\log y) \, e^{-\m(f\overline{\xi}; \, y, \, \log^2 y)}
+ \frac{1}{\sqrt{r}} (\log y)^{2/3 + o(1)} + (\log y)^{1/2 + o(1)}  .
$$
All implicit constants are absolute, and $o(1) \to 0$ as $y \to \infty$.
\end{theorem}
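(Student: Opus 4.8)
The plan is to reduce the exponential sum to a combination of Dirichlet polynomials and then invoke Halász-type estimates for each of them. The first move is a standard decomposition: write $\alpha = \tfrac{b}{r} + \beta$ with $|\beta| \leq \tfrac{1}{rM}$ as in (\ref{EqDiophantineApproxSect2}), and expand $e(n\alpha) = e(nb/r)\,e(n\beta)$. Since $e(nb/r)$ is periodic mod $r$, its values are captured by additive characters mod $r$, which in turn decompose into Dirichlet characters mod $d$ for $d \mid r$ via Gauss sums: roughly, $e(nb/r) = \sum_{d \mid r}\sum_{\chi \,(\mathrm{mod}\, d)} c_\chi \chi(n)$ for suitable coefficients $c_\chi$ with $|c_\chi| \leq 1/\sqrt{d}$ (when $(n,r)=1$; the contribution of $n$ sharing a factor with $r$ is handled separately and is small after the $1/n$ weighting, since it is confined to a sparse set of smooth integers). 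The smooth factor $e(n\beta)$ is dealt with by partial summation in $n$, which is cheap because $|\beta|$ is tiny relative to the range; this is where the dependence on $r$ (through $|\beta| \leq 1/(rM)$) enters and ultimately produces the $1/\sqrt r$ savings in part (III).

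After this decomposition the problem becomes: bound $\sum_{n \leq x, n \in \s(y)} \tfrac{f(n)\chi(n)}{n} n^{-i\beta'}$ (or rather, after partial summation, the partial sums of $f\chi$ restricted to $y$-smooth numbers) for each Dirichlet character $\chi$ of conductor $d \mid r$. This is exactly the kind of mean-value sum controlled by a Halász/Montgomery–Tenenbaum estimate phrased through $\m(f\chi; y, \log^2 y)$: the sum is $\ll (\log y)\,e^{-\m(f\chi;\,y,\,\log^2 y)} + (\log y)^{2/3 + o(1)}$, with the secondary term coming from the optimization over $|t| \leq \log^2 y$ and the usual error in Halász's theorem. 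I would isolate the relevant Halász-type input as a lemma (presumably one of the "Lemma 3/4/5" or "Theorem \ref{GeneralizeLemma32}" statements the paper has reserved) and apply it uniformly. The key analytic point is the behavior of $\m(f\chi;\cdot)$ as $\chi$ varies: by the triangle inequality for $\D$, if $\chi$ has conductor $d$ then $\m(f\chi; y, \log^2 y)$ is comparable to $\m(f\bar\xi; y, \log^2 y)$ shifted by $\D(\bar\chi, \xi; y)^2$, and unless $\chi$ essentially agrees with $\xi$ this shift is $\gg \log\log y$ large — enough to kill that term entirely. Hence only the characters $\chi$ that "contain" $\xi$ survive with a non-negligible main term, and these force $m \mid d \mid r$, which is precisely the dichotomy organizing the three cases.

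Concretely: in case (I), $r > \log y$ means no single character of conductor $\leq \log y$ can be responsible, and summing $|c_\chi| \leq 1/\sqrt d$ over all $d \mid r$ together with the trivial bound $(\log y)^{1/2+o(1)}$ for each sum (or a slightly more careful large-sieve-type averaging over characters) gives the claimed $(\log y)^{1/2+o(1)}$. In case (II), every $d \mid r$ fails to be a multiple of $m$, so $\xi$ is never "inside" any available $\chi$; every surviving piece is then only of size $(\log y)^{2/3+o(1)}$, and there are $O(d(r)) = (\log y)^{o(1)}$ of them, yielding $(\log y)^{2/3+o(1)}$. In case (III) the one character $\chi = \bar\xi$ (up to principal-character twist, living at modulus $m \mid r$) contributes the main term $\tfrac{\sqrt m}{\varphi(m)}(\log y)e^{-\m(f\bar\xi;\,y,\,\log^2 y)}$ — the factor $\sqrt m/\varphi(m)$ being exactly the Gauss-sum normalization $|c_{\bar\xi}|$ times the count of primitive-to-imprimitive lifts — while all other characters contribute either the $\tfrac{1}{\sqrt r}(\log y)^{2/3+o(1)}$ term (those of small conductor, via the $|\beta|$-savings and the large-$\m$ suppression) or the $(\log y)^{1/2+o(1)}$ error (the genuinely minor-arc leftovers, e.g. the non-coprime part of $n$ and the tail of the partial summation).

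The main obstacle, and the place where real work is needed beyond bookkeeping, is getting the suppression of the "wrong" characters to be uniform and quantitatively strong enough — i.e. establishing that $\m(f\chi; y, \log^2 y) \gg \log\log y$ whenever $\chi$ does not align with $\xi$, so that $e^{-\m(f\chi)}$ beats $(\log y)^{-1}$. This rests on a careful use of the pseudo-triangle inequality for $\D$ combined with a lower bound for $\D(\bar\chi,\xi;y)$ for distinct characters of bounded conductor (a prime-counting / Mertens-type estimate showing $\D(\chi_1,\chi_2;y)^2 \gg \log\log y$ when $\chi_1 \neq \chi_2$ have conductor $< \log y$), and it must be combined cleanly with the range $|t| \leq \log^2 y$ in the definition of $\m$ so that the optimal $t$ for $f\bar\xi$ is still admissible (or nearly so) for $f\chi$. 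Handling the parameter $M = \exp(\exp(\log\log y/\log\log\log y))$ correctly in the partial-summation step — ensuring the $e(n\beta)$ twist genuinely costs nothing on the smooth range — is the other delicate bit, but it is essentially the Montgomery–Vaughan minor-arc technology applied verbatim.
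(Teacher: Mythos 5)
Your treatment of the major arcs (cases (II) and (III)) follows the paper's architecture: the expansion of $e(nb/r)$ via Gauss sums into Dirichlet characters is the Granville--Soundararajan identity (Proposition \ref{GSID}), the Hal\'asz-type input is Corollary \ref{HalMVTenCor} (not Theorem \ref{GeneralizeLemma32}, which is the odd-order non-mimicry bound), and the suppression of non-exceptional characters is exactly the Balog--Granville--Soundararajan repulsion bounds $\m(f\overline{\psi_k};\,y,\,\log^2 y)\geq(\tfrac{1}{3}+o(1))\log\log y$ for $k\geq 2$. One correction of attribution: the factor $\tfrac{1}{\sqrt r}$ in (III) does not come from the $e(n\beta)$ twist and the bound $|\beta|\leq 1/(rM)$; it comes from the Gauss-sum weights $\tau(\psi)/\varphi(r/d)$ in the identity combined with the repulsion bound, after summing over $d\mid r$.

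The genuine gap is case (I). For $\log y<r\leq M$ your character decomposition does not close: each inner sum $\sum_{n\leq x,\,n\in\s(y)}f(n)\overline{\psi}(n)/n$ has trivial bound $\log y$, not $(\log y)^{1/2+o(1)}$, and even with Hal\'asz plus repulsion the best available bound per character is $(\log y)^{2/3+o(1)}$; multiplying by the total Gauss-sum weight $\sum_{d\mid r}d^{-1}\sqrt{r/d}\asymp\sqrt{r}$ gives an estimate that grows with $r$, useless for $r$ up to $M=\exp\bigl(\exp\bigl(\frac{\log\log y}{\log\log\log y}\bigr)\bigr)$. No large-sieve-type averaging of these harmonic character sums rescues this. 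What is actually needed is the Montgomery--Vaughan theorem on $\sum f(n)e(n\alpha)$ (Theorem \ref{MV1977GSFormulation}, packaged as Corollary \ref{MinorArcsCorollary}), a genuinely bilinear estimate giving $\log r+(\log r)^{5/2}r^{-1/2}\log y+\log\log y\ll(\log y)^{1/2+o(1)}$ once $r>\log y$. The same ingredient is hidden in the step you call cheap partial summation: since $x$ is not bounded in terms of $y$, replacing $e(n\alpha)$ by $e(nb/r)$ over the whole range fails ($n|\beta|$ is unbounded); the paper truncates at $N=\min\{x,|r\alpha-b|^{-1}\}$ and controls the tail $N<n\leq x$ by observing that there $\alpha$ admits a Diophantine approximation with denominator $\asymp N$, i.e.\ another application of Montgomery--Vaughan (Lemma \ref{alphatoratl}). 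You gesture at this at the very end, but as written the minor-arc half of the theorem is not proved.
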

\vspace{-0.25in}
{\sc Remarks}:
\begin{description}

\vspace{-0.25in}
\item[({\em i})] We expect that the twist by $n^{it}$ is superfluous. In other words, taking $\xi \, (\modulo m)$ to be the nearest primitive Dirichlet character to $f(n)$ with respect to the MM metric, the above theorem should hold with $\m(f\overline{\psi}; \, y, \, \log^2 y)$ replaced throughout by $\D(f,\, \psi;\, y)^2$. See Conjecture \ref{conjecturenotwist} and the discussion preceding it for a justification of this belief.

\item[({\em ii})] The methods used to prove Theorem \ref{HybridBoundThm} can be applied to obtain an analogous theorem for sums of the form $\sum f(n) e(n\alpha)$ with $f \in \F$. In this case, in contrast with the previous remark, the twist by $n^{it}$ will be necessary. See the discussion preceding Conjecture \ref{conjecturenotwist}.

\item[({\em iii})] With more work, it should be possible to adapt the argument to prove a similar result under the weaker hypothesis that $f(n)$ is multiplicative (as opposed to completely multiplicative). The hypothesis that $|f(n)| \leq 1$ for all $n$ is much more delicate, however. Proving an analogous result for $f(n)$ whose magnitude grows (however slowly) to infinity would find wide applications, but the methods described here seem insufficient to attack this problem.

\item[({\em iv})] Theorem \ref{HybridBoundThm} immediately implies Theorem 1.
\end{description}

We split the proof into several steps.

\underline{{\sc Step 1: Handling the minor arcs}} \newline
In 1977, Montgomery and Vaughan made an important breakthrough in the study of character sums by proving the upper bound (\ref{MVInequality}) on the assumption of the Generalized Riemann Hypothesis (see \cite{MVExpSumsWMultCoeff}). Most of their paper is devoted to (unconditionally) obtaining cancellation in sums of the form $\sum f(n) e(n\alpha)$ with $f$ multiplicative and $\alpha$ admitting a rational diophantine approximation with large denominator. To accomplish this, they first reduce the problem to studying certain bilinear forms, then develop an intricate iterated version of Dirichlet's hyperbola method to estimate this form. For our purposes, we require a variant of their bound: first, we are interested in sums of the form $\sum \frac{f(n)}{n} e(n\alpha)$, and second, we will need to control the smoothness of the argument. In Section \ref{SectMinorArcs}
we deduce the following from Montgomery and Vaughan's theorem:
\begin{corollary}
\label{MinorArcsCorollary}
Given $f\in\F$, $\alpha \in \R$, and a reduced fraction $\frac{b}{r}$ such that $r \geq 2$ and ${\displaystyle \left|\alpha - \frac{b}{r}\right| \leq \frac{1}{r^2}}$. Then for ${x \geq 2}$ and ${y \geq 16}$,
$$
\mathop{\sum_{n \leq x}}_{n \in \s(y)}
\frac{f(n)}{n} e(n\alpha) \ll
\log r + \frac{(\log r)^{5/2}}{\sqrt{r}} \log y + \log \log y
$$
where the implicit constant is absolute.
\end{corollary}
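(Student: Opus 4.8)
The plan is to reduce the weighted smooth sum $\sum_{n\le x,\,n\in\s(y)} \frac{f(n)}{n}e(n\alpha)$ to a combination of unweighted sums $\sum_{n\le u,\,n\in\s(y)} f(n)e(n\alpha)$ via partial summation, and then to bound those unweighted sums by the Montgomery--Vaughan theorem for multiplicative coefficients on minor arcs. The first point to note is that the indicator of $\s(y)$ is itself a multiplicative function, so $g(n) := f(n)\mathbf{1}_{n\in\s(y)}$ lies in $\F$; hence Montgomery--Vaughan's bound applies directly to $\sum_{n\le u} g(n)e(n\alpha)$, with the same diophantine approximation $|\alpha - b/r|\le 1/r^2$, yielding something of the shape $\sum_{n\le u} g(n)e(n\alpha) \ll \frac{u}{\sqrt{r}}(\log r)^{5/2} + \sqrt{ur}(\log r)^{1/2}$ (or whatever the precise statement of their bound gives; the key features are a main term with a $1/\sqrt r$ saving and a secondary term growing like $\sqrt{ur}$).

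Next I would apply partial summation. Writing $T(u) = \sum_{n\le u} g(n)e(n\alpha)$, we have
\[
\mathop{\sum_{n \leq x}}_{n \in \s(y)} \frac{f(n)}{n}e(n\alpha)
= \frac{T(x)}{x} + \int_1^x \frac{T(u)}{u^2}\,du .
\]
The boundary term $T(x)/x$ is harmless. For the integral, I split the range at $u = r$: on $1\le u\le r$ the trivial bound $|T(u)|\le u$ contributes $\ll \log r$ to $\int \frac{|T(u)|}{u^2}du$; on $r\le u\le x$ I insert the Montgomery--Vaughan bound. The main term there contributes $\int_r^x \frac{(\log r)^{5/2}}{\sqrt r}\frac{du}{u} \ll \frac{(\log r)^{5/2}}{\sqrt r}\log x$, and here I would use the smoothness constraint to replace $\log x$ by $\log y$: since $g$ is supported on $\s(y)$, the partial sums $T(u)$ stop changing once $u$ exceeds the largest $y$-smooth number of interest, but more cleanly one truncates the integral at $u = y^{O(\log y)}$ or simply observes that by Rankin's trick $\sum_{n>x,\,n\in\s(y)}\frac{1}{n}$ is small, so effectively the tail $u>\exp((\log y)^2)$, say, contributes only $O(1)$; this is exactly where the $\log\log y$ term is generated. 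The secondary Montgomery--Vaughan term contributes $\int_r^{\cdots}\frac{\sqrt{ur}}{u^2}du \ll \frac{\sqrt r}{\sqrt r} = 1$, i.e. $\ll \log r$ after being slightly generous, so it is absorbed.

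The main obstacle, and the step requiring the most care, is extracting the smoothness savings: Montgomery--Vaughan's theorem as stated controls $\sum_{n\le u} g(n)e(n\alpha)$ but the natural ``length'' of the sum after partial summation is $\log x$, and without using $\s(y)$ one would get $\log x$ rather than $\log y$ in the final bound. The fix is to observe that, because $g$ is supported on $y$-smooth numbers and $|g(n)|\le 1$, the contribution to $\sum \frac{g(n)}{n}e(n\alpha)$ from $n > z$ is at most $\sum_{n>z,\,n\in\s(y)}\frac1n$, which by Rankin's trick (choosing the exponent $1-\tfrac{1}{\log y}$) is $\ll \exp\bigl(-\tfrac{\log z}{\log y}\bigr)\log y$; taking $z = \exp\bigl((\log y)(\log\log y)\bigr)$ makes this $O(1)$. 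So it suffices to estimate the sum up to $z$, and then every appearance of $\log x$ above becomes $\log z \asymp (\log y)(\log\log y)$ --- and a closer accounting shows the extra $\log\log y$ is only additive, giving the stated $+\log\log y$. Once this truncation is in place the rest is bookkeeping: collect the $\log r$ from the short range, the $\frac{(\log r)^{5/2}}{\sqrt r}\log y$ from the main term, and the $\log\log y$ from the smooth tail.
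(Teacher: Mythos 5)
Your overall strategy is the same as the paper's: restrict $f$ to $y$-smooth numbers (noting the restriction stays in $\F$), invoke Montgomery--Vaughan on the bulk of the range, and kill the tail with Rankin's trick. But there is a genuine gap in the final bookkeeping step, precisely at the point you flag as requiring the most care. You truncate the sum at $z = \exp\bigl((\log y)(\log\log y)\bigr)$, so that the Rankin tail is $O(1)$, and then the Montgomery--Vaughan main term over $r^2 \le u \le z$ contributes
\[
\frac{(\log r)^{3/2}}{\sqrt{r}} \log z \asymp \frac{(\log r)^{3/2}}{\sqrt{r}} (\log y)(\log\log y).
\]
This loses a \emph{multiplicative} factor of $\log\log y$ against the claimed bound: for bounded $r$ (say $r=2$) the target is $\asymp \log y$, while your estimate is $\asymp (\log y)(\log\log y)$. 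Your assertion that ``a closer accounting shows the extra $\log\log y$ is only additive'' is not justified and is in fact false with this choice of $z$; there is no way to trade a multiplicative $\log\log y$ on the main term for an additive one when $r$ is small.

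The correct truncation point is $z = y^{\log r}$, which is what the paper uses. Then the Rankin tail (with exponent $1 - \tfrac{1}{\log y}$, exactly as you set it up) is $\ll e^{-\log r}\log y = \tfrac{\log y}{r}$, which is comfortably absorbed into $\tfrac{(\log r)^{5/2}}{\sqrt r}\log y$ for all $r \ge 2$; and on the main range $\log z = (\log r)(\log y)$, so the Montgomery--Vaughan term $\tfrac{(\log r)^{3/2}}{\sqrt{r}}\log z$ becomes exactly $\tfrac{(\log r)^{5/2}}{\sqrt{r}}\log y$. This also corrects a misattribution in your write-up: the exponent $5/2$ does not come from Montgomery--Vaughan's theorem (which gives $(\log r)^{3/2}$); it is manufactured by the truncation at $y^{\log r}$. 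Likewise the additive $\log\log y$ in the statement comes from the $N/\log N$ term in Montgomery--Vaughan (giving $\int du/(u\log u) = \log\log u$ after partial summation), not from the smooth tail, which contributes only $O(\log y / r)$. With the truncation point corrected, the rest of your plan goes through and coincides with the paper's proof (the paper simply quotes a version of Montgomery--Vaughan in which the partial summation has already been performed).
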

\vspace{-0.25in}
It is evident that this bound is particularly effective for those $\alpha$ which have a rational Diophantine approximation with large denominator. In the language of the circle method, such $\alpha$ constitute the {\em minor arcs}; all other $\alpha$ (i.e. all of whose rational Diophantine approximations have small denominator) comprise the {\em major arcs}. Thus, Corollary \ref{MinorArcsCorollary} handles the minor arcs, and it remains to tackle those $\alpha$ belonging to a major arc. A method to do this in the case that $f$ is a character was developed by Granville and Soundararajan in \cite{GSPretentiousCharactersAndPV}. In addition to generalizing and streamlining their argument somewhat, we introduce a new ingredient: the work of Hal\'{a}sz, Montgomery, and Tenenbaum on mean values of multiplicative functions. We describe how this is done in the next three steps of our outline.

\underline{{\sc Step 2: The Granville-Soundararajan identity}} \newline
In Section \ref{SectGSIDandReductionToRationals} we prove Lemma \ref{alphatoratl}, which will allow us to replace $\alpha$ by a rational Diophantine approximation in the exponential sum at the cost of possibly shortening the range of summation slightly and adding a negligible error. More precisely, under a weak technical hypothesis (easily satisfied in our situation), it will assert the existence of an $N \leq x$ such that
$$
\mathop{\sum_{n \leq x}}_{n \in \s(y)} \frac{f(n)}{n} \, e(n\alpha) =
\mathop{\sum_{n \leq N}}_{n \in \s(y)} \frac{f(n)}{n} \, e\left(\frac{b}{r} \, n\right) +
O(\log \log y) .
$$
It is worth noting that while our choice of $N$ will be dependent on $\alpha$, the implicit constant in the error term will be absolute.

This step allows us to focus on the case of rational $\alpha$. An identity which is implicit in the work of Granville and Soundararajan (see section 6.2 of \cite{GSPretentiousCharactersAndPV}) gets right to the heart of the matter:
\begin{prop}
[Granville-Soundararajan Identity\footnote{Similar identities appear in work of Hildebrand \cite{Hildebrand} and Montgomery and Vaughan \cite{MVExpSumsWMultCoeff}.}]
\label{GSID}
Given integers $b$ and $r$ such that $(b,r)~=~1$ with $b \neq 0$ and $r \geq 1$. Then for all $f \in \F$, $N \geq 2$, and $y \geq 2$, we have
$$
\mathop{\sum_{n \leq N}}_{n \in \s(y)}
\frac{f(n)}{n} \, e\Bigl(\frac{b}{r} \, n\Bigr)
=
\mathop{\sum_{d|r}}_{d \in \s(y)}
\frac{f(d)}{d} \cdot \frac{1}{\varphi\left(r/d\right)}
\sum_{\psi \, \left(\modulo \frac{r}{d}\right)}
\tau(\psi) \, \overline{\psi}(b)
\left(\mathop{\sum_{n \leq N/d}}_{n \in \s(y)}
\frac{f(n)\overline{\psi}(n)}{n}\right)  .
$$
\end{prop}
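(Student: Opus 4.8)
The plan is to reduce everything to a single classical fact: the finite Fourier expansion of an additive character in terms of Dirichlet characters. Specifically, for any modulus $q \geq 1$ and any integer $a$ with $(a,q) = 1$, I would first establish
\[
e\Bigl(\frac{a}{q}\Bigr) = \frac{1}{\varphi(q)} \sum_{\psi \, (\modulo q)} \tau(\psi)\,\overline{\psi}(a) .
\]
This follows by expanding $\tau(\psi) = \sum_{c \, (\modulo q)} \psi(c)\, e(c/q)$, substituting, and interchanging the two finite sums to obtain $\sum_{c \, (\modulo q)} e(c/q) \sum_{\psi \, (\modulo q)} \psi(c)\,\overline{\psi}(a)$; orthogonality of the Dirichlet characters modulo $q$ (here using that $a$ is invertible mod $q$) collapses the inner sum to $\varphi(q)$ when $c \equiv a \pmod q$ and $0$ otherwise, leaving the single term $\varphi(q)\, e(a/q)$.

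Next I would insert this expansion into the left-hand side after first splitting the range of $n$ according to $d := (n,r)$. Writing $n = dm$, one has $(m, r/d) = 1$, and since $(b,r) = 1$ also $(bm, r/d) = 1$, so applying the identity above with $q = r/d$ and $a = bm$ gives
\[
e\Bigl(\frac{b}{r}\,n\Bigr) = \frac{1}{\varphi(r/d)} \sum_{\psi \, (\modulo r/d)} \tau(\psi)\,\overline{\psi}(b)\,\overline{\psi}(m) .
\]
Complete multiplicativity yields $f(n) = f(d)f(m)$, and because $\s(y)$ is closed under taking divisors and products we have $n = dm \in \s(y)$ if and only if $d \in \s(y)$ and $m \in \s(y)$. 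Grouping the terms with a fixed $d \mid r$, $d \in \s(y)$, the surviving sum runs over $m \leq N/d$ with $m \in \s(y)$ and $(m, r/d) = 1$, weighted by $f(m)\overline{\psi}(m)/m$.

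Finally, I would interchange the (finite) sum over $\psi \, (\modulo r/d)$ with the sum over $m$, and then observe that $\overline{\psi}(m) = 0$ whenever $(m, r/d) > 1$, so the coprimality constraint on $m$ may be dropped without affecting the value. Renaming $m$ as $n$ gives precisely the stated identity. The only genuinely delicate point is the bookkeeping around $d = (n,r)$: one must check that $n \mapsto (d,m)$ is a bijection between $\{\,n \leq N\,\}$ and pairs $(d,m)$ with $d \mid r$, $m \leq N/d$, $(m, r/d) = 1$, and that the smoothness condition $n \in \s(y)$ distributes correctly across the factorization $n = dm$. I expect this combinatorial accounting, rather than any analytic estimate, to be the main (and essentially the only) obstacle.
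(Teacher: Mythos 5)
Your proposal is correct and follows essentially the same route as the paper: split the sum according to $d=(n,r)$, expand the additive character $e(a/q)$ for $(a,q)=1$ as $\frac{1}{\varphi(q)}\sum_{\psi}\tau(\psi)\overline{\psi}(a)$ via orthogonality, and interchange the finite sums, with the coprimality condition on the inner variable absorbed by the vanishing of $\overline{\psi}$. The combinatorial points you flag (the bijection $n\leftrightarrow(d,m)$ and the multiplicativity of the smoothness condition) are exactly the content of the paper's first displayed step, and you handle them correctly.
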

\vspace{-0.25in}
The proof can be found in Section \ref{SectGSIDandReductionToRationals}.

In the case that $\alpha$ belongs to a major arc, $r$ will be small, so the only factor on the right hand side which can make a significant contribution is the innermost sum. We thus must turn our attention to sums of the form
$$
\mathop{\sum_{n \leq x}}_{n \in \s(y)} \frac{g(n)}{n}
$$
for $g \in \F$; it is here that we introduce significant refinements into Granville and Soundararajan's ideas.

\underline{{\sc Step 3: A Hal\'{a}sz-like result}} \newline
As mentioned in the introduction, Hal\'{a}sz \cite{HalaszMeanValMultFns} realized that the mean value of $f \in \F$ can be large only if $f(n)$ mimics a function of the form $n^{it}$, where this mimicry is measured by the MM metric. In \cite{MontgomeryMittagLeffler}, Montgomery reworked Hal\'{a}sz' method to bound the magnitude of $\displaystyle \sum_{n \leq x} f(n)$ in terms of the behavior of the generating function of $f$,
\begin{equation}
\label{EqGenSeries}
F(s) := \sum_{n = 1}^\infty \frac{f(n)}{n^s} ,
\end{equation}
in a vertical strip of the complex plane. In $\S$III.4.3 of his excellent book \cite{TenenbaumBook}, Tenenbaum outlines a method of bounding $F(s)$ in terms of the quantity
$$
\m(f; \, X, \, T) := \min_{|t| \leq T} \; \D\Bigl(f(n), n^{it}; X\Bigr)^2  .
$$
In combination with Montgomery's work, this leads to an elegant quantitative version of Hal\'{a}sz' result.

Inspired by Montgomery's reworking of Hal\'{a}sz' method, in 2001 Montgomery and Vaughan \cite{MVMeanValsOfMultFns} bounded
$$
\sum_{n \leq x} \frac{f(n)}{n}
$$
in terms of $F(s)$, the generating series of $f$ defined in (\ref{EqGenSeries}). In Section \ref{SectHalaszTypeBound} we apply Tenenbaum's method to the Montgomery-Vaughan bound to prove the following:
\begin{theorem}
\label{HalMVTenThm}
For $f \in \F$, $x \geq 2$, and $T \geq 1$,
$$
\sum_{n \leq x} \frac{f(n)}{n} \ll (\log x) \, e^{- \m(f; \, x, \, T)} + \frac{1}{\sqrt{T}}
$$
where $\m$ is defined by (\ref{MMmdefn}).
\end{theorem}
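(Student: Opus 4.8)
The plan is to feed Tenenbaum's technique for bounding Dirichlet series into the Montgomery--Vaughan estimate for logarithmic mean values. Montgomery and Vaughan \cite{MVMeanValsOfMultFns} bound $\sum_{n\le x}f(n)/n$ in terms of the size of the generating series $F$ of (\ref{EqGenSeries}) on the vertical segment $\Re s = \sigma_0 := 1+\tfrac{1}{\log x}$, $|\Im s|\le\log x$, with most of the weight concentrated near $\Im s=0$; and Tenenbaum's method ($\S$III.4.3 of \cite{TenenbaumBook}) shows that the size of $F$ on such a segment is controlled by how closely $f(n)$ imitates $n^{it}$. Combining the two, the part of the Montgomery--Vaughan estimate coming from $|\Im s|\le T$ should produce the term $(\log x)\,e^{-\m(f;\,x,\,T)}$, while the part from $T<|\Im s|\le\log x$ should be absorbed into $T^{-1/2}$.

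In more detail, I would first record the Montgomery--Vaughan inequality in the precise shape needed: roughly, it bounds $\bigl|\sum_{n\le x}f(n)/n\bigr|$ by $1$ plus a suitably weighted average of $|F(\sigma_0+it)|$ over $|t|\le\log x$, with the weight decaying in $|t|$. The analytic heart is then the pointwise bound
\[
|F(\sigma_0+it)| \ll (\log x)\,\exp\!\bigl(-\D(f(n),\,n^{it};\,x)^2\bigr),
\]
valid for every real $t$. This follows by expanding $\log|F(\sigma_0+it)| = \sum_p \Re\bigl(-\log(1-f(p)p^{-\sigma_0-it})\bigr)$ via the Euler product (legitimate since $\Re s>1$), discarding the prime-power terms (whose total contribution is $O(1)$), and writing the remaining prime sum as $\sum_{p\le x}\frac{\Re(f(p)p^{-it})}{p}$ plus the corrections $\sum_{p\le x}\Re(f(p)p^{-it})(p^{-\sigma_0}-p^{-1})$ and $\sum_{p>x}\frac{\Re(f(p)p^{-it})}{p^{\sigma_0}}$; the first sum equals $\log\log x - \D(f(n),n^{it};x)^2 + O(1)$ by Mertens, while both corrections are $O(1)$ --- the key one because $1-e^{-u}\le u$ together with $\sum_{p\le x}\frac{\log p}{p}=\log x+O(1)$, which is exactly what yields the sharp constant $1$ in the exponent. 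I would also need the routine second-moment bound $\int_{-\log x}^{\log x}|F(\sigma_0+it)|^2\,dt \ll \log x$, a consequence of the mean value theorem for Dirichlet series applied to the frequencies $\log n$ (whose spacing near $\log n$ is $\asymp 1/n$).

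To finish, split the range of integration. For $|t|\le T$ one has $\D(f(n),n^{it};x)^2\ge\m(f;x,T)$, so the pointwise bound gives $|F(\sigma_0+it)|\ll(\log x)\,e^{-\m(f;x,T)}$ there, producing the first term. For $T<|t|\le\log x$, the decay of the Montgomery--Vaughan weight combined with Cauchy--Schwarz against the second-moment estimate should bound the contribution by $\ll T^{-1/2}$; and if $T\ge\log x$ one simply invokes the case $T=\log x$, since then $T^{-1/2}\le(\log x)^{-1/2}$ while $e^{-\m(f;x,T)}$ only increases. The main obstacle --- and where I expect most of the work to lie --- is twofold: carrying out the correction-term bookkeeping above so as to get the \emph{sharp} constant $1$ in the exponent, and pinning down the precise form of the Montgomery--Vaughan estimate (in particular its rate of decay in $t$) tightly enough that the tail contribution really comes out as $T^{-1/2}$ rather than something weaker. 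A minor point: \cite{MVMeanValsOfMultFns} is phrased for general multiplicative $f$, so one should note that the completely multiplicative case needed here is covered.
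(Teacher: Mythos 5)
Your proposal follows the same two-ingredient strategy as the paper: feed Tenenbaum's bound on $F(1+\alpha+it)$ --- namely $\ll(\log x)\,e^{-\m(f;\,x,\,T)}$ for $|t|\le T$, uniformly in $\alpha\in[1/\log x,1]$ --- into the Montgomery--Vaughan logarithmic mean value estimate, and split the $t$-range at $T$. The one real divergence is the tail $|t|>T$: you propose Cauchy--Schwarz against a second moment $\int|F(\sigma_0+it)|^2\,dt\ll\log x$, whereas the paper uses the trivial pointwise bound $|F(1+\alpha+it)|\le\zeta(1+\alpha)\ll 1/\alpha$ together with the $\ell^2$ structure already built into the Montgomery--Vaughan inequality, i.e. $\sum_{|k|>T}\frac{1}{k^2+\alpha^2}\ll\frac{1}{\alpha^2T}$ summed over the unit boxes $\B_k(\alpha)$. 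As written your tail argument does not quite fit the actual shape of the Montgomery--Vaughan bound, which involves a \emph{supremum} of $|F(s)/(s-1)|$ over each unit box rather than an $L^1$ average in $t$; a sup over a box is not directly dominated by an $L^2$ integral, so the second-moment route needs either an extra (Sobolev-type) step or, more simply, replacement by the trivial pointwise bound $\ll 1/\alpha$, which you already have in hand and which yields the same $T^{-1/2}$ after integrating $\frac{1}{\log x}\int_{1/\log x}^{1}\frac{d\alpha}{\alpha^2\sqrt{T}}$. The remaining items you flag as the main work --- the sharp constant $1$ in the exponent via Mertens and the control of the correction terms --- are exactly the content of the Tenenbaum input, which the paper quotes as a black box rather than reproving.
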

\vspace{-0.25in}
From this it is not hard to deduce the following useful corollary.
\begin{corollary}
\label{HalMVTenCor}
For $f \in \F$, $x \geq 2$, $y \geq 2$, and $T \geq 1$,
$$
\mathop{\sum_{n \leq x}}_{n \in \s(y)} \frac{f(n)}{n} \ll (\log y) \, e^{- \m(f; \, y, \, T)} + \frac{1}{\sqrt{T}} .
$$
\end{corollary}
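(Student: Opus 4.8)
The plan is to deduce this from Theorem \ref{HalMVTenThm} applied to a completely multiplicative function supported on the $y$-smooth integers, the only real issue being a comparison of the quantity $\m$ at the two scales $x$ and $y$. First I would pass to the \emph{$y$-smooth restriction} $g\in\F$ of $f$: set $g(p)=f(p)$ for primes $p\le y$, $g(p)=0$ for primes $p>y$, and extend completely multiplicatively. Then $g(n)=f(n)$ when $n\in\s(y)$ and $g(n)=0$ otherwise, so Theorem \ref{HalMVTenThm} gives
$$\mathop{\sum_{n\le x}}_{n\in\s(y)}\frac{f(n)}{n}=\sum_{n\le x}\frac{g(n)}{n}\ll(\log x)\,e^{-\m(g;\,x,T)}+\frac{1}{\sqrt T}.$$
Since $g$ and $f$ agree on all primes up to $y$, we have $\D(g,n^{it};y)=\D(f,n^{it};y)$ for every $t$, hence $\m(g;\,y,T)=\m(f;\,y,T)$. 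Thus it is enough to prove the scale-change estimate
$$\m(g;\,y,T)\ \le\ \m(g;\,x,T)+\log\!\Bigl(\tfrac{\log y}{\log x}\Bigr)+O(1),$$
which I will call $(\ast)$: granting it, $(\log x)\,e^{-\m(g;x,T)}\le e^{O(1)}(\log y)\,e^{-\m(g;y,T)}=e^{O(1)}(\log y)\,e^{-\m(f;y,T)}$, and the corollary follows. When $x\ge y$ the estimate $(\ast)$ is in fact an identity up to $O(1)$: since $g$ vanishes on the primes in $(y,x]$, $\D(g,n^{it};x)^2=\D(g,n^{it};y)^2+\sum_{y<p\le x}\tfrac1p$ for every $t$, and the last sum is independent of $t$, so by Mertens' theorem $\m(g;x,T)=\m(g;y,T)+\log(\log x/\log y)+O(1)$, which rearranges to $(\ast)$.

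The case $x<y$ is the substantive one, and I expect it to be the main obstacle: here $(\ast)$ is a Lipschitz-type comparison of $\m$ over a potentially enormous range of scales. Write $\D(g,n^{it};y)^2=\D(g,n^{it};x)^2+R(t)$, where $R(t):=\sum_{x<p\le y}\frac{1-\operatorname{Re}(g(p)p^{-it})}{p}$, and choose $t^\ast\in[-T,T]$ with $\D(g,n^{it^\ast};x)^2=\m(g;x,T)$. The crude bound $R(t)\le\sum_{x<p\le y}\tfrac2p=2\log(\log y/\log x)+O(1)$ loses a factor of $2$ and is not strong enough (it would only yield $(\log x)e^{-\m(g;x,T)}\ll\tfrac{(\log y)^2}{\log x}e^{-\m(g;y,T)}$), so instead I would \emph{average} $R$ over $t$ in the interval $I:=[t^\ast-\delta,\,t^\ast+\delta]\cap[-T,T]$ with $\delta=1/\log x$ (which has length $\ge\delta$). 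For $t\in I$, the estimates $|p^{-it}-p^{-it^\ast}|\le|t-t^\ast|\log p$ and $\sum_{p\le x}\tfrac{\log p}{p}=\log x+O(1)$ show $\D(g,n^{it};x)^2\le\m(g;x,T)+O(1)$; meanwhile, for $p>x$ one has $\bigl|\tfrac1{|I|}\int_I p^{-it}\,dt\bigr|\le\tfrac{2}{|I|\log p}\le\tfrac{2\log x}{\log p}$, so
$$\frac1{|I|}\int_I R(t)\,dt\ \le\ \sum_{x<p\le y}\frac1p+2\log x\!\!\sum_{x<p\le y}\frac1{p\log p}\ =\ \log\!\Bigl(\tfrac{\log y}{\log x}\Bigr)+O(1),$$
using the standard bound $\sum_{p>x}\tfrac1{p\log p}\ll\tfrac1{\log x}$. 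Hence some $t\in I\subseteq[-T,T]$ satisfies $R(t)\le\log(\log y/\log x)+O(1)$, and for that $t$,
$$\m(g;y,T)\ \le\ \D(g,n^{it};y)^2=\D(g,n^{it};x)^2+R(t)\ \le\ \m(g;x,T)+\log\!\Bigl(\tfrac{\log y}{\log x}\Bigr)+O(1),$$
which is $(\ast)$.

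In short, the one delicate point is securing the constant $1$ (not $2$) in front of $\log(\log y/\log x)$ in $(\ast)$ when $x<y$; this is exactly what averaging over the short $t$-interval $I$, rather than estimating $R$ pointwise, is designed to buy.
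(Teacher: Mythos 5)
Your proof is correct, and for $y \le x$ it coincides with the paper's: smooth $f$ to its $y$-smoothed version, apply Theorem \ref{HalMVTenThm}, and convert $\m(f_y;x,T)$ into $\m(f;y,T)$ plus $\log(\log x/\log y)+O(1)$ using Mertens. Where you genuinely go beyond the paper is the case $x<y$. The paper's displayed computation splits $\sum_{p\le x}$ as $\sum_{p\le y}+\sum_{y<p\le x}$, which tacitly assumes $y\le x$; read literally as an identity it fails for $x<y$ (take $f\equiv 1$: both sides of $\m(f;y,T)=\m(f;x,T)+\log(\log y/\log x)+O(1)$ cannot match since the left side is $0$). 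Only the inequality $\m(f;y,T)\le\m(f;x,T)+\log(\log y/\log x)+O(1)$ is needed for the corollary, and as you correctly observe the pointwise bound $R(t)\le 2\sum_{x<p\le y}p^{-1}$ loses a factor of $2$ in the exponent and is insufficient. Your fix — averaging $R(t)$ over the interval $I=[t^*-1/\log x,\,t^*+1/\log x]\cap[-T,T]$, using $\bigl|\,|I|^{-1}\int_I p^{-it}\,dt\bigr|\le 2\log x/\log p$ for $p>x$ together with $\sum_{p>x}(p\log p)^{-1}\ll(\log x)^{-1}$, and noting that $\D(g,n^{it};x)^2$ moves by only $O(1)$ across $I$ — is exactly the right device and is carried out correctly (including the check that $|I|\ge 1/\log x$ even when $t^*$ sits at an endpoint of $[-T,T]$). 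So your argument not only reproduces the paper's proof but supplies the missing case; note that this case is not vacuous, since in Section 5 the corollary is applied with $x=N$, where $N$ may be as small as $M$, which is far below $y$.
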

\vspace{-0.25in}
{\sc Remark}: Taking $y = x$ in the corollary immediately yields Theorem \ref{HalMVTenThm}, so the two statements are in fact equivalent.

The above simultaneously refines and generalizes Lemma 4.3 from \cite{GSPretentiousCharactersAndPV}, and is sufficiently strong for our intended application of an optimal bound on odd-order character sums. However, we suspect that more can be said. Colloquially, our bound indicates that $\sum \frac{f(n)}{n}$ can be large only if $f(n)$ mimics a function of the form $n^{it}$. This is an artifact from the proof of the Hal\'{a}sz'-Montgomery-Tenenbaum theorem, which drew the same conclusion for the sum $\sum f(n)$. In that case, $n^{it}$ is an actual enemy since $\sum n^{it}$ is not $o(x)$. Our situation is quite different: if $f(n)$ closely mimics $n^{it}$, then
$$
\sum_{n \leq x} \frac{f(n)}{n} \approx \zeta(1 - it)
$$
which is bounded so long as $t$ is neither too small nor too large. Therefore, for sums of the form considered in Theorem \ref{HalMVTenThm}, $n^{it}$ is no longer an enemy -- the only real enemy is the constant function 1. This leads us to make the following conjecture:
\begin{conjecture}
\label{conjecturenotwist}
For $f \in \F$ and $2 \leq y \leq x$,
$$
\mathop{\sum_{n \leq x}}_{n \in \s(y)} \frac{f(n)}{n} \ll 1+ (\log y) \, e^{- \D(f,\, 1;\, y)^2}  .
$$
\end{conjecture}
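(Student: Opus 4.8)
The natural starting point is Corollary~\ref{HalMVTenCor}. Taking $T=\log^{2}y$ there gives
$$\mathop{\sum_{n\le x}}_{n\in\s(y)}\frac{f(n)}{n}\ \ll\ (\log y)\,e^{-\m(f;\,y,\,\log^{2}y)}+\frac{1}{\log y},$$
so, writing $\m(f;\,y,\,\log^{2}y)=\D(f,n^{it_{0}};y)^{2}$ for the minimising frequency $t_{0}$ (with $|t_{0}|\le\log^{2}y$), the whole game is to replace $\D(f,n^{it_{0}};y)^{2}$ by $\D(f,1;y)^{2}$ — that is, to show that the twist by $n^{it_{0}}$ is not a genuine obstruction for a $1/n$-weighted sum. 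This ought to hold because when $f(n)$ closely mimics $n^{it_{0}}$ the sum is governed by the finite Euler product $\prod_{p\le y}(1-f(p)/p)^{-1}$, and, by Mertens' theorem, $\bigl|\prod_{p\le y}(1-f(p)/p)^{-1}\bigr|=\exp\bigl(\sum_{p\le y}\mathrm{Re}\,f(p)/p+O(1)\bigr)\asymp(\log y)\,e^{-\D(f,1;y)^{2}}\asymp\min\{\log y,\,1/|t_{0}|\}$, which is bounded as soon as $|t_{0}|$ is bounded away from $0$; whereas Corollary~\ref{HalMVTenCor} charges $\asymp\log y$ in that regime, too large by a power of $|t_{0}|\log y$.

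I would first dispose of the trivial range: if $\D(f,1;y)\ll 1$ then $\bigl|\sum_{n\le x,\,n\in\s(y)}f(n)/n\bigr|\le\sum_{n\in\s(y)}1/n=\prod_{p\le y}(1-1/p)^{-1}\ll\log y\ll(\log y)\,e^{-\D(f,1;y)^{2}}$, and we are done. So assume $\D(f,1;y)\gg 1$, set $g(n):=f(n)\,n^{-it_{0}}\in\F$, and note that $\D(g,1;y)=\D(f,n^{it_{0}};y)$, that $\sum_{n\le x,\,n\in\s(y)}f(n)/n=\sum_{n\le x,\,n\in\s(y)}g(n)/n^{1-it_{0}}$, and that $\prod_{p\le y}(1-g(p)p^{it_{0}-1})^{-1}=\prod_{p\le y}(1-f(p)/p)^{-1}$. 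A short computation — split the primes at $e^{1/|t_{0}|}$ and apply Mertens' theorem — yields the elementary inequality
$$\D(f,1;y)^{2}-\D(f,n^{it_{0}};y)^{2}\ \le\ 2\log\bigl(2+|t_{0}|\log y\bigr)+O(1),$$
so that $(\log y)\,e^{-\D(f,n^{it_{0}};y)^{2}}\ll(\log y)\,e^{-\D(f,1;y)^{2}}\,\bigl(2+|t_{0}|\log y\bigr)^{2}$. If $|t_{0}|\le 1/\log y$ the last factor is $O(1)$ and the conjecture drops out; all the difficulty is concentrated in the range $1/\log y<|t_{0}|\le\log^{2}y$, where one must recover a saving of $\bigl(|t_{0}|\log y\bigr)^{2}$ over what Corollary~\ref{HalMVTenCor} provides.

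The plan to recover it is to reopen the Montgomery--Vaughan mean-value argument underlying Theorem~\ref{HalMVTenThm} rather than quote it as a black box. Representing the $1/n$-weighted sum by a truncated Perron integral on the line $\mathrm{Re}\,s=1+1/\log y$, one keeps the kernel weight $1/\max\{1/\log y,|\tau|\}$ attached to the generating series $F(1+1/\log y+i\tau)$, $F(s)=\sum_{n\ge1}f(n)/n^{s}$, instead of extracting a crude supremum in $\tau$. A spike of $|F|$ near $\tau=t_{0}$ — the shape forced precisely when $f$ mimics $n^{it_{0}}$ — has height $\asymp\log y$ and width $\asymp 1/\log y$, so against this kernel it contributes only $\asymp(\log y)\cdot(1/\log y)\cdot(1/|t_{0}|)=1/|t_{0}|$, in line with the Euler-product heuristic, rather than the $\asymp\log y$ charged by the black-box bound. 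Turning this picture into a proof requires feeding in a Montgomery-type $L^{2}$ estimate for $F(1+1/\log y+i\tau)$ on $|\tau|\le T$ (the pointwise bound $|F|\ll(\log y)^{1+o(1)}e^{-\D(f,n^{i\tau};y)^{2}}$ cannot be applied at every $\tau$ simultaneously — that overcounts by a large power of $\log y$) and combining it with the kernel weight and the elementary inequality above so that the $\bigl(|t_{0}|\log y\bigr)^{2}$ loss is absorbed. An alternative route is partial summation from $S_{f}(t)=\sum_{n\le t}f(n)$ together with an \emph{asymptotic} (rather than merely upper-bound) form of the Hal\'{a}sz--Montgomery--Tenenbaum theorem, which would show directly that the $n^{it_{0}}$-component of $S_{f}(t)$ contributes $O(1/|t_{0}|)$ to $\int_{1}^{x}S_{f}(t)\,t^{-2}\,dt$.

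The step I expect to be the main obstacle — and the reason this is only stated as a conjecture — is making the $L^{2}$ information on $F$ (equivalently, the asymptotic form of Hal\'{a}sz's theorem) interact with the $1/\max\{1/\log y,|\tau|\}$ kernel sharply enough to cancel the power-of-$(|t_{0}|\log y)$ discrepancy \emph{uniformly} over all the frequencies at which $f$ might mimic some $n^{it}$. Two remaining tasks are comparatively routine: reducing the $\s(y)$-with-$n\le x$ sum to the case $y=x$, in the spirit of the passage from Theorem~\ref{HalMVTenThm} to Corollary~\ref{HalMVTenCor}, and controlling the tail $\sum_{n>x,\,n\in\s(y)}g(n)/n^{1-it_{0}}$ so that what is left is absorbed into the claimed $O(1)$.
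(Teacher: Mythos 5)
The statement you are addressing is stated in the paper as Conjecture \ref{conjecturenotwist}: the paper offers no proof of it, and in fact uses the weaker Corollary \ref{HalMVTenCor} (with the twist by $n^{it}$ left in) throughout precisely because this remains open. Your preliminary reductions are sound: the case $\D(f,1;y)\ll 1$ is trivial; the comparison $\D(f,1;y)^2-\D(f,n^{it_0};y)^2\le 2\log\bigl(2+|t_0|\log y\bigr)+O(1)$ does follow from Mertens' theorem by splitting the primes at $e^{1/|t_0|}$; and your Euler-product heuristic for why the twist should be removable (that $\sum f(n)/n\approx\zeta(1-it_0)\ll 1/|t_0|$ when $f$ mimics $n^{it_0}$) is exactly the justification the paper itself gives in the discussion preceding the conjecture.

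Nevertheless the argument has a genuine and decisive gap, which you yourself flag. In the range $1/\log y<|t_0|\le\log^2 y$ you must improve on Corollary \ref{HalMVTenCor} by a factor of $(|t_0|\log y)^2$, and the proposed mechanism — reopening the Montgomery--Vaughan integral representation and playing the kernel weight against an $L^2$ bound for $F(1+1/\log y+i\tau)$ — is described only at the level of a picture (a spike of height $\asymp\log y$ and width $\asymp 1/\log y$ near $\tau=t_0$). No such $L^2$ estimate is stated, let alone proved, and the hard point is exactly to show that $|F|$ cannot be large on a set of $\tau$ of measure much larger than $1/\log y$ while $f$ simultaneously stays close to $n^{it_0}$; as you note, applying the pointwise Hal\'asz-type bound at every $\tau$ overcounts by a power of $\log y$. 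Everything preceding this step is either trivial or already contained in Corollary \ref{HalMVTenCor}, so the proposal does not establish anything beyond what the paper proves unconditionally; your contribution is a correct diagnosis of where the difficulty is concentrated, not a proof, and the statement remains a conjecture after your argument just as it is in the paper.
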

\vspace{-0.25in}
Note that the restriction that $y \leq x$ is necessary, as shown by an example of Granville and Soundararajan (directly following Lemma 4.3 of \cite{GSPretentiousCharactersAndPV}).

If some form of this conjecture holds, it would improve our main results (Theorems 1, 2, and \ref{HybridBoundThm}) by removing the possible twist by $n^{it}$, and would allow us to state all the results purely in terms of the distance from $f(n)$ to the nearest primitive character.

\underline{{\sc Step 4: Handling the major arcs}} \newline
One important discovery of Granville and Soundararajan in their study of the MM metric was a repulsion principle similar to the Deuring-Heilbronn phenomenon: $f$ cannot mimic two different characters too closely. Thus, if we identify the `exceptional character' $\xi \, (\modulo m)$ which $f$ most nearly mimics (in the sense made precise in the statement of Theorem \ref{HybridBoundThm}), then $f$ must be quite far from mimicking any other primitive character. In their study of mean values of multiplicative functions in arithmetic progressions \cite{BGS}, Balog, Granville, and Soundararajan derived explicit lower bounds on $\m(f\overline{\psi}; \, y, \, \log^2 y)$ for all primitive $\psi \neq \xi$.

With this in mind, we turn to major arcs. Suppose that $\alpha \approx \frac{b}{r}$ with $r$ small, so that the Montgomery-Vaughan result (Corollary \ref{MinorArcsCorollary}) is not useful. Plugging in the estimate of Corollary \ref{HalMVTenCor} into the right side of the Granville-Soundararajan identity (Proposition \ref{GSID}), we quickly find an upper bound on the magnitude of the left side in terms of the quantities $\m(f\overline{\psi}; \, N/d, \, T)$, where $T$ is a parameter we can specify as we wish and $\psi$ runs over all characters of modulus dividing $r$. If $r$ is not a multiple of the exceptional modulus $m$, then none of the characters $\psi$ are induced by the exceptional character $\xi$; the repulsion principle then implies that $\m(f\overline{\psi}; \, y, \, \log^2 y)$ is bounded from below for all $\psi$ in the sum, meaning that the contribution from each character to the sum is not too large.

If on the other hand $m \mid r$, then some of the characters we are summing over might be induced by the exceptional character $\xi$. In this case, once again using the repulsion principle we can bound $\m(f\overline{\psi}; \, y, \, \log^2 y)$ from below for all $\psi$ which are not induced by $\xi$; however, there will now be a main term coming from the characters induced by the exceptional character. In Section \ref{SectMajorArcs} we make these arguments precise and deduce the following:
\begin{theorem}
\label{MajorArcsThm}
Given $N \geq 2$, $y \geq 16$, $f \in \F$, and $\frac{b}{r}$ a reduced fraction\footnote{We adopt the convention that the reduced form of 0 is $\frac{0}{1}$.} with $1 \leq r \leq \log y$.
Suppose that as $\psi$ ranges over all primitive characters of conductor less than $r$, $\m(f\overline{\psi}; \, y, \, \log^2 y)$ is minimized when $\psi~=~\xi\,(\modulo m)$.
Then
$$
\mathop{\sum_{n \leq N}}_{n \in \s(y)}
\frac{f(n)}{n} \, e\Bigl(\frac{b}{r} n\Bigr)
\ll
\frac{1}{\sqrt{r}} \, (\log y)^{2/3 + o(1)} +
\sqrt{r} \, e^{C \sqrt{\log \log y}} +
\begin{cases}
\frac{\sqrt{m}}{\varphi(m)} \, (\log y) \, e^{-\m(f\overline{\xi}; \, y, \, \log^2 y)}
& \text{  if  } m \mid r \\
0 & \text{otherwise}
\end{cases}
$$
where both $C$ and the implicit constant are absolute, and $o(1) \to 0$ as $y \to \infty$.
\end{theorem}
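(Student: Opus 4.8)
The plan is to run the Granville--Soundararajan identity (Proposition~\ref{GSID}) to convert the exponential sum into a combination of smooth twisted logarithmic sums, and then to feed in the Hal\'{a}sz-type bound of Corollary~\ref{HalMVTenCor}. Concretely, Proposition~\ref{GSID} writes $\sum_{n\le N,\,n\in\s(y)}\tfrac{f(n)}{n}\,e\bigl(\tfrac{b}{r}n\bigr)$ as a sum over divisors $d\mid r$ and Dirichlet characters $\psi\pmod{r/d}$ of the quantities $\sum_{n\le N/d,\,n\in\s(y)}\tfrac{f(n)\overline{\psi}(n)}{n}$, with weights $\tfrac{f(d)}{d}\cdot\tfrac{\tau(\psi)\,\overline{\psi}(b)}{\varphi(r/d)}$. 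Since $f\overline{\psi}\in\F$, Corollary~\ref{HalMVTenCor} taken with the parameter $T=\log^2 y$ (so as to match the hypothesis on $\m$) bounds each inner sum by $\ll(\log y)\,e^{-\m(f\overline{\psi};\,y,\,\log^2 y)}+(\log y)^{-1}$, while the classical estimate $|\tau(\psi)|\le\sqrt{r/d}$ --- more precisely, $|\tau(\psi)|$ equals $0$ or $\sqrt{q^{*}}$ according as $\psi$ is not or is an admissible lift of a primitive character of conductor $q^{*}$ --- controls the weights. This already expresses the whole sum in terms of the numbers $\m(f\overline{\psi};\,y,\,\log^2 y)$ as $\psi$ ranges over characters whose conductor divides $r$.

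Next I would split these characters into the characters $\xi_{d}$ induced by the exceptional character $\xi\pmod m$ --- at most one for each $d$, and occurring only when $m\mid r$ --- and all the rest. For the $\xi_{d}$ terms one uses $|\tau(\xi_{d})|\le\sqrt m$, the inequality $\varphi(r/d)\ge\varphi(m)$ valid whenever $m\mid r/d$, and the fact that $\m(f\overline{\xi_{d}};\,y,\,\log^2 y)$ and $\m(f\overline{\xi};\,y,\,\log^2 y)$ differ by at most $\sum_{p\mid r}\tfrac1p=O(\log\log\log y)$; summing the convergent series $\sum_{d}d^{-1}$ then produces the stated main term $\tfrac{\sqrt m}{\varphi(m)}(\log y)\,e^{-\m(f\overline{\xi};\,y,\,\log^2 y)}$, present only in the case $m\mid r$. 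For every other character $\psi$ I would combine the hypothesis that $\xi$ minimises $\m(f\overline{\psi^{*}};\,y,\,\log^2 y)$ over primitive $\psi^{*}$ of conductor less than $r$ with the repulsion estimates of Balog--Granville--Soundararajan (a pretentious analogue of the Deuring--Heilbronn phenomenon) to force $\m(f\overline{\psi};\,y,\,\log^2 y)$ to be large --- bounded below by essentially $\tfrac13\log\log y$ --- so that the corresponding inner sum is $\ll(\log y)^{2/3+o(1)}$. Summing over the $d$ and $\psi$, this treatment is already satisfactory when $r$ is small; for larger $r$ I would instead apply Corollary~\ref{MinorArcsCorollary} directly to $\alpha=\tfrac{b}{r}$ (which trivially satisfies $|\alpha-\tfrac br|=0\le\tfrac1{r^2}$), and then interpolate between the two inputs, choosing the cut-off in $r$ to balance them, so as to arrive at an error of the shape $\tfrac1{\sqrt r}(\log y)^{2/3+o(1)}+\sqrt r\,e^{C\sqrt{\log\log y}}$.

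The main obstacle is precisely this error-term bookkeeping: securing the sharp dependence $\tfrac1{\sqrt r}$ (rather than a positive power of $r$ in the numerator) is delicate, because the Hal\'{a}sz input is effective for all $r$ but only down to the barrier $(\log y)^{2/3}$, whereas the Montgomery--Vaughan input supplies the factor $\tfrac1{\sqrt r}$ but only relative to the trivial size $\log y$; marrying the two, while keeping track of which characters actually carry large Gauss sums and of the conductor-dependence in the Balog--Granville--Soundararajan lower bounds, is where the work lies, far more than in any individual estimate. A secondary technical point is the boundary case of primitive characters of conductor exactly $r$, which the hypothesis on $\xi$ (conductor $<r$) does not cover: such a character is either equal to $\xi$ itself or is dispatched directly by the repulsion principle. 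Once all contributions are assembled and the $(\log y)^{o(1)}$ losses absorbed into the displayed exponents, the theorem follows.
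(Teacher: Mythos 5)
Your overall architecture (Granville--Soundararajan identity, then Corollary \ref{HalMVTenCor} with $T=\log^2 y$ on each twisted inner sum, then a split into the exceptional character and the rest, with the BGS repulsion principle handling the rest) is the paper's architecture. But the step where you derive the error term $\frac{1}{\sqrt r}(\log y)^{2/3+o(1)}+\sqrt r\,e^{C\sqrt{\log\log y}}$ has a genuine gap, and you have correctly sensed where it is. If you use only the repulsion bound $\m(f\overline{\psi};y,\log^2 y)\geq(\tfrac13+o(1))\log\log y$ for \emph{every} non-exceptional character, then summing the $\varphi(r/d)$ characters modulo $r/d$, each carrying weight $\frac{|\tau(\psi)|}{\varphi(r/d)}\leq\frac{\sqrt{r/d}}{\varphi(r/d)}$, and then summing over $d\mid r$ with weight $\tfrac1d$, yields $\sum_{d\mid r}\frac{\sqrt{r/d}}{d}(\log y)^{2/3+o(1)}\ll\sqrt r\,(\log y)^{2/3+o(1)}$ --- a factor $r$ worse than claimed. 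Your proposed rescue, interpolating with Corollary \ref{MinorArcsCorollary} at $\alpha=b/r$ for larger $r$, cannot close this: that corollary gives at best $\frac{(\log r)^{5/2}}{\sqrt r}\log y$, which exceeds $\frac{1}{\sqrt r}(\log y)^{2/3+o(1)}$ for \emph{every} $r\leq\log y$, and balancing the two inputs at $r\asymp(\log y)^{1/3}$ leaves an error of size about $(\log y)^{5/6}$. Since $1-\delta_3<5/6$, this would even ruin the application to cubic character sums. The missing ingredient is the \emph{second} Balog--Granville--Soundararajan estimate (Lemma 3.1 of \cite{BGS}): ordering the primitive characters $\psi_k$ by increasing $\m(f\overline{\psi_k};y,\log^2 y)$, one has $\m(f\overline{\psi_k};y,\log^2 y)\geq\log\log y+O(\sqrt{\log\log y})$ for all $k>\sqrt{\log\log y}$. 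Thus only $O(\sqrt{\log\log y})$ characters contribute at the $(\log y)^{2/3}$ level, and for these the weight $\frac{\sqrt{r/d}}{\varphi(r/d)}\cdot\sqrt{\log\log y}\ll(r/d)^{-1/2+o(1)}$ produces the $\frac{1}{\sqrt r}$; the remaining $\varphi(r/d)$ characters each contribute only $e^{O(\sqrt{\log\log y})}$, which is exactly the source of the $\sqrt r\,e^{C\sqrt{\log\log y}}$ term. The theorem is a pure major-arc statement; Montgomery--Vaughan plays no role in its proof.

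Two smaller points. First, for the main term you propose to compare $\m(f\overline{\xi_d})$ with $\m(f\overline{\xi})$ directly, losing a factor $e^{O(\sum_{p\mid r}1/p)}$; since the stated main term carries no $(\log y)^{o(1)}$ slack, this loss is not absorbable as written. The paper instead applies Hildebrand's lemma to replace the sum over $n$ coprime to $r/d$ by an explicit Euler product over $p\mid r/d$ times the unrestricted sum of $f\overline{\xi}(n)/n$ with $\xi$ primitive, so that Corollary \ref{HalMVTenCor} is applied to $f\overline{\xi}$ itself. Second, the case $b=0$ (reduced form $\tfrac01$) is not covered by the Granville--Soundararajan identity and needs the separate two-case argument via Corollary \ref{HalMVTenCor} and the repulsion bound.
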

\vspace{-.25in}
This result is complementary to Corollary \ref{MinorArcsCorollary}, which bounded the same quantity effectively for large $r$; combining the two yields Theorem \ref{HybridBoundThm}, as will be shown in Section \ref{SectCharacterSumBound}.

Having sketched the proof of Theorem \ref{HybridBoundThm}, we move on to sketching the proof of Theorem 2.

\underline{{\sc Application to character sums}} \newline
In their proofs of the P\'{o}lya-Vinogradov inequality (\ref{PVInequality}), both P\'{o}lya and Vinogradov expanded the character sum function $S_\chi(t)$ as a Fourier series (Vinogradov had earlier proved the inequality via other means). P\'{o}lya's version of the Fourier expansion is as follows: for any $N$,
\begin{equation}
\label{PolyasFourierExpansion}
S_\chi(t) = \frac{\tau(\chi)}{2 \pi i}
\sum_{1 \leq |n| \leq N} \frac{\overline{\chi}(n)}{n} \biggl( 1 - e\Bigl( -\frac{nt}{q} \Bigr) \biggr) + O\left(1 + \frac{q\log q}{N}\right)
\end{equation}
where $\tau(\chi)$ denotes the Gauss sum, defined in (\ref{GaussSumDef}).
For any primitive Dirichlet character $\chimodq$, $|\tau(\chi)| = \sqrt{q}$, so we are left to study
sums of the form
\begin{equation}
\label{Eq:CharExpSum}
\sum_{1 \leq |n| \leq N} \frac{\overline{\chi}(n)}{n} \, e(n \alpha) .
\end{equation}
Needless to say, this looks very similar to the sums seen in Theorems 1 and \ref{HybridBoundThm}, aside from $n$ running over both positive and negative values. Actually, we will be able to use this symmetry to our advantage. As a simple illustration of this, we note that if $\chi$ has odd order and $\alpha=0$, the sum (\ref{Eq:CharExpSum}) vanishes.

One important consequence of the GRH is that, for some of the most fundamental sums which occur in multiplicative number theory, the bulk of the contribution comes from the so-called `smooth' arguments, i.e. those with no large prime factors -- see (\ref{DefnOfSmoothNumbers}) above for the precise definition.\footnote{Recall, for example, Littlewood's celebrated result that, on the GRH, $L(1,\chi)$ is well approximated by a short Euler product for any primitive Dirichlet character $\chimodq$. Expanding the product, his result can be roughly written down in the following form: assuming the GRH, $L(1, \chi) \approx \sum_{n \in \s\bigl((\log q)^2\bigr)} \frac{\chi(n)}{n}$.  See \cite{Littlewood} for the original argument, or Section 2 of \cite{GSDistr} for some unconditional versions.} The following proposition is due to Granville and Soundararajan, and is the only step in our argument which depends on the GRH.
\begin{prop}
\label{GRHprop}
Assume the GRH. Then for all primitive Dirichlet characters $\chimodq$ we have
$$
\sum_{n \leq x} \frac{\overline{\chi}(n)}{n} \, e(n\alpha) =
\mathop{\sum_{n \leq x}}_{n \in \s(y)} \frac{\overline{\chi}(n)}{n} \, e(n\alpha) +
O\left(\frac{(\log q)(\log ex)}{y^{1/6}}\right)
$$
uniformly for $1 \leq x \leq q^{3/2}$, $y \geq 1$, and all $\alpha$.
\end{prop}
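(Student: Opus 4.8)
The two sides differ only in the contribution of integers with a prime factor exceeding $y$, so it is enough to bound
$$R := \mathop{\sum_{n\le x}}_{P(n)>y}\frac{\overline\chi(n)}{n}\,e(n\alpha),$$
$P(n)$ being the largest prime factor of $n$. If $x\le y$ then $R=0$, and if $y\le(\log q)^6$ the trivial bound $|R|\le\sum_{n\le x}1/n\ll\log ex$ already suffices; so assume $y<x$ and $y>(\log q)^6$. By Dirichlet's theorem fix a reduced fraction $b/r$ with $r\le x$, $|\alpha-b/r|\le 1/(rx)$, and split into \emph{major arcs} ($r\le y^{1/3}$) and \emph{minor arcs} ($r>y^{1/3}$).

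On a major arc, $|n\alpha-nb/r|\le 1/r$ for $n\le x$, so replacing $e(n\alpha)$ by $e(nb/r)$ costs only $O(1/r)$; then expand $e(nb/r)$ over additive and Dirichlet characters as in the Granville--Soundararajan identity (Proposition \ref{GSID}). Since $r<y$, every divisor of $r$ is $y$-smooth, so the identity applies with identical outer variables to the full sum and to its $y$-smooth part, and subtracting expresses $R$, up to $O(1/r)$, as a combination (with Gauss-sum weights of total size $\ll\sqrt r$, after using $|\tau(\psi)|\le\sqrt{r/d}$ and $\sum_{d\mid r}d^{-3/2}\ll 1$) of sums $\sum_{n\le x/d,\,P(n)>y}\overline{\chi\psi}(n)/n$ over characters $\psi$ to moduli dividing $r$. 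Each $\overline{\chi\psi}=:\eta$ is \emph{non-principal} --- $\chi$ is primitive of conductor $q$ while $\psi$ has conductor at most $r<q$ --- so these are $\alpha=0$ sums, handled by Littlewood's Euler-product argument: the complementary $y$-smooth sum is essentially $\prod_{p\le y}(1-\eta(p)/p)^{-1}$, the full sum is $L(1,\eta)+O((\log q)^2/\sqrt{x/d})$ under the GRH, and $L(1,\eta)=\prod_{p\le y}(1-\eta(p)/p)^{-1}\bigl(1+O((\log q)^2/\sqrt y)\bigr)$ under the GRH via $\sum_{p>y}\eta(p)/p\ll(\log q)^2/\sqrt y$; since $|\prod_{p\le y}(1-\eta(p)/p)^{-1}|\ll\log y$, each such sum is $\ll(\log q)^3/\sqrt y$. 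Collecting, $R\ll\sqrt r\,(\log q)^3/\sqrt y+1/r\ll(\log q)^3/y^{1/3}\ll(\log q)^2/y^{1/6}$, using $y>(\log q)^6$.

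On a minor arc the needed cancellation is unconditional. I would pass to the Buchstab decomposition by the largest prime factor,
$$R=\sum_{m}\frac{\overline\chi(m)}{m}\mathop{\sum_{p\text{ prime}}}_{\max(y,P(m))<p\le x/m}\frac{\overline\chi(p)}{p}\,e(pm\alpha),$$
bound the inner sum over primes uniformly in the resonance frequency $m\alpha$ by the exponential-sum machinery of Montgomery and Vaughan that underlies Corollary \ref{MinorArcsCorollary} --- folding the twist $\overline\chi(p)$ into the bilinear coefficients, so that no factor of $q$ is incurred --- and sum the resulting estimates against $\sum_m 1/m\ll\log ex$; the exponent $1/6$ is what survives from the quality of the bilinear (Vinogradov-type) bounds. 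This minor-arc analysis is the main obstacle: the cancellation must be at once uniform in $\alpha$, a genuine power saving in $y$, and free of the (large) conductor $q$. The tension is that character expansion is efficient only for small denominators, while the bilinear bounds bite only for denominators in an intermediate range, so the frequencies requiring real work are those unusually well approximated by a fraction whose denominator is comparable to the length of the prime sum; arranging the thresholds so that every frequency is covered, consistently with the GRH input used on the major arcs, is the delicate point.
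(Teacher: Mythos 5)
The paper does not prove this proposition from first principles: the statement is exactly Lemma 5.2 of Granville--Soundararajan (a bound of the shape $\sum_{n\le N,\,n\notin\s(y)}\chi(n)e(n\alpha)\ll Ny^{-1/6}\log q$, uniform in $\alpha$ and in $N\le q^{3/2}$) followed by a single partial summation, which is what produces the factor $\log ex$. Your proposal instead tries to reprove the underlying estimate by a circle-method decomposition, and as written it is not a proof. The decisive gap is the one you flag yourself: on the minor arcs ($r>y^{1/3}$) you describe the shape of a Buchstab/bilinear argument and the tensions it must resolve, but you do not carry it out, and the difficulties you list are genuine --- the inner frequency $m\alpha$ need not inherit a minor-arc approximation from $\alpha$, Vinogradov-type bounds for prime sums save powers of the logarithm of the \emph{length} rather than powers of $y$, and no choice of thresholds covering every $\alpha$ is exhibited. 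Since the whole content of the proposition is a power saving in $y$ that is uniform in $\alpha$, leaving this case open leaves the proposition unproved.

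The major-arc half also rests on an estimate that the GRH does not supply. For $\eta=\overline{\chi\psi}$ of conductor comparable to $q$ you use $\sum_{n\le t}\eta(n)/n=L(1,\eta)+O\bigl((\log q)^2/\sqrt{t}\bigr)$, which requires square-root cancellation in $\sum_{n\le t}\eta(n)$ with only log-power losses for $t$ far below the conductor; GRH gives $\sqrt{t}\,\exp\bigl(O(\log q/\log\log q)\bigr)$ via the Lindel\"of-type bound, or $\sqrt{q}\log\log q$ via Montgomery--Vaughan, and neither suffices when, say, $x$ is a bounded power of $y$ and $y$ is a power of $\log q$ --- a range the proposition covers, being claimed uniformly for $1\le x\le q^{3/2}$. (The truncation of the $y$-smooth Euler product at $x/d$ needs a similar caveat.) Finally, the $O(1/r)$ cost of replacing $e(n\alpha)$ by $e(nb/r)$ and your closing bound $(\log q)^2y^{-1/6}$ are not $\ll(\log q)(\log ex)y^{-1/6}$ in all admissible ranges (for instance $r$ bounded with $y$ very large, or $\log x=o(\log q)$). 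The efficient route is the one the paper takes: establish, or quote, the unweighted uniform estimate for $\sum_{n\le N,\,n\notin\s(y)}\chi(n)e(n\alpha)$ and then insert the weight $1/n$ by partial summation.
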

\vspace{-0.25in}
\begin{proof}
This follows immediately from Lemma 5.2 of \cite{GSPretentiousCharactersAndPV} by partial summation.
\end{proof}
\vspace{-0.25in}
A precursor of this result, with $\alpha=0$, was proved by Montgomery and Vaughan; see Lemma 2 of \cite{MVExpSumsWMultCoeff}.

Very slightly modifying the method used to prove Theorem \ref{HybridBoundThm}, we will show (in Section \ref{SectCharacterSumBound}) that
$$
\sum_{1 \leq |n| \leq q} \frac{\overline{\chi}(n)}{n} \, e(n\alpha)
\ll
\bigl(1 - \chi(-1) \xi(-1)\bigr) \frac{\sqrt{m}}{\varphi(m)} \, (\log Q) \, e^{-\m(\chi\,\overline{\xi}; \, Q, \, \log^2 Q)} + (\log Q)^{2/3 + o(1)}
$$
where the implicit constant is absolute and $o(1) \to 0$ as $q \to \infty$. Colloquially, this indicates that there is a lot of cancellation in the sum on the left hand side unless $\chi(n)$ mimics $\xi(n) \, n^{it}$ for some primitive Dirichlet character $\xi$ of opposite parity and small conductor, and some small real number $t$.

Combining this bound with P\'{o}lya's Fourier expansion (\ref{PolyasFourierExpansion}) we immediately deduce the following:
\begin{theorem}
\label{SharpThm21}
Given a primitive Dirichlet character $\chimodq$, set
$$
Q =
\begin{cases}
q & \quad \text{unconditionally} \\
(\log q)^{12} & \quad \text{conditionally on the GRH.}
\end{cases}
$$
Suppose that as $\psi$ ranges over all primitive characters of conductor less than $\log Q$, $\m(\chi \overline{\psi}; \, Q, \, \log^2 Q)$ is minimized when $\psi~=~\xi \, (\modulo m)$.
Then
$$
\max_{t \leq q} \left|S_\chi(t)\right|
\ll
\bigl( 1 - \chi(-1) \xi(-1) \bigr) \,
\frac{\sqrt{m}}{\varphi(m)} \,
\sqrt{q} \, (\log Q) \, e^{-\m(\chi\overline{\xi}; \, Q, \, \log^2 Q)} +
\sqrt{q} \, (\log Q)^{2/3 + o(1)}
$$
where the implicit constant is absolute and $o(1) \to 0$ as $q \to \infty$.
\end{theorem}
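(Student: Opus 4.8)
The plan is to feed P\'olya's Fourier expansion (\ref{PolyasFourierExpansion}) into a two-sided refinement of the major/minor-arc machinery behind Theorems \ref{HybridBoundThm} and \ref{MajorArcsThm}. Taking $N=q$ in (\ref{PolyasFourierExpansion}) and using $|\tau(\chi)|=\sqrt q$ for primitive $\chimodq$, one gets
\[
\max_{t\le q}|S_\chi(t)| \;\ll\; \sqrt q \, \sup_{\alpha\in\R}\Bigl| \sum_{1\le|n|\le q}\frac{\overline\chi(n)}{n}\,e(n\alpha) \Bigr| + \log q,
\]
since the inner sum of (\ref{PolyasFourierExpansion}) is the $\alpha=0$ instance minus the $\alpha=-t/q$ instance of this exponential sum. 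Conditionally on the GRH I would then invoke Proposition \ref{GRHprop} with $y=Q=(\log q)^{12}$ (so that its error is $\ll(\log q)^2/Q^{1/6}\ll1$) to replace each such sum by its restriction to $\s(Q)$; unconditionally $Q=q$ and the condition $n\in\s(Q)$ is vacuous for $n\le q$. Writing $n=-m$ on the negative range and using $\overline\chi(-1)=\chi(-1)\in\{\pm1\}$, the two-sided smooth sum becomes $g(\alpha)-\chi(-1)\,g(-\alpha)$ where $g(\beta):=\sum_{n\le q,\ n\in\s(Q)}\tfrac{\overline\chi(n)}{n}e(n\beta)$.

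It therefore suffices to bound $g(\alpha)-\chi(-1)g(-\alpha)$ for each fixed $\alpha$. Applying Dirichlet's theorem (\ref{EqDiophantineApproxSect2}) to $\alpha$ with $M$ as in Theorem \ref{HybridBoundThm} produces a reduced $b/r$; the same $r$ serves for $-\alpha$. If $r>\log Q$ (a minor arc), Corollary \ref{MinorArcsCorollary} bounds both $g(\alpha)$ and $g(-\alpha)$ by $(\log Q)^{1/2+o(1)}$, which is absorbed. If $r\le\log Q$, I would use Lemma \ref{alphatoratl} to replace $\alpha$ by $b/r$ (and $-\alpha$ by the reduced form of $-b/r$) at the cost of shortening the range to some $N\le q$ and an additive $O(\log\log Q)$, and then apply the Granville--Soundararajan identity (Proposition \ref{GSID}) to each resulting rational exponential sum. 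As in the proof of Theorem \ref{MajorArcsThm}, the terms in this identity for which the inner character $\psi$ is \emph{not} induced by the exceptional character of $\overline\chi$ (which is $\overline\xi$, of conductor $m$, since $\m(\overline\chi\,\overline\psi;Q,\log^2 Q)=\m(\chi\psi;Q,\log^2 Q)$ is minimized at $\psi=\overline\xi$) are controlled, via the repulsion principle of Granville--Soundararajan and Balog--Granville--Soundararajan together with Corollary \ref{HalMVTenCor}, by $\ll\tfrac1{\sqrt r}(\log Q)^{2/3+o(1)}+\sqrt r\,e^{C\sqrt{\log\log Q}}\ll(\log Q)^{2/3+o(1)}$, \emph{uniformly in $b$}.

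The heart of the matter is the surviving main term, present only when $m\mid r$: for $g(b/r)$ it is a combination of inner sums $\sum_{n\le N/d,\ n\in\s(Q)}\tfrac{\overline\chi(n)\overline\psi(n)}{n}$ with $d\in\s(Q)$ and $\psi\pmod{r/d}$ induced by $\xi$, weighted by $\tfrac{\overline\chi(d)}{d\,\varphi(r/d)}\tau(\psi)\overline\psi(b)$. Bounding each inner sum by Corollary \ref{HalMVTenCor} with $T=\log^2 Q$ (and noting that inducing $\xi$ to modulus $r/d\le\log Q$ changes the relevant mimicry distance by only $O(\sum_{p\mid r}1/p)$, which is absorbed into the $o(1)$), and using $|\tau(\psi)|\le\sqrt m$ together with the estimate for $\sum_{d}\tfrac1{d\,\varphi(r/d)}$ from the proof of Theorem \ref{MajorArcsThm}, this main term is $\ll\tfrac{\sqrt m}{\varphi(m)}(\log Q)\,e^{-\m(\chi\overline\xi;Q,\log^2 Q)}$ in modulus. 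Crucially, passing from $\alpha\approx b/r$ to $-\alpha\approx -b/r$ replaces $b$ by $-b$, hence multiplies every such main-term contribution by $\overline\psi(-1)=\xi(-1)$ (the parity of a character is unchanged on passing to an induced character). Consequently the main term of $g(\alpha)-\chi(-1)g(-\alpha)$ carries the factor $1-\chi(-1)\xi(-1)$, which vanishes exactly when $\chi$ and $\xi$ share parity. Collecting the estimates gives $\sup_\alpha|g(\alpha)-\chi(-1)g(-\alpha)|\ll(1-\chi(-1)\xi(-1))\tfrac{\sqrt m}{\varphi(m)}(\log Q)e^{-\m(\chi\overline\xi;Q,\log^2 Q)}+(\log Q)^{2/3+o(1)}$; multiplying by $\sqrt q$ and absorbing the $O(\log q)$ from P\'olya's expansion yields Theorem \ref{SharpThm21}.

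I expect the main obstacle to be the two-sided bookkeeping in the last step: one must check that after Lemma \ref{alphatoratl} the non-main contributions of $g(\alpha)$ and $g(-\alpha)$ are bounded with \emph{no} reference to $b$ (so they cannot conspire with the main term), while the main contributions transform under $b\mapsto-b$ by precisely the scalar $\xi(-1)$; note that invoking Theorem \ref{MajorArcsThm} as a black box on $g(\pm\alpha)$ separately would only recover the weaker bound without the factor $1-\chi(-1)\xi(-1)$, so its proof genuinely has to be reopened. A secondary wrinkle is that Lemma \ref{alphatoratl} may produce different truncation points for $g(\alpha)$ and $g(-\alpha)$, so one should arrange a common truncation or verify the discrepancy is absorbed into the $O(\log\log Q)$ term. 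Everything else is a faithful transcription of the arguments already developed for Theorems \ref{HybridBoundThm} and \ref{MajorArcsThm}.
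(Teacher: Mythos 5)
Your proposal follows essentially the same route as the paper: P\'olya's expansion with $N=q$, Proposition \ref{GRHprop} to restrict to $\s(Q)$, the decomposition into $g(\alpha)-\chi(-1)g(-\alpha)$, Lemma \ref{alphatoratl} plus the Granville--Soundararajan identity on the major arcs, and the observation that $b\mapsto -b$ scales the exceptional contribution by $\psi(-1)=\xi(-1)$, producing the factor $1-\chi(-1)\xi(-1)$ (the paper simply merges the two identities into one with the weight $1-\chi(-1)\psi(-1)$ inside). The one point you do not address is $\alpha=0$, which always occurs in P\'olya's expansion and forces $b=0$, where Proposition \ref{GSID} is inapplicable; the paper handles it separately and directly via Corollary \ref{HalMVTenCor}, splitting into $\xi=\mathbf{1}$ (where $\xi(-1)=1$ and the prefactor $1-\chi(-1)$ matches the claimed main term) and $\xi\neq\mathbf{1}$ (where the repulsion bound gives $(\log Q)^{2/3+o(1)}$). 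Also, your worry about mismatched truncation points is moot: $N=\min\{x,1/|r\alpha-b|\}$ is invariant under $(\alpha,b)\mapsto(-\alpha,-b)$.
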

\vspace{-0.25in}
{\sc Remark}:
This refines the main term and sharpens the error term of Theorems 2.1 and 2.4 from Granville and Soundararajan's paper \cite{GSPretentiousCharactersAndPV}.

To conclude the proof of Theorem 2, it remains only to show that given any primitive Dirichlet character $\chimodq$ of odd order, and any primitive character $\xi$ of small conductor and opposite parity, $\chi(n)$ cannot mimic too closely the behavior of $\xi(n) \, n^{it}$ for small $t$. This is reminiscent of Lemma 3.2 of \cite{GSPretentiousCharactersAndPV}, wherein Granville and Soundararajan proved the same statement in the special case that $t=0$. Unfortunately, their argument does not generalize easily, and we are forced to introduce several new ingredients. These are discussed at the beginning of Section \ref{SectLowerBoundOnDistance}, in which we will prove the following:
\begin{theorem}
\label{GeneralizeLemma32}
Given $y\geq 3$, $\chimodq$ a primitive character of odd order $g$, and any odd character $\xi \, (\modulo m)$ with $m < (\log y)^A$. Then
\[
\m(\chi \overline{\xi}; \, y, \, \log^2 y) \geq \bigl(\delta_g + o(1)\bigr) \log \log y
\]
where $o(1) \to 0$ as $y \to \infty$ for any fixed values of $g$ and $A$.
\end{theorem}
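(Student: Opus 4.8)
Since $\m(\chi\overline{\xi};\,y,\,\log^{2}y)=\min_{|t|\le\log^{2}y}\D(\chi\overline{\xi},\,n^{it};\,y)^{2}$, it is enough to show that
\[
\sum_{p\le y}\frac{1-\mathrm{Re}\,\chi(p)\overline{\xi(p)}\,p^{-it}}{p}\ \ge\ \big(\delta_{g}+o(1)\big)\log\log y
\]
uniformly for $|t|\le\log^{2}y$, with $o(1)\to0$ (for fixed $g$ and $A$) as $y\to\infty$. Two elementary facts drive the argument. First, since $\chi$ has order $g$, $\chi(p)$ is a $g$th root of unity for every $p\nmid q$. Second, since $\xi$ is odd its order $e$ is even, so $N:=e/\gcd(e,g)$ is even (because $g$, hence $\gcd(e,g)$, is odd). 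The key pointwise estimate is this: writing $e(\gamma_{p}):=\overline{\xi(p)}\,p^{-it}$ and letting $\|\beta\|_{g}$ denote the distance from $\beta$ to the nearest multiple of $1/g$, we have
\[
\mathrm{Re}\,\chi(p)\overline{\xi(p)}\,p^{-it}\ \le\ h(\gamma_{p}),\qquad h(\beta):=\cos\!\big(2\pi\|\beta\|_{g}\big),
\]
for all $p$: when $p\nmid q$ this says that a $g$th root of unity times $e(\gamma_{p})$ has real part at most $\cos\!\big(2\pi\,\mathrm{dist}(\gamma_{p},\tfrac1g\Z)\big)$, and when $p\mid q$ the left side is $0$ while $h\ge\cos(\pi/g)>0$. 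Discarding the primes $p\mid m$ (whose terms are nonnegative) and using $\sum_{p\le y,\, p\nmid m}1/p=\log\log y+O(1)$ together with $\delta_{g}=1-\tfrac g\pi\sin\tfrac\pi g$, the claim reduces to
\[
\sum_{\substack{p\le y\\ p\nmid m}}\frac{h(\gamma_{p})}{p}\ \le\ \Big(\tfrac g\pi\sin\tfrac\pi g+o(1)\Big)\log\log y .
\]

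The plan is to expand the $\tfrac1g$-periodic even function $h$ in a Fourier series and reduce to well-understood prime sums. A short computation gives
\[
h(\beta)=\sum_{k\in\Z}a_{k}\,e(gk\beta),\qquad a_{k}=\frac{(-1)^{k+1}\,g\sin(\pi/g)}{\pi\,(g^{2}k^{2}-1)},
\]
so that $a_{0}=\tfrac g\pi\sin\tfrac\pi g$, the coefficient $a_{k}$ is negative for every nonzero \emph{even} $k$, and $\sum_{k}|a_{k}|<\infty$. Since $e(gk\gamma_{p})=\overline{\xi^{gk}(p)}\,p^{-igkt}$, interchanging the absolutely convergent $k$-sum with the finite $p$-sum yields
\[
\sum_{\substack{p\le y\\ p\nmid m}}\frac{h(\gamma_{p})}{p}=a_{0}\big(\log\log y+O(1)\big)+\sum_{k\ne0}a_{k}\sum_{\substack{p\le y\\ p\nmid m}}\frac{\overline{\xi^{gk}(p)}\,p^{-igkt}}{p},
\]
so it remains to bound the inner sums by $o(\log\log y)$ after summation against the $a_{k}$. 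The character $\xi^{gk}$ is principal exactly when $N\mid k$. If $N\nmid k$ then $\xi^{gk}$ is a nonprincipal character of conductor at most $m<(\log y)^{A}$, and $\sum_{p\le y}\overline{\xi^{gk}(p)}p^{-igkt}/p=\log L\big(1+igkt,\overline{\xi^{gk}}\big)+O(1)\ll\log\log\!\big(m(g|k|\log^{2}y+2)\big)$, which is $o(\log\log y)$; summing against $|a_{k}|\ll k^{-2}$ (with a harmless truncation for large $k$) this contributes $o(\log\log y)$. If $N\mid k$ then $k$ is even, so $a_{k}<0$, and $\mathrm{Re}\sum_{p\le y,\, p\nmid m}p^{-igkt}/p$ is $\le\log\log y+O(1)$ always, equal to $\log\log y+O(1)$ when $t=0$, and $\ge-O\!\big(\log\log(g|k|\log^{2}y+2)\big)$ when $t\ne0$ (by Mertens and $|\zeta(1+i\tau)|\gg(\log(|\tau|+2))^{-1}$); multiplying by the negative $a_{k}$ and summing, this part is $\le O_{g}(1)$ if $t=0$ and $\le o(\log\log y)$ in general. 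Combining the two cases proves the reduced inequality, hence the theorem.

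The one genuine difficulty — and the point at which the argument must go beyond Granville--Soundararajan's Lemma~3.2 (the case $t=0$) — is a possible Siegel zero. The only real nonprincipal character among the $\xi^{gk}$ is the unique order-$2$ element $\psi_{2}:=\xi^{gN/2}$, which arises for $k\equiv N/2\pmod N$; these indices lie among the \emph{even} ones (where $a_{k}<0$) precisely when $4\mid N$, and there I need a \emph{lower} bound for $\mathrm{Re}\sum_{p}\overline{\psi_{2}(p)}p^{-igkt}/p=\log|L(1+igkt,\psi_{2})|+O(1)$ rather than the unconditional upper bound used above. If $\psi_{2}$ has an exceptional zero this could be as negative as $-\tfrac A2\log\log y$, which would destroy the estimate. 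To control it I would fix $\eta>0$, apply Siegel's theorem with $\varepsilon=\eta/(2A)$ to get $|L(1,\psi_{2})|\gg_{\eta,A}(\mathrm{cond}\,\psi_{2})^{-\eta/(2A)}\ge(\log y)^{-\eta/2}$, transfer this lower bound from $s=1$ to $s=1+igkt$ across the range $|gkt|\le g|k|\log^{2}y$ by a standard comparison of $L$ with its derivative, and conclude $\log|L(1+igkt,\psi_{2})|\ge-\tfrac\eta2\log\log y-o_{\eta}(\log\log y)$. Since $\sum_{k\equiv N/2\,(N)}|a_{k}|=O_{g}(1)$, the contribution of this single character is then at most $O_{g}(\eta)\log\log y+o_{\eta}(\log\log y)$; letting $\eta\downarrow0$ — legitimate, since the error term in the theorem is allowed to depend on $g$ and $A$ — finishes the proof. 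The hardest step is exactly this: keeping the exceptional-zero loss $o(\log\log y)$ while the conductor $m$ is permitted to grow like a power of $\log y$, and doing so uniformly over all $|t|\le\log^{2}y$.
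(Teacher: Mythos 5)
Your proposal is correct in outline, but it takes a genuinely different route from the paper's. The paper also begins by replacing $\chi(p)$ with the worst-case element of $\mu_g\cup\{0\}$, but then works in ``physical space'': it partitions $[2,y]$ into short intervals $(x,(1+\delta)x]$ with $\delta\asymp(\log y)^{-3}$, freezes $p^{-it}\approx x^{-it}$ on each, uses Siegel--Walfisz to equidistribute $\xi(p)$ among the $k$-th roots of unity over the primes of each interval, evaluates the resulting discrete minimization exactly (Lemma \ref{summin}, producing $F_{gk^*}$), and finally bounds a logarithmically weighted Riemann sum of $F_{gk^*}$ via its concavity. You instead work on the Fourier side, reducing everything to the prime sums attached to the powers $\xi^{gk}$ twisted by $p^{-igkt}$, controlled by $L$-function estimates near the $1$-line. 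The two structural points that make your version work are both right: $a_0=\frac g\pi\sin\frac\pi g$ recovers exactly $\delta_g$, and the only modes whose prime sum can be as large as $\log\log y$ are those with $\xi^{gk}$ principal, which forces $N\mid k$ with $N$ even, hence $a_k<0$, so they can only help. You have also correctly isolated the genuinely new difficulty relative to Granville--Soundararajan's $t=0$ argument --- the possible exceptional zero of the unique real nonprincipal $\xi^{gk}$ --- and the device of applying Siegel's theorem with $\varepsilon=\eta/(2A)$ and letting $\eta\downarrow0$ legitimately absorbs it into the $o(1)$; the resulting ineffectivity is shared by the paper, whose Lemma \ref{SWapp} also rests on Siegel--Walfisz. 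The trade-off: the paper needs only primes in progressions $(\modulo m)$ plus elementary trigonometry and a concavity argument, while you need two-sided bounds on $\log|L|$ for all the characters $\xi^{gk}$ uniformly in $|\tau|\le g|k|\log^2 y$ (zero-free region plus the exceptional-zero discussion), but in exchange you dispense with the short-interval decomposition and the exact evaluation of Lemma \ref{summin}.

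Two details should be tightened, though neither is a gap in the strategy. First, the identity relating $\sum_{p\le y}\overline{\xi^{gk}(p)}p^{-igkt}/p$ to $\log L$ should be stated at $s=1+\frac1{\log y}+igkt$, and the $O(1)$ there (the tail $p>y$) itself invokes the prime number theorem for $\xi^{gk}$; this is safe because $m<(\log y)^A$ forces $(1-\beta_1)\log y\to\infty$ even for a Landau--Page exceptional zero, but it deserves a sentence. Second, the ``transfer'' of the Siegel bound from $s=1$ to $s=1+igkt$ is most cleanly done via the standard estimate $\log|L(1+\frac1{\log y}+i\tau,\psi_2)|\ge\log(1-\beta_1)-O\bigl(\log\log(m(|\tau|+2))\bigr)$ rather than by comparing $L$ with its derivative.
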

\vspace{-0.25in}
Using the bound from Theorem \ref{GeneralizeLemma32} in that of Theorem \ref{SharpThm21}, we deduce Theorem 2.

We conclude the paper with a proof of Theorem 3, which shows that conditionally on the GRH, our bound on odd-order character sums is best possible.

This concludes our outline. We summarize it, more briefly, before carrying out the arguments sketched above. Section \ref{SectMinorArcs} builds on the work of Montgomery and Vaughan estimating the minor arc contributions to the exponential sum $\sum \frac{f(n)}{n} \, e(n \alpha)$, culminating in Corollary \ref{MinorArcsCorollary}. In Section \ref{SectGSIDandReductionToRationals} we prove two elementary results which inform the rest of our arguments: Lemma \ref{alphatoratl} shows that it suffices to consider the case of rational $\alpha$, and an identity of Granville and Soundararajan further reduces the problem to considering a sum of a type previously investigated by Montgomery and Vaughan. In Section \ref{SectHalaszTypeBound} we
apply Tenenbaum's method to Montgomery and Vaughan's bound to obtain Corollary \ref{HalMVTenCor}, a variation on the Hal\'{a}sz-Montgomery-Tenenbaum bound for mean values of multiplicative functions. This puts us in the position to treat the major arcs and prove Theorem \ref{MajorArcsThm}, which we do in Section \ref{SectMajorArcs}. In Section \ref{SectCharacterSumBound} we combine the major arc and minor arc estimates to obtain Theorem \ref{HybridBoundThm}, and subsequently deduce the bound on character sums given by Theorem \ref{SharpThm21}. In Section \ref{SectLowerBoundOnDistance}, we show that a primitive character of odd order cannot mimic too closely any function of the form $\xi(n) n^{it}$, where $\xi$ is a character of even order and small conductor; this is Theorem \ref{GeneralizeLemma32}. Finally, in Section \ref{SectCharLowerBd}, we prove Theorem 3.

\section{The minor arc case: proof of Corollary \ref{MinorArcsCorollary}}
\label{SectMinorArcs}

We begin by recalling a result of Montgomery and Vaughan:
\begin{theorem}[Montgomery-Vaughan]\label{MV1977GSFormulation}
Suppose $f \in \F$ and $|\alpha - b / r| \leq \frac{1}{r^2}$ with $(b,r)=1$. Then for every $R \in [2,r]$ and any $N \geq Rr$ we have
$$
\sum_{Rr \leq n \leq N} \frac{f(n)}{n} \, e(n\alpha) \ll
\log \log N + \frac{(\log R)^{3/2}}{\sqrt{R}} \, \log N
$$
where the implicit constant is absolute.
\end{theorem}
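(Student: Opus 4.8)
This statement is Montgomery and Vaughan's theorem \cite{MVExpSumsWMultCoeff} (in the formulation used by Granville and Soundararajan \cite{GSPretentiousCharactersAndPV}), recast with a $1/n$ weight and a lower cutoff at $Rr$; the plan is therefore simply to deduce it from their unweighted estimate by partial summation. I will take as given the deep bound at the core of \cite{MVExpSumsWMultCoeff}: for $f\in\F$ (indeed for any multiplicative $f$ with $|f|\le 1$), $(b,r)=1$, $|\alpha-b/r|\le 1/r^2$, and $2\le R\le r$, one has
$$
\Bigl|\,\sum_{n\le t} f(n)\,e(n\alpha)\,\Bigr| \;\ll\; \frac{t\,(\log R)^{3/2}}{\sqrt R} \;+\; \frac{t}{\log t}
$$
uniformly for $t\ge Rr$, with an absolute implied constant. (This is obtained in \cite{MVExpSumsWMultCoeff} by reducing the sum to a bilinear form and estimating that form by an iterated version of Dirichlet's hyperbola method.)

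First I would apply partial summation to introduce the weight. Writing $A(t)=\sum_{n\le t}f(n)e(n\alpha)$, one gets
$$
\sum_{Rr\le n\le N}\frac{f(n)}{n}\,e(n\alpha)
\;=\;\frac{A(N)}{N}-\frac{A(Rr-1)}{Rr}+\int_{Rr}^{N}\frac{A(t)}{t^2}\dx t,
$$
since the two boundary contributions attached to the cutoff collapse to $-A(Rr-1)/(Rr)$, whose modulus is $\le 1$. Next I would insert the Montgomery--Vaughan bound, which is legitimate for every $t\in[Rr,N]$ because $R$ and $r$ are fixed and the only hypothesis on $t$, namely $t\ge Rr$, only relaxes as $t$ grows. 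Then the boundary term is $\ll (\log R)^{3/2}R^{-1/2}+(\log N)^{-1}\ll 1+(\log R)^{3/2}R^{-1/2}$; the first part of the integrand contributes $\frac{(\log R)^{3/2}}{\sqrt R}\int_{Rr}^N\frac{\dx t}{t}\ll\frac{(\log R)^{3/2}}{\sqrt R}\log N$; and the second part contributes $\int_{Rr}^N\frac{\dx t}{t\log t}=\log\log N-\log\log(Rr)\le\log\log N$ --- this integral being exactly the source of the $\log\log N$ in the conclusion. Since $Rr\ge 4$ we have $1\ll\log\log N$, and collecting everything yields the asserted bound.

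I do not foresee any genuine obstacle beyond invoking the Montgomery--Vaughan estimate itself; what remains is bookkeeping. The two points that merit a little care are: (i) that their hypotheses hold uniformly along the whole path of integration, which they do by the monotonicity noted above; and (ii) that one uses a form of their error term no larger than $\ll t/\log t$, so that it integrates against $\dx t/t$ to produce a single clean $\log\log N$. A materially weaker input --- say an error term of size $t(\log\log t/\log t)^{1/2}$ --- would integrate to something of order $(\log N)^{1/2}$ and would be useless for the minor-arc applications in the next section.
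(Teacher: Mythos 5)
Your proposal is correct and follows exactly the route the paper takes: the paper's entire proof is the remark that the theorem "follows immediately from Corollary 1 of \cite{MVExpSumsWMultCoeff} by partial summation," which is precisely the computation you carry out. The bookkeeping (boundary terms $\ll 1 \ll \log\log N$, the $t/\log t$ error integrating to $\log\log N$, and the uniformity of the Montgomery--Vaughan hypothesis along $[Rr,N]$) is all as it should be.
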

\vspace{-0.25in}
\begin{proof}
This follows immediately from Corollary 1 of \cite{MVExpSumsWMultCoeff} by partial summation; our formulation of this theorem is lifted from Lemma 4.2 of \cite{GSPretentiousCharactersAndPV}.
\end{proof}
Montgomery and Vaughan's proof of the above theorem required both ingenuity and hard analysis, as might be expected in a minor arc estimate. With their result in hand, we can deduce the following corollary (which is modeled on Lemma 6.1 of \cite{GSPretentiousCharactersAndPV}) without much exertion.

\begin{MinorArcsCorollary}
Given $f\in\F$, $\alpha \in \R$, and a reduced fraction $\frac{b}{r}$ such that $r \geq 2$ and ${\displaystyle \left|\alpha - \frac{b}{r}\right| \leq \frac{1}{r^2}}$. Then for ${x \geq 2}$ and ${y \geq 16}$,
$$
\mathop{\sum_{n \leq x}}_{n \in \s(y)}
\frac{f(n)}{n} e(n\alpha) \ll
\log r + \frac{(\log r)^{5/2}}{\sqrt{r}} \log y + \log \log y
$$
where the implicit constant is absolute.
\end{MinorArcsCorollary}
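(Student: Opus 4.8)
The plan is to bring the sum into the form handled by Theorem \ref{MV1977GSFormulation}, in three steps: absorb the smoothness condition into the multiplicative function, truncate the summation range at a suitable power of $y$, and then apply Theorem \ref{MV1977GSFormulation} with $R=r$. For the first step, define $g\in\F$ by $g(p)=f(p)$ for primes $p\leq y$ and $g(p)=0$ for $p>y$, extended completely multiplicatively; then $|g(n)|\leq 1$, while $g(n)=f(n)$ for $n\in\s(y)$ and $g(n)=0$ otherwise, so that
$$
\mathop{\sum_{n\leq x}}_{n\in\s(y)}\frac{f(n)}{n}\,e(n\alpha)=\sum_{n\leq x}\frac{g(n)}{n}\,e(n\alpha),
$$
and it suffices to bound the right-hand side for an arbitrary $g\in\F$ supported on $y$-smooth integers.

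For the truncation, set $z:=y^{1+\log r}$; since $y\geq 16$ a short computation gives $z\geq r^2$. The terms $n>z$ contribute at most $\sum_{n>z,\,n\in\s(y)}1/n$ in absolute value, which I would bound by a standard estimate on sums of reciprocals of smooth numbers: writing $u=\log z/\log y=1+\log r$ and letting $\rho$ denote Dickman's function, one has $\sum_{n>z,\,n\in\s(y)}1/n\ll(\log y)\,\rho(u-1)$, and a brief case split in $r$ shows the right-hand side is $\ll\log r+\frac{(\log r)^{5/2}}{\sqrt r}\log y+\log\log y$ --- for bounded $r$ it is $\ll\log y$, absorbed because $(\log r)^{5/2}/\sqrt r$ is then bounded below, while for large $r$ the super-exponential decay of $\rho$ makes it $\ll 1$. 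This reduces matters to estimating $\sum_{n\leq N}\frac{g(n)}{n}\,e(n\alpha)$ with $N:=\min(x,z)$.

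If $N<r^2$ the trivial bound $\sum_{n\leq N}1/n\ll\log N<2\log r$ suffices. Otherwise $N\geq r^2=Rr$ with $R:=r$, so I split off the segment $n<r^2$, which contributes $O(\log r)$, and apply Theorem \ref{MV1977GSFormulation} to $g$ on $r^2\leq n\leq N$ to get
$$
\sum_{r^2\leq n\leq N}\frac{g(n)}{n}\,e(n\alpha)\ll\log\log N+\frac{(\log r)^{3/2}}{\sqrt r}\,\log N\ll\log\log y+\log r+\frac{(\log r)^{3/2}(1+\log r)}{\sqrt r}\,\log y,
$$
since $\log N\leq\log z=(1+\log r)\log y$ and $\log\log N\leq\log\log z\ll\log r+\log\log y$. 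Collecting the pieces and using $(\log r)^{3/2}(1+\log r)/\sqrt r\ll(\log r)^{5/2}/\sqrt r$ for all $r\geq 2$ (with an absolute constant; the case $r=2$ is trivial as both sides are $\Theta(1)$) yields the asserted bound with an absolute implied constant.

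I expect the one genuinely delicate point to be the choice of cutoff in the truncation. It must be a power of $y$ that grows with $r$: a fixed choice such as $z=y^2$ leaves a tail of size $\asymp\log y$, which is too large precisely in the range $r\asymp(\log y)^2$ where the target bound is smallest (of order $(\log\log y)^{5/2}$). This scaling of $z$ is also the source of the $(\log r)^{5/2}$ appearing in the corollary in place of the $(\log r)^{3/2}$ of Theorem \ref{MV1977GSFormulation}: the extra factor of $\log r$ enters through $\log z=(1+\log r)\log y$.
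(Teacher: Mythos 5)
Your argument is essentially the paper's: smooth $f$ to $f_y\in\F$, truncate at a height of the form $y^{O(\log r)}$, split off $n<r^2$, and apply Theorem \ref{MV1977GSFormulation} with $R=r$; the final bookkeeping ($\log N\leq(1+\log r)\log y$, hence the exponent $5/2$) matches the paper exactly. The one place you diverge is the tail $n>z$, $n\in\s(y)$: the paper disposes of it by Rankin's trick, writing $\frac{1}{n}\leq\frac{1}{r}\,n^{-(1-1/\log y)}$ for $n>y^{\log r}$ and bounding the resulting Euler product by $O(\log y)$, which gives the clean and self-contained estimate $\frac{1}{r}\log y$. You instead invoke a Dickman-function estimate $\ll(\log y)\,\rho(\log r)$; that is legitimate but heavier, and your verification that it fits under the target bound is imprecise in the large-$r$ case: the tail is \emph{not} $\ll 1$ uniformly, since $y$ may be arbitrarily large compared to $r$ (e.g.\ $\log\log y\asymp(\log r)^2$ makes $(\log y)\rho(\log r)$ enormous). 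What is true, and what you should say, is that $\rho(\log r)\ll(\log r)^{5/2}/\sqrt{r}$ for all $r\geq 2$ (super-exponential decay of $\rho$ beats $e^{(\log r)/2}$), so the tail is always dominated by the middle term $\frac{(\log r)^{5/2}}{\sqrt{r}}\log y$. With that substitution your proof is complete; the paper's Rankin-trick version avoids the issue entirely and needs no case split.
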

\vspace{-0.25in}
Prior to proving this, we introduce one more piece of notation. Given $f : \Z \to \C$ and any positive number $y$, we define the $y$-smoothed function $f_y$: \label{y-smoothedDefn}
$$
f_y(n) =
\begin{cases}
f(n) \quad & \text{if } n \in \s(y) \\
0 \quad & \text{otherwise.}
\end{cases}
$$
Note that if $f \in \F$, then $f_y \in \F$ as well.

\begin{proof}
The bound is trivially true for $x \leq r^2$, so we assume $x > r^2$.

First, note that for $x \leq y^{\log r}$ the claim follows from Theorem \ref{MV1977GSFormulation} applied to $f_y$:
\begin{eqnarray*}
\mathop{\sum_{n \leq x}}_{n \in \s(y)} \frac{f(n)}{n} e(n\alpha)
& = &
\sum_{n \leq x} \frac{f_y(n)}{n} e(n\alpha) \\
& = &
\sum_{n < r^2} \frac{f_y(n)}{n} e(n\alpha) +
\sum_{r^2 \leq n \leq x} \frac{f_y(n)}{n} e(n\alpha) \\
& \ll &
\log r + \frac{(\log r)^{3/2}}{\sqrt{r}} \, \log x + \log \log x \\
& \ll &
\log r + \frac{(\log r)^{5/2}}{\sqrt{r}} \log y + \log \log y .
\end{eqnarray*}

It therefore suffices to bound
\[
\mathop{\sum_{y^{\log r} < n \leq x}}_{n \in \s(y)} \frac{f(n)}{n} e(n\alpha).
\]
Since $n > y^{\log r}$ if and only if $n > r \cdot n^{1 - \frac{1}{\log y}}$,
\begin{eqnarray*}
\mathop{\sum_{y^{\log r} < n \leq x}}_{n \in \s(y)} \frac{f(n)}{n} e(n\alpha)
& \ll &
\frac{1}{r}
\mathop{\sum_{y^{\log r} < n \leq x}}_{n \in \s(y)} \frac{1}{n^{1 - \frac{1}{\log y}}} \\
& \leq &
\frac{1}{r}
\prod_{p \leq y} \left(1 - \frac{1}{p^{1 - \frac{1}{\log y}}}\right)^{-1} .
\end{eqnarray*}
By the Prime Number Theorem,
$$
\log \; \prod_{p \leq y} \left(1 - \frac{1}{p^{1 - \frac{1}{\log y}}}\right)^{-1}
=
\sum_{p \leq y} \frac{1}{p^{1 - \frac{1}{\log y}}} + O(1)
=
\log \log y + O(1) .
$$
It follows that
$$
\mathop{\sum_{y^{\log r} < n \leq x}}_{n \in \s(y)} \frac{f(n)}{n} e(n\alpha)
\ll \frac{1}{r} \, \log y
$$
and the Corollary is proved.
\end{proof}

\section{Reduction to rational $\alpha$ and the Granville-Soundararajan identity}
\label{SectGSIDandReductionToRationals}

We now begin our approach towards the major arcs. We begin by reducing the problem to the case of rational $\alpha$. The following bound is inspired by Lemma 6.2 of \cite{GSPretentiousCharactersAndPV}:
\begin{lemma}
\label{alphatoratl}
Given $f \in \F$, $\alpha \in \R$, $x \geq 16$, $y \geq 16$ and $M \geq 2$. Suppose the reduced fraction $\frac{b}{r}$ with $r \leq M$ is a rational Diophantine approximation to $\alpha$, i.e.
$$
\left|\alpha - \frac{b}{r}\right| \leq \frac{1}{rM} .
$$
Set $N = \min\left\{x, \frac{1}{|r\alpha - b|}\right\}$. Then for all
$R \in \left[2, \frac{N}{2}\right]$,
$$
\mathop{\sum_{n \leq x}}_{n \in \s(y)} \frac{f(n)}{n} \, e(n\alpha) =
\mathop{\sum_{n \leq N}}_{n \in \s(y)} \frac{f(n)}{n} \, e\left(\frac{b}{r}\, n\right) +
O\left(\log R + \frac{(\log R)^{3/2}}{\sqrt{R}} \, (\log y)^2 + \log \log y \right)
$$
where the implied constant in the error term is absolute.
Moreover, if $M \geq 2(\log y)^4 \log \log y$, the error term above can be replaced by $O(\log \log y)$.
\end{lemma}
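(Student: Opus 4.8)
The plan is to interpolate between $e(n\alpha)$ and $e(nb/r)$ via partial summation, treating the range $n \leq N$ (where $N = \min\{x, 1/|r\alpha-b|\}$) and the tail $N < n \leq x$ separately. On the range $n \leq N$, write $\alpha = \tfrac{b}{r} + \beta$ with $|\beta| = |r\alpha-b|/r \leq 1/(rN)$, so that $e(n\alpha) = e(nb/r)\,e(n\beta)$ and, crucially, $n|\beta| \leq N|\beta| \leq 1/r \leq 1$ throughout. Thus $e(n\beta) = 1 + O(n|\beta|)$, and the replacement of $e(n\alpha)$ by $e(nb/r)$ on this range costs at most
\[
\mathop{\sum_{n \leq N}}_{n \in \s(y)} \frac{|f(n)|}{n} \cdot n|\beta|
\ll |\beta| \cdot N \cdot \#\{n \leq N : n \in \s(y)\}/N,
\]
which is crude; better is to do partial summation using the Montgomery--Vaughan bound (Theorem \ref{MV1977GSFormulation}, applied to $f_y$) on the partial sums $\sum_{m \leq n} f_y(m)/m \cdot e(mb/r)$, combined with $e(n\beta)$ having derivative $O(|\beta|)$ and total variation $O(N|\beta|) = O(1/r) = O(1)$ over $[1,N]$. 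That gives an error of the shape $O\bigl(\log R + R^{-1/2}(\log R)^{3/2}(\log y)^2 + \log\log y\bigr)$, matching the claimed first (unconditional) error term; here one should be careful that Theorem \ref{MV1977GSFormulation} requires $|\alpha - b/r| \le 1/r^2$, which holds since $1/(rM) \le 1/r^2$ when $r \le M$, and that the free parameter $R$ enters through splitting off the initial segment $n < Rr$ exactly as in the proof of Corollary \ref{MinorArcsCorollary}.

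For the tail $N < n \leq x$ (nonempty only when $N = 1/|r\alpha-b| < x$), the point is that here $n|\alpha - b/r| > 1/r$, so $n$ has a rational approximation with denominator $\leq r$ but we are beyond the "linear" regime; instead we bound this tail directly. Since $n > N \geq 1/|r\alpha - b|$, we have $|\alpha - b/r| > 1/(rn)$, so in fact $r\alpha - b$ is not tiny and one can run the trivial $1/n$ bound after summation by parts against the oscillating factor $e(n\alpha)$, or — cleaner — apply Corollary \ref{MinorArcsCorollary} itself to the sum $\sum_{n \leq x} f_y(n)/n\, e(n\alpha)$ and to $\sum_{n \leq N} f_y(n)/n\, e(n\alpha)$ and subtract, noting that $\alpha$ still admits the approximation $b/r$ with $|\alpha - b/r| \le 1/r^2$. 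Either way the tail contributes $O\bigl(\log r + r^{-1/2}(\log r)^{5/2}\log y + \log\log y\bigr)$, which is absorbed into the stated error since $r \le M$ and $\log M \ll (\log y)^{1/2+o(1)}$ is dominated by the $(\log R)^{3/2} R^{-1/2}(\log y)^2$ term for suitable $R$ (and by $\log R$).

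Finally, for the "moreover" clause: if $M \geq 2(\log y)^4 \log\log y$ we are free to choose $R$ large. Take $R = (\log y)^4\log\log y$ (legitimate provided $R \leq N/2$, which one checks holds in the relevant regime, handling the small-$N$ case trivially as the bound is then $O(\log\log y)$ outright). Then $\log R \ll \log\log y$ and
\[
\frac{(\log R)^{3/2}}{\sqrt{R}}\,(\log y)^2 \ll \frac{(\log\log y)^{3/2}(\log\log y)^{3/2}}{(\log y)^2(\log\log y)^{1/2}}\,(\log y)^2 \ll (\log\log y)^{5/2},
\]
wait — this is not $O(\log\log y)$, so one instead picks $R$ a small power of $\log y$, say $R = \log y$: then $R^{-1/2}(\log R)^{3/2}(\log y)^2 = (\log\log y)^{3/2}(\log y)^{3/2}$, still too big. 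The correct choice is $R$ a large fixed power making $R^{-1/2}(\log y)^2$ bounded, i.e. $R \asymp (\log y)^4$; then $R^{-1/2}(\log R)^{3/2}(\log y)^2 \ll (\log\log y)^{3/2}$, and with the extra $\log\log y$ factor of room in the hypothesis $M \geq 2(\log y)^4\log\log y$ one pushes $R = (\log y)^4(\log\log y)^{3}$, giving $R^{-1/2}(\log R)^{3/2}(\log y)^2 \ll (\log\log y)^{3/2}/(\log\log y)^{3/2} = 1$ and $\log R \ll \log\log y$; the tail term $\log r + \dots \ll \log M \ll \log\log y$ as well. Hence the whole error is $O(\log\log y)$, as claimed.

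\textbf{Main obstacle.} The delicate point is the bookkeeping in the interpolation step on $n \le N$: one must combine the Montgomery--Vaughan estimate for the genuinely arithmetic sum $\sum f_y(n)/n\,e(nb/r)$ with the benign slow oscillation of $e(n\beta)$ via a careful partial summation, while tracking the parameter $R$ so that the $(\log y)^2$ (rather than $\log N$) appears — this is where the restriction to smooth $n$ and the substitution $N \le 1/|r\alpha-b|$ are both used. The tail and the parameter optimization for the "moreover" clause are comparatively routine once the first part is set up correctly.
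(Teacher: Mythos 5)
There is a genuine gap, and it sits exactly where the content of the lemma lies: the tail $N < n \leq x$. You propose to bound this tail either by partial summation against the oscillation of $e(n\alpha)$ (which cannot work, since $f_y$ has no smoothness and separating it from $e(n\alpha)$ is precisely the hard bilinear problem), or by applying Corollary \ref{MinorArcsCorollary} with the \emph{original} approximation $\frac{b}{r}$ and subtracting. The latter yields an error of size $\log r + r^{-1/2}(\log r)^{5/2}\log y + \log\log y$, which depends on the small denominator $r$; in the intended application (the major arcs, where $r$ may be as small as $2$) this is $\asymp \log y$, which is neither $O(\log\log y)$ nor dominated by $\log R + R^{-1/2}(\log R)^{3/2}(\log y)^2$ once $R$ is taken large. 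So your argument cannot deliver the ``moreover'' clause, and indeed cannot deliver the stated error term for large admissible $R$. The missing idea in the paper's proof is that when $N = 1/|r\alpha - b| < x$, Dirichlet's theorem produces a \emph{second} reduced approximation $\frac{b_1}{r_1}$ with $\frac{N}{2} \leq r_1 \leq 2N$; one then bounds $N < n \leq Rr_1$ trivially by $\log R + O(1)$, applies Theorem \ref{MV1977GSFormulation} to $f_y$ with this large-denominator approximation on $Rr_1 < n \leq e^{(\log y)^2}$ (this is where $\log R + R^{-1/2}(\log R)^{3/2}(\log y)^2$ comes from, with $R$ allowed up to $r_1 \geq N/2$), and disposes of $n > e^{(\log y)^2}$ using $y$-smoothness. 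Without this change of rational approximation the tail is out of reach.

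Two smaller points. On the head $n \leq N$ you already have everything you need: since $|\alpha - b/r| \leq 1/(rN)$, the direct comparison costs $\sum_{n \leq N} n\,|\alpha - b/r|\cdot n^{-1} \leq 1/r \leq 1$, i.e. $O(1)$; the partial-summation machinery you then invoke is unnecessary (and would reintroduce spurious error terms). And your parameter choice for the ``moreover'' clause goes astray through an arithmetic slip: with $R = (\log y)^4\log\log y$ one has $(\log R)^{3/2}R^{-1/2}(\log y)^2 \asymp (\log\log y)^{3/2}(\log\log y)^{-1/2} = \log\log y$, which already suffices; your final choice $R = (\log y)^4(\log\log y)^3$ may violate the constraint $R \leq N/2$ when $N$ is as small as $M$.
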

\vspace{-0.25in}
\textsc{Remarks:}
\begin{description}

\vspace{-0.25in}
\item[({\em i})] For our intended applications, we will be able to choose an $M$ much larger than $2(\log y)^4 \log \log y$.

\item[({\em ii})] The actual value of $N$ is unimportant; what is important is that $M \leq N \leq x$.
\end{description}
\begin{proof}
If $N = x$ then $\displaystyle \left|\alpha - \frac{b}{r}\right| \leq \frac{1}{rx}$ whence
$$
\mathop{\sum_{n \leq x}}_{n \in \s(y)} \frac{f(n)}{n}
\Biggl(e(n\alpha) - e\left(\frac{b}{r}\, n\right)\Biggr) \ll
\mathop{\sum_{n \leq x}}_{n \in \s(y)} \frac{1}{n} \cdot
n \left|\alpha - \frac{b}{r}\right| \ll 1  .
$$
We therefore assume that $N = \frac{1}{|r\alpha - b|} < x$. Note that this immediately implies that $N \geq M$ and that
$$
\left|\alpha - \frac{b}{r}\right| = \frac{1}{rN}  .
$$
By Dirichlet's theorem, there is a reduced fraction $\frac{b_1}{r_1}$ with $r_1 \leq 2N$ such that
$$
\left|\alpha - \frac{b_1}{r_1}\right| \leq \frac{1}{2 r_1 N}
$$
Note that $\frac{b}{r} \neq \frac{b_1}{r_1}$, since $\left|\alpha - \frac{b_1}{r_1}\right| < \frac{1}{r_1 N}$. Thus,
$$
\frac{1}{r r_1} \leq \left|\frac{b}{r} - \frac{b_1}{r_1}\right| \leq
\frac{1}{2 r_1 N} + \frac{1}{r N}
$$
whence $r_1 \geq N - \frac{r}{2}$. Since $r \leq M \leq N$, we see that
$$
\frac{N}{2} \leq r_1 \leq 2N
$$
so we can trivially bound the (possibly empty) sum
$$
\mathop{\sum_{N < n \leq R r_1}}_{n \in \s(y)} \frac{f(n)}{n} \, e(n\alpha)
\ll
\log \frac{R r_1}{N} = \log R + O(1) .
$$
Once again applying Montgomery-Vaughan's Theorem \ref{MV1977GSFormulation} to $f_y$ (which we can do since $R~\leq~\frac{N}{2}~\leq~r_1$) we see that
\begin{eqnarray*}
\mathop{\sum_{R r_1 < n \leq e^{(\log y)^2}}}_{n \in \s(y)} \frac{f(n)}{n} \, e(n\alpha)
&=&
\sum_{R r_1 < n \leq e^{(\log y)^2}} \frac{f_y(n)}{n} \, e(n\alpha) \\
& \ll &
\log \log y + \frac{(\log R)^{3/2}}{\sqrt{R}} \, (\log y)^2  .
\end{eqnarray*}
Finally, using the same device as in the proof of Corollary \ref{MinorArcsCorollary}, we see that
$$
\mathop{\sum_{e^{(\log y)^2} < n \leq x}}_{n \in \s(y)} \frac{f(n)}{n} \, e(n\alpha)
\ll
\mathop{\sum_{e^{(\log y)^2} < n \leq x}}_{n \in \s(y)} \frac{1}{n}
\ll
\frac{1}{y} \sum_{n \in \s(y)} \frac{1}{n^{1 - \frac{1}{\log y}}}
\ll
1  .
$$
Combining these three bounds, we deduce
$$
\mathop{\sum_{n \leq x}}_{n \in \s(y)} \frac{f(n)}{n} \, e(n\alpha) =
\mathop{\sum_{n \leq N}}_{n \in \s(y)} \frac{f(n)}{n} \, e(n\alpha) +
O\left(1 + \log R + \frac{(\log R)^{3/2}}{\sqrt{R}} \, (\log y)^2 + \log \log y \right) .
$$
Just as at the start of the proof, we have
$$
\mathop{\sum_{n \leq N}}_{n \in \s(y)} \frac{f(n)}{n} \, e(n\alpha) =
\mathop{\sum_{n \leq N}}_{n \in \s(y)} \frac{f(n)}{n} \, e\left(\frac{b}{r}\, n\right) + O(1)
$$
and we conclude the proof of the first part of the theorem.

For the second claim, if $M \geq 2(\log y)^4 \log \log y$, then
$$
r_1 \geq N - \frac{r}{2} \geq M - \frac{M}{2} \geq (\log y)^4 \log \log y .
$$
Taking $R = (\log y)^4 \log \log y$ renders the error $O(\log \log y)$.
\end{proof}

% Change \left and \right to \bigl, etc. remove \! stuff...
We now suppose we are in the case of rational $\alpha$. The following identity, essentially due to Granville and Soundararajan, highlights the key contributors to the major arcs.
\begin{GSID}[Granville-Soundararajan identity]
Given integers $b$ and $r$ such that $(b,r) = 1$ with $b \neq 0$ and $r \geq 1$. Then for all $f \in \F$, $N \geq 2$, and $y \geq 2$, we have
$$
\mathop{\sum_{n \leq N}}_{n \in \s(y)}
\frac{f(n)}{n} \, e\Bigl(\frac{b}{r} \, n\Bigr)
=
\mathop{\sum_{d|r}}_{d \in \s(y)}
\frac{f(d)}{d} \cdot \frac{1}{\varphi\left(\frac{r}{d}\right)}
\sum_{\psi \, \left(\modulo \frac{r}{d}\right)}
\tau(\psi) \, \overline{\psi}(b)
\left(\mathop{\sum_{n \leq N/d}}_{n \in \s(y)}
\frac{f(n)\overline{\psi}(n)}{n}\right)  .
$$
\end{GSID}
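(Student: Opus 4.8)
The plan is to expand the additive character $e(bn/r)$ into Dirichlet characters by means of Gauss sums, after first peeling off the part of $n$ which shares prime factors with $r$. The combinatorial heart of the matter is a unique factorization: every positive integer $n$ can be written in exactly one way as $n = dm$ with $d \mid r$ and $(m,\, r/d) = 1$, namely with $d = \prod_{p \mid r} p^{\min(v_p(n),\, v_p(r))}$. I would first verify this (a quick check of $p$-adic valuations at each prime $p\mid r$ shows $v_p(d)$ is forced to equal $\min(v_p(n),v_p(r))$), and then note that, since membership in $\s(y)$ is a condition on prime divisors, one has $n \in \s(y)$ if and only if both $d \in \s(y)$ and $m \in \s(y)$. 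Under the substitution $n = dm$ the range $n \leq N$ becomes $m \leq N/d$, complete multiplicativity gives $f(n) = f(d)\,f(m)$, and $e(bn/r) = e(bdm/r) = e\bigl(bm/(r/d)\bigr)$.

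The second ingredient is the classical separability identity for (possibly imprimitive) Dirichlet characters: for any integer $\ell \geq 1$ and any $a$ with $(a,\ell) = 1$,
$$
e\Bigl(\frac{a}{\ell}\Bigr) = \frac{1}{\varphi(\ell)} \sum_{\psi \, (\modulo \ell)} \tau(\psi)\, \overline{\psi}(a) .
$$
To prove it, observe that for $(a,\ell) = 1$ the substitution $c \mapsto a^{-1}c$ in the twisted Gauss sum gives $\sum_{c \,(\modulo \ell)} \psi(c)\, e(ac/\ell) = \overline{\psi}(a)\,\tau(\psi)$, where $\tau(\psi) = \sum_{c \,(\modulo \ell)} \psi(c)\, e(c/\ell)$; summing this over all $\psi \,(\modulo \ell)$ and invoking orthogonality of the characters recovers $\varphi(\ell)\, e(a/\ell)$ on the right. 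I would apply the identity with $\ell = r/d$ and $a = bm$: since $(b,r) = 1$ we have $(b,\, r/d) = 1$, and $(m,\, r/d) = 1$ by construction, so the hypothesis $(bm,\, r/d) = 1$ is satisfied. This coprimality is exactly what forced the preliminary factorization upon us — without stripping off $d$ one cannot invoke the Gauss-sum expansion.

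It then remains only to assemble the pieces. Splitting the left-hand sum according to $n = dm$ gives
$$
\mathop{\sum_{n \leq N}}_{n \in \s(y)} \frac{f(n)}{n}\, e\Bigl(\frac{bn}{r}\Bigr)
= \mathop{\sum_{d \mid r}}_{d \in \s(y)} \frac{f(d)}{d} \mathop{\sum_{m \leq N/d,\ (m,\, r/d) = 1}}_{m \in \s(y)} \frac{f(m)}{m}\, e\bigl(bm/(r/d)\bigr) ,
$$
after which I would replace $e\bigl(bm/(r/d)\bigr)$ by the character expansion above, factor $\overline{\psi}(bm) = \overline{\psi}(b)\,\overline{\psi}(m)$, and interchange the finite $\psi$-sum with the sum over $m$; since $\overline{\psi}(m) = 0$ whenever $(m,\, r/d) > 1$, the constraint $(m,\, r/d) = 1$ may be dropped at no cost, and what remains is precisely the right-hand side of the Proposition (with the inner summation variable renamed from $m$ to $n$). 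The only step requiring any thought is the unique factorization $n = dm$ and its compatibility with the $y$-smoothness constraint; everything after that is bookkeeping, and in particular no separate treatment of small moduli is needed, the modulus $r/d = 1$ (arising when $d = r$) being handled correctly by the general argument.
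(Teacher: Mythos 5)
Your proof is correct and follows essentially the same route as the paper: both decompose the sum according to $d = \gcd(n,r)$ (your valuation computation identifies exactly this $d$, and the paper phrases it as summing over the possible greatest common divisors of $n$ and $r$), and both then expand $e\bigl(bm/(r/d)\bigr)$ via Gauss sums and orthogonality of the characters $\bigl(\modulo \frac{r}{d}\bigr)$. The only cosmetic difference is that you isolate the separability identity as a standalone lemma proved by the substitution $c \mapsto a^{-1}c$, whereas the paper derives the same expansion inline by writing $e\bigl(ab/(r/d)\bigr)$ in terms of the indicator function $\delta_{ab}$ and switching the order of summation.
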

Thus for small $r$, the left hand side can be large only if $\displaystyle \mathop{\sum_{n \leq N/d}}_{n \in \s(y)} \frac{f(n)\overline{\psi}(n)}{n}$ is large for some Dirichlet character $\psi$ of conductor dividing $r$.
\begin{proof}
We examine the left hand side. Summing over all possible greatest common divisors $d$ of $n$ and $r$, and setting $a = n / d$ we find
\begin{equation}
\label{intermed}
\mathop{\sum_{n \leq N}}_{n \in \s(y)}
\frac{f(n)}{n} \, e\!\left(\frac{b}{r} \, n\right)
=
\mathop{\sum_{d|r}}_{d \in \s(y)}
\frac{f(d)}{d}
\mathop{\mathop{\sum_{a \leq \frac{N}{d}}}_{\left(a,\frac{r}{d}\right) = 1}}_{a \in \s(y)}
\frac{f(a)}{a} \, e\!\left(\frac{ab}{r/d} \right)  .
\end{equation}
Now,
$$
e\!\left(\frac{ab}{r/d} \right) =
\sum_{k \, \left(\modulo \frac{r}{d}\right)}
e\!\left(\frac{k}{r/d}\right) \, \delta_{ab}(k)
$$
where $\delta_x$ is the indicator function of $x$. By orthogonality of characters, we can express the indicator function in terms of characters:
$$
\delta_{ab}(k) = \frac{1}{\varphi\left(\frac{r}{d}\right)}
\sum_{\psi \, \left(\modulo \frac{r}{d}\right)} \overline{\psi}(ab) \, \psi(k)
$$
whence, switching the order of summation,
$$
e\!\left(\frac{ab}{r/d} \right) =
\frac{1}{\varphi\left(\frac{r}{d}\right)}
\sum_{\psi \, \left(\modulo \frac{r}{d}\right)} \tau(\psi) \, \overline{\psi}(ab)  .
$$
Plugging this back into (\ref{intermed}) and once again switching order of summation yields
the identity.
\end{proof}

\section{A Hal\'{a}sz-like result: proof of Theorem \ref{HalMVTenThm}}
\label{SectHalaszTypeBound}

Given $f \in \F$, set
$$
F(s) := \sum_{n = 1}^\infty \frac{f(n)}{n^s} .
$$
Note that this generating series converges in the halfplane Re $s > 1$.
\begin{theorem}[Montgomery-Vaughan \cite{MVMeanValsOfMultFns}]
\label{ThmMV2001}
For any $f \in \F$ and $x \geq 3$, we have
$$
\sum_{n \leq x} \frac{f(n)}{n} \ll
\frac{1}{\log x} \int_{\frac{1}{\log x}}^1 \frac{1}{\alpha} \, H(\alpha) \, d\alpha
$$
where
$$
H(\alpha) := \left( \sum_{k \in \Z}
\max_{s \in \B_k(\alpha)} \left| \frac{F(s)}{s-1} \right|^2 \right)^{1/2}
$$
and $\B_k(\alpha)$ is the region in the complex plane defined by
$$
\B_k(\alpha) := \left\{s \in \C : 1 + \alpha \leq \sigma \leq 2
\text{   and   }
|t - k| \leq \frac{1}{2} \right\} .
$$
\end{theorem}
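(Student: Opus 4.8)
This is a weighted version of the Halász--Montgomery estimate for $\sum_{n\le x}f(n)$, and the plan is to prove it by Montgomery's analytic method (cf.\ \cite{MontgomeryMittagLeffler} and \S III.4.3 of \cite{TenenbaumBook}), adapted to the present sum. Write $A(x):=\sum_{n\le x}f(n)/n$. Since $\sum_n(f(n)/n)n^{-s}=F(1+s)$ converges for $\Re s>0$, partial summation gives $F(1+s)/s=\int_1^\infty A(t)\,t^{-s-1}\,dt$ there, so by Perron inversion $A(x)=\frac{1}{2\pi i}\int_{(\alpha)}F(1+s)\,\frac{x^s}{s}\,ds$ (suitably interpreted), and --- crucially --- this holds for every $\alpha\in(0,1]$, since the only pole of the integrand is at $s=0$ and the horizontal segments contribute negligibly. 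For the estimates one first replaces $A(x)$ by a short weighted average of $A$ over a multiplicative window of bounded length; this is legitimate with error $O(1)$ because consecutive values of $A$ differ by $O(1/t)$, and it produces the representation $A(x)=\frac{1}{2\pi i}\int_{(\alpha)}F(1+s)\,K(s)\,x^s\,ds+O(1)$, where $K(s)$ has a simple pole at $s=0$ (so that $A(x)$ is genuinely of size up to $\log x$) but now decays like $(1+|s|)^{-2}$ on vertical lines --- this extra decay is what will let the off-central estimate close. The representation remains valid for all $\alpha\in(0,1]$.

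The outer shape of the bound is produced by averaging this one-parameter family. Multiplying the representation by $x^{-\alpha}\,d\alpha/\alpha$ and integrating over $\alpha\in[1/\log x,\,1]$ both cancels the factor $x^{\alpha}$ and, after the substitution $\beta=\alpha\log x$, leaves total mass $\int_1^{\log x}e^{-\beta}\,d\beta/\beta\asymp 1$; this is exactly what converts a single-contour bound into one of the form $\frac{1}{\log x}\int_{1/\log x}^1\frac{1}{\alpha}(\cdots)\,d\alpha$. On each vertical line $\Re s=\alpha$ one then cuts the $t$-integral into the unit-height segments $|t-k|\le\frac12$, and on the $k$-th segment bounds $|F(1+\alpha+it)|\le(|k|+1)\max_{s\in\B_k(\alpha)}|F(s)/(s-1)|$; for $k\ne 0$ the decay $|K(\alpha+it)|\ll(|k|+1)^{-2}$ more than compensates the factor $|k|+1$, so Cauchy--Schwarz in $k$ (using $\sum_k(|k|+1)^{-2}<\infty$) collects the off-central segments into a quantity $\ll H(\alpha)$, hence into $\ll\frac{1}{\log x}\int\alpha^{-1}H(\alpha)\,d\alpha$ with room to spare. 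The normalization by $s-1$ in the definition of $H$ is precisely what matches the kernel $K$ on these strips.

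The essential difficulty is the central segment $|t|\le\frac12$, where $F$ can be as large as $\zeta$ near $s=1$: bounding it by absolute values is lossy by a factor $\log x$, and recovering that factor is the technical heart of the argument. The remedy is to retain, rather than discard, the oscillation $x^{s}=e^{it\log x}$, which over a segment of bounded length is worth roughly $(\log x)^{-1}$ against a slowly-varying amplitude. Making this precise requires care, because near $s=1$ the amplitude $F(1+\alpha+it)$ is not slowly varying; one handles this either (i) by integration by parts in $s$, writing $x^{s}=(\log x)^{-1}\frac{d}{ds}x^{s}$, which trades the factor $(\log x)^{-1}$ for a derivative $F'(1+s)$ that is then bounded, via Cauchy's integral formula, by $\max|F|$ on a nearby line $\Re s=\alpha/2$ (the ensuing $1/\alpha$ being exactly what the measure $x^{-\alpha}\,d\alpha/\alpha$ absorbs), or (ii) by a stationary-phase analysis in which the only potential resonance between $F(1+\alpha+it)=\sum_n f(n)n^{-1-\alpha-it}$ and $e^{it\log x}$ comes from terms with $\log n\approx\log x$, which carry weight only $n^{-1}\asymp x^{-1}$ and are negligible. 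Carried through, this shows that after the $\alpha$-averaging the central segment contributes no more than the claimed $\frac{1}{\log x}\int_{1/\log x}^{1}\alpha^{-1}H(\alpha)\,d\alpha$; together with the off-central estimate and the $O(1)$ from the smoothing, this is the theorem. I expect this step --- the sharp treatment of the central segment, and in particular the bookkeeping that produces exactly the weight $(\alpha\log x)^{-1}$ rather than the naive $\alpha^{-2}$ --- to be the main obstacle; it is the delicate point of the Halász method, and the reason this estimate is a genuine theorem of Montgomery and Vaughan rather than a routine manipulation.
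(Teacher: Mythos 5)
The paper offers no proof of this statement to compare against: Theorem \ref{ThmMV2001} is quoted as a black box from Montgomery and Vaughan \cite{MVMeanValsOfMultFns}. Judged on its own terms, your sketch has the right scaffolding (Perron representation, a smoothed kernel with quadratic decay, cutting the vertical line into unit segments, Cauchy--Schwarz in $k$ against the normalization $F(s)/(s-1)$ --- which is indeed why $H(\alpha)$ has the shape it does), but it has a genuine gap exactly where the theorem's strength lies, namely the prefactor $\frac{1}{\log x}$. Your averaging device cannot produce it: the weight $x^{-\alpha}\,d\alpha/\alpha$ on $[1/\log x,1]$ has total mass $\asymp 1$ (as your own substitution $\beta=\alpha\log x$ shows), so averaging the level-$\alpha$ bound $\ll x^{\alpha}H(\alpha)$ yields only $\ll\int_{1/\log x}^{1}\alpha^{-1}H(\alpha)\,d\alpha$, a full factor $\log x$ weaker than the claim. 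Nor do the off-central segments land ``with room to spare'' inside $\frac{1}{\log x}\int\alpha^{-1}H(\alpha)\,d\alpha$: for the Liouville function one has $F(s)=\zeta(2s)/\zeta(s)$ and hence $H(\alpha)\asymp 1$ uniformly, so the theorem's right-hand side is $\asymp(\log\log x)/\log x$, while your off-central bound is $\asymp H(\alpha)\asymp 1$ --- too big by a factor of about $\log x/\log\log x$. Thus the $\log x$ saving must be extracted from \emph{every} segment, not recovered from the oscillation of $x^{it}$ on the central one; your two proposed remedies (integration by parts, or a ``stationary phase'' heuristic for which there is no actual stationary-phase structure to exploit) address only $k=0$ and are in any case declared obstacles rather than carried out.

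Two further points. First, the additive $O(1)$ errors you accept from smoothing and truncation are fatal to the statement as given: since the right-hand side can tend to $0$ (again the Liouville example), the theorem has no additive term to absorb them, so every error must be kept multiplicatively inside $\frac{1}{\log x}\int_{1/\log x}^{1}\alpha^{-1}H(\alpha)\,d\alpha$; your closing assembly ``off-central estimate $+$ central segment $+$ $O(1)$'' is therefore not the theorem. Second, the actual Hal\'{a}sz--Montgomery mechanism (as in \cite{MontgomeryMittagLeffler} and \S III.4.3 of \cite{TenenbaumBook}, adapted in \cite{MVMeanValsOfMultFns}) gains the $\frac{1}{\log x}$ differently: one writes $\bigl(\sum_{n\le x}f(n)/n\bigr)\log x=\sum_{n\le x}\frac{f(n)}{n}\log\frac{x}{n}+\sum_{n\le x}\frac{f(n)}{n}\log n$, treats the first sum with the kernel $x^{s}/s^{2}$ (this is where the segment decomposition and $H(\alpha)$ enter), and handles the second via $f(n)\log n=(\Lambda_f*f)(n)$, i.e.\ via $F'$ together with mean-square estimates for Dirichlet series on vertical lines; the $\int d\alpha/\alpha$ arises there, not from averaging a one-parameter family of Perron contours. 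As written, your proposal would at best recover the weaker bound $\int_{1/\log x}^{1}\alpha^{-1}H(\alpha)\,d\alpha+O(1)$.
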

In order to deduce Theorem \ref{HalMVTenThm} from this, we use a bound on $F(s)$ due to Tenenbaum:
\begin{theorem}[Tenenbaum]
\label{CorFBoundFromTenenbaum}
Given $f, F$ as above, $x \geq 3$. Then we have
$$
F(1 + \alpha + it) \ll
\begin{cases}
(\log x) \, e^{-\m(f; \, x, \, T)} & \quad \text{for } |t| \leq T \\
\frac{1}{\alpha} & \quad \text{for } |t| > T
\end{cases}
$$
uniformly for $\alpha \in \left[\frac{1}{\log x}, 1\right]$.
\end{theorem}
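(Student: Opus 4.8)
The plan is to read off $\log|F(1+\alpha+it)|$ from the Euler product and then reduce to a lower bound for a prime sum. Write $\sigma=1+\alpha$. Since $f$ is completely multiplicative and $\sigma>1$,
$$\log F(\sigma+it)=\sum_p\sum_{k\ge 1}\frac{f(p)^k}{k\,p^{k(\sigma+it)}},$$
and the terms with $k\ge 2$ contribute only $O(1)$, because $\sum_p\sum_{k\ge2}p^{-k\sigma}\le\sum_p\frac1{p(p-1)}=O(1)$. Taking real parts and using $\sum_p p^{-1-\alpha}=\log\zeta(1+\alpha)+O(1)$ gives
$$\log\bigl|F(1+\alpha+it)\bigr| = \log\zeta(1+\alpha) - \sum_p\frac{1-\mathrm{Re}\bigl(f(p)p^{-it}\bigr)}{p^{1+\alpha}} + O(1).$$
Since $\alpha\ge 1/\log x$ we have $\log\zeta(1+\alpha)=\log(1/\alpha)+O(1)\le\log\log x+O(1)$, so the theorem will follow once the prime sum is bounded appropriately.

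For $|t|>T$ nothing further is needed: $|F(1+\alpha+it)|\le\zeta(1+\alpha)\ll 1/\alpha$ already gives the claim. So assume $|t|\le T$ and set $a_p:=1-\mathrm{Re}(f(p)p^{-it})\ge 0$; it suffices to show $\sum_p a_p p^{-1-\alpha}\ge\m(f;x,T)-O(1)$. Since the summands are non-negative I may keep only $p\le x$, and since $t$ is admissible in the minimum defining $\m$ we have $\sum_{p\le x}a_p/p\ge\m(f;x,T)$. Thus it is enough to bound the difference $\sum_{p\le x}a_p/p-\sum_{p\le x}a_p p^{-1-\alpha}=\sum_{p\le x}a_p(1-p^{-\alpha})/p$. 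Splitting at the natural scale $e^{1/\alpha}$ (which lies in $[e,x]$ because $1/\log x\le\alpha\le 1$), the contribution of $p\le e^{1/\alpha}$ is $O(1)$, since there $1-p^{-\alpha}\le\alpha\log p$ and $\alpha\sum_{p\le e^{1/\alpha}}(\log p)/p=O(1)$.

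What remains — and this is the heart of the matter — is the tail $\sum_{e^{1/\alpha}<p\le x}a_p/p$. The trivial bound $a_p\le 2$ only yields $2\log(\alpha\log x)+O(1)$ for this tail, which costs a spurious factor of $\log x$ in the final estimate. To remove it one has to use that the bound is delicate only when $t$ very nearly minimises $\D(f,n^{i\tau};x)^2$ (if it does not, the prime sum is already large and $|F|$ correspondingly small): for such $t$ the tail is at most $\m(f;x,T)-\m(f;e^{1/\alpha},T)+O(1)$, so everything comes down to the growth estimate $\m(f;x,T)\le\m(f;e^{1/\alpha},T)+\log(\alpha\log x)+O(1)$, i.e. that $\m(f;y,T)$ increases no faster than $\log\log y$ in the cut-off $y$. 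One proves this by exploiting the structure of $\D$: if the minimising $t$ for $\m(f;e^{1/\alpha},T)$ makes $f(p)p^{-it}$ strongly anti-correlated with $1$ on $(e^{1/\alpha},x]$, one perturbs $t$ by an integer multiple of $\pi\alpha$, which scarcely changes the contribution of $p\le e^{1/\alpha}$ but forces the tail contribution back down to $\log(\alpha\log x)+O(1)$. This is carried out in \S III.4.3 of Tenenbaum \cite{TenenbaumBook}; granting it, the two displays above finish the proof.
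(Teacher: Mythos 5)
The paper offers no internal proof of this statement: it is quoted as Theorem~\ref{CorFBoundFromTenenbaum} directly from \S III.4.3 of \cite{TenenbaumBook}, so there is no argument of the author's to compare yours against. Your Euler-product setup, the $O(1)$ from the $k\geq 2$ terms, the case $|t|>T$, and the treatment of $p\leq e^{1/\alpha}$ are all fine. But you have correctly diagnosed that the entire content of the theorem lies in the one step you do not prove, and your description of how that step goes is not quite right, so as written the proposal has a genuine gap at its heart. Two specific comments. First, the dichotomy ``either $t$ nearly minimises or $|F|$ is already small'' is both unneeded and not airtight as stated (the required bound must hold uniformly in $|t|\leq T$, and you never quantify the second horn). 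The clean route is to drop the comparison of the two prime sums entirely: with $z=e^{1/\alpha}$ one has $\sum_{p}a_p p^{-1-\alpha}\geq\sum_{p\leq z}a_p/p-O(1)=\D(f,n^{it};z)^2-O(1)\geq\m(f;z,T)-O(1)$ for \emph{every} admissible $t$, so the growth estimate $\m(f;x,T)\leq\m(f;z,T)+\log(\alpha\log x)+O(1)$ that you isolate does finish the proof for all $t$ at once.

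Second, that growth estimate is exactly where the minimum over $t$ must be exploited (without it one only gets $F\ll\alpha(\log x)^2e^{-\m}$, which feeds a spurious $\log\log x$ into Theorem~\ref{HalMVTenThm}), and ``perturbing $t$ by an integer multiple of $\pi\alpha$'' is not the mechanism. The correct argument is an averaging one: let $t_0$ attain $\m(f;z,T)$ and average $\D(f,n^{i(t_0+\delta)};x)^2$ over $\delta$ in an interval of length $\alpha$ chosen on the side of $t_0$ pointing toward the origin (so that $|t_0+\delta|\leq T$ throughout). For $p\leq z$ each term changes by at most $|\delta|\log p\leq\alpha\log z=1$ in the exponent, contributing $O(1)$ in total; for $p>z$ one uses $\bigl|\frac{1}{\alpha}\int_0^\alpha p^{-i\delta}\,d\delta\bigr|\leq\frac{1}{\alpha\log p}<1$, so the averaged long-range contribution is at most $\sum_{z<p\leq x}\frac{1}{p}\bigl(1+\frac{1}{\alpha\log p}\bigr)=\log(\alpha\log x)+O(1)$ since $\sum_{p>z}\frac{1}{p\log p}\ll\alpha^{-1}\cdot\alpha^2$. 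Hence some single $\delta$ works, giving $\m(f;x,T)\leq\D(f,n^{i(t_0+\delta)};x)^2\leq\m(f;z,T)+\log(\alpha\log x)+O(1)$. With this supplied, your skeleton closes; without it, the proof is a citation of \cite{TenenbaumBook} dressed up with the routine Mertens-type reductions.
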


We are now in the position to prove Theorem \ref{HalMVTenThm}.
\vspace{-0.25in}
\begin{proof}[Proof of Theorem \ref{HalMVTenThm}]
Applying the bound of Theorem \ref{CorFBoundFromTenenbaum}, we estimate $H(\alpha)$ from Montgomery and Vaughan's Theorem \ref{ThmMV2001} as follows:
\begin{eqnarray*}
H(\alpha) &=& \left( \sum_{k \in \Z}
\max_{s \in \B_k(\alpha)} \left| \frac{F(s)}{s-1} \right|^2 \right)^{1/2} \\
& \leq &
\left( \sum_{k \in \Z} \frac{1}{k^2 + \alpha^2}
\max_{s \in \B_k(\alpha)} | F(s) |^2 \right)^{1/2} \\
& \ll &
(\log x) \, e^{-\m(f; \, x, \, T)} \left( \sum_{|k| \leq T - \frac{1}{2}} \frac{1}{k^2 + \alpha^2}\right)^{1/2} +
\frac{1}{\alpha} \left( \sum_{|k| > T - \frac{1}{2}} \frac{1}{k^2 + \alpha^2}\right)^{1/2} \\
& \ll &
\frac{1}{\alpha} (\log x) \, e^{-\m(f; \, x, \, T)} +
(\log x) \, e^{-\m(f; \, x, \, T)} \left( \sum_{k \leq T} \frac{1}{k^2}\right)^{1/2} +
\frac{1}{\alpha} \left( \sum_{k > T - \frac{1}{2}} \frac{1}{k^2}\right)^{1/2} \\
& \ll &
\frac{1}{\alpha} (\log x) \, e^{-\m(f; \, x, \, T)} + \frac{1}{\alpha \sqrt{T}}
\end{eqnarray*}
Using this bound in Theorem \ref{ThmMV2001} immediately yields the result.
\end{proof}

\begin{proof}[Proof of Corollary \ref{HalMVTenCor}.]
Recall from Section \ref{SectMinorArcs} the convenient notation
$$
f_y(n) :=
\begin{cases}
f(n) \qquad & \text{if } n \in \s(y) \\
0 \qquad & \text{otherwise.}
\end{cases}
$$
As was noted there, $f \in \F$ implies that $f_y \in \F$.
Therefore, by Theorem \ref{HalMVTenThm} we have
$$
\mathop{\sum_{n \leq x}}_{n \in \s(y)} \frac{f(n)}{n} =
\sum_{n \leq x} \frac{f_y(n)}{n} \ll
(\log x) \, e^{-\m(f_y; \, x, \, T)} + \frac{1}{\sqrt{T}} .
$$
The following calculation completes the proof:
\begin{align*}
\m(f_y; \, x, \, T)
& =
\min_{|t| \leq T} \D\bigl(f_y(n), \, n^{it}; \, x\bigr)^2 \\
& =
\min_{|t| \leq T} \sum_{p \leq x} \frac{1 - \text{Re } f_y(p) \, p^{-it}}{p} \\
& =
\min_{|t| \leq T} \left( \sum_{p \leq y} \frac{1 - \text{Re } f(p) \, p^{-it}}{p} +
\sum_{y < p \leq x} \frac{1}{p} \right) \\
& =
\m(f; \, y, \, T) + \log\biggl(\frac{\log x}{\log y}\biggr) + O(1) .
\qedhere
\end{align*}
\end{proof}

\section{The major arc case: proof of Theorem \ref{MajorArcsThm}}
\label{SectMajorArcs}

We first derive the claimed bound for $b \neq 0$. In this case, we can apply the Granville-Soundararajan identity (Proposition \ref{GSID}), which we rewrite in the form
\begin{equation}
\label{EqGSIDrewrite}
\mathop{\sum_{n \leq N}}_{n \in \s(y)}
\frac{f(n)}{n} \, e\Bigl(\frac{b}{r} \, n\Bigr)
=
\mathop{\sum_{d|r}}_{d \in \s(y)}
\frac{f(d)}{d} \, a(d)
\end{equation}
where
$$
a(d) =
\frac{1}{\varphi\left(r/d\right)}
\sum_{\psi \, \left(\modulo \frac{r}{d}\right)}
\tau(\psi) \, \overline{\psi}(b)
\left(\mathop{\sum_{n \leq N/d}}_{n \in \s(y)}
\frac{f(n)\overline{\psi}(n)}{n}\right)  .
$$
Because we are assuming $r < \log y$, the restriction $d \in \s(y)$ above is superfluous.

Our first goal is to identify the {\em exceptional character}, the one primitive character which is the primary contributor to our exponential sum. To this end, consider the set of all primitive characters with conductor not exceeding $r$, where we include the constant function {\bf 1} as the primitive character $(\modulo 1)$ which induces all the principal characters to larger moduli. Enumerate all of these primitive characters as $\psi_k \, (\modulo m_k)$ in such a way that
$$
\m(f\overline{\psi_1};\, y,\, \log^2 y) \leq
\m(f\overline{\psi_2};\, y,\, \log^2 y) \leq
\ldots
$$
It will be seen that $\psi_1 \, (\modulo m_1)$ is the exceptional character for $f$; this is the character we called $\xi \, (\modulo m)$ in the statement of the theorem, and its contribution to the sum is difficult to control. We will return to this point later in the proof.

The behavior of the characters $\left(\modulo \frac{r}{d}\right)$ is determined by the set of primitive characters inducing them, so for ease of reference we define for each $d | r$ the set
$$
\K_d = \biggl\{k \, : \, m_k \, \biggl| \, \frac{r}{d} \biggr\} .
$$
Note that $\left|\K_d\right| = \varphi\bigl(\frac{r}{d}\bigr)$. We can rewrite $a(d)$ in terms of the underlying primitive characters $\bigl\{\psi_k \, (\modulo m_k)\bigr\}_{k \in \K_d}$:
$$
a(d) = \frac{1}{\varphi\Bigl(\frac{r}{d}\Bigr)} \sum_{k \in \K_d} \tau(\psi_k \chi_{{}_0}) \,
\overline{\psi_k}(b) \, \chi_{{}_0}(b)
\left(\mathop{\sum_{n \leq N/d}}_{n \in \s(y)}
\frac{f(n) \, \overline{\psi_k}(n) \, \chi_{{}_0}}{n}\right)
$$
where $\chi_{{}_0}$ is the principal character $\bigl(\modulo \frac{r}{d}\bigr)$.
A straightforward calculation shows that if a character $\psi \, (\modulo m)$ is induced by the primitive character $\psi^* \, (\modulo m^*)$, then
$$
\tau(\psi) = \mu\Bigl(\frac{m}{m^*}\Bigr) \, \psi^*\Bigl(\frac{m}{m^*}\Bigr) \, \tau(\psi^*) .
$$
Therefore,
$$
a(d) =
\frac{\chi_{{}_0}(b)}{\varphi\bigl(\frac{r}{d}\bigr)}
\sum_{k \in \K_d} \mu\Bigl(\frac{r}{d m_k}\Bigr) \,
\psi_k\Bigl(\frac{r}{d m_k}\Bigr) \, \tau(\psi_k) \, \overline{\psi_k}(b)
\mathop{\mathop{\sum_{n \leq N/d}}_{n \in \s(y)}}_{\bigl(n, \frac{r}{d}\bigr) = 1}
\frac{f(n) \, \overline{\psi_k}(n)}{n}
$$
We make one final cosmetic adjustment prior to estimating this quantity.
Hildebrand proved the following useful result (see Lemma 5 of \cite{Hildebrand}): for any $g \in \F$ and $x \geq 1$,
$$
\mathop{\sum_{n \leq x}}_{(n,k) = 1} \frac{g(n)}{n} =
\prod_{p | k} \left(1 - \frac{g(p)}{p}\right) \sum_{n \leq x} \frac{g(n)}{n} +
O\Bigl(\bigl(\log \log (k + 2)\bigr)^3\Bigr)
$$
where the implicit constant is absolute.\footnote{See Lemma 4.4 of \cite{GSPretentiousCharactersAndPV} for a substantially similar result.} Set $g = f \overline{\psi}$ for any Dirichlet character $\psi$, and let $g_y$ be the $y$-smoothed version of $g$ (defined on page \pageref{y-smoothedDefn}). Applying Hildebrand's lemma to $g_y$ and using the inequalities $d \leq r \leq y$, we see that
\begin{eqnarray*}
\mathop{\mathop{\sum_{n \leq N/d}}_{n \in \s(y)}}_{\bigl(n, \frac{r}{d}\bigr) = 1}
\frac{f(n) \, \overline{\psi}(n)}{n}
& = &
\mathop{\sum_{n \leq N/d}}_{\bigl(n, \frac{r}{d}\bigr) = 1} \frac{g_y(n)}{n} \\
& = &
\mathop{\sum_{n \leq N}}_{\bigl(n, \frac{r}{d}\bigr) = 1} \frac{g_y(n)}{n} + O(\log d) \\
& = &
\prod_{p | \frac{r}{d}} \left(1 - \frac{g_y(p)}{p}\right) \sum_{n \leq N} \frac{g_y(n)}{n} +
O(\log r) \\
& = &
\prod_{p | \frac{r}{d}} \left(1 - \frac{f(p) \, \overline{\psi}(p)}{p}\right)
\mathop{\sum_{n \leq N}}_{n \in \s(y)} \frac{f(n) \, \overline{\psi}(n)}{n} + O(\log r)
\end{eqnarray*}
Therefore, continuing our calculation from above,
$$
a(d) =
\frac{\chi_{{}_0}(b)}{\varphi\bigl(\frac{r}{d}\bigr)}
\sum_{k \in \K_d} \mu\Bigl(\frac{r}{d m_k}\Bigr) \,
\psi_k\Bigl(\frac{r}{d m_k}\Bigr) \, \tau(\psi_k) \, \overline{\psi_k}(b)
\prod_{p | \frac{r}{d}} \left(1 - \frac{f(p) \, \overline{\psi_k}(p)}{p}\right)
\mathop{\sum_{n \leq N}}_{n \in \s(y)} \frac{f(n) \, \overline{\psi_k}(n)}{n}
$$
up to an error of size
\begin{equation}
\label{EqErrorTermForad}
\ll \frac{1}{\varphi\bigl(\frac{r}{d}\bigr)} \sum_{k \in \K_d} \sqrt{m_k} \, \log r
\ll \sqrt{\frac{r}{d}} \, \log r
\end{equation}
since $m_k \bigl| \frac{r}{d}$ and $|\K_d| = \varphi\bigl(\frac{r}{d}\bigr)$. Before further refining our estimate for $a(d)$, we bound the accumulation of the error (\ref{EqErrorTermForad}) in the sum
$$
\mathop{\sum_{n \leq N}}_{n \in \s(y)}
\frac{f(n)}{n} \, e\Bigl(\frac{b}{r} \, n\Bigr)
=
\sum_{d|r} \frac{f(d)}{d} \, a(d)  .
$$
Since $r < \log y$, we find that the total possible contribution from the error terms is
\begin{equation}
\label{EqTotContribFromError}
\ll
\sum_{d | r} \frac{1}{d} \sqrt{\frac{r}{d}} \, \log r
\ll
\sqrt{r} \, \log r
\ll
\sqrt{r} \, \log \log y .
\end{equation}
In view of the bound claimed in Theorem \ref{MajorArcsThm}, this is negligible.

We now show that the contribution from all the non-exceptional characters $\displaystyle \{\psi_k\}_{k \geq 2}$ to $a(d)$ is not terribly large. From Corollary \ref{HalMVTenCor} we deduce that
\begin{multline}
\nonumber
\frac{\chi_{{}_0}(b)}{\varphi\bigl(\frac{r}{d}\bigr)}
\mathop{\sum_{k \in \K_d}}_{k \geq 2}
\mu\Bigl(\frac{r}{d m_k}\Bigr) \, \psi_k\Bigl(\frac{r}{d m_k}\Bigr) \, \tau(\psi_k) \, \overline{\psi_k}(b) \prod_{p | \frac{r}{d}} \left(1 - \frac{f(p) \, \overline{\psi_k}(p)}{p}\right)
\mathop{\sum_{n \leq N}}_{n \in \s(y)} \frac{f(n) \, \overline{\psi_k}(n)}{n} \ll \\
\ll
\frac{1}{\varphi\bigl(\frac{r}{d}\bigr)}
\mathop{\sum_{k \in \K_d}}_{k \geq 2}
\sqrt{m_k} \, \left(\prod_{p | \frac{r}{d}} \left(1 + \frac{1}{p}\right)\right)
\Bigl((\log y) \, e^{-\m(f \, \overline{\psi_k}; \, y, \, \log^2 y)} + \frac{1}{\log y}\Bigr) .
\end{multline}
Note that for any $g \in \F$ and any $T \geq 0$ we have $0 \leq \m(g; \, y, \, T) \leq 2 \log \log y + O(1)$, whence
$$
(\log y) \, e^{-\m(f \, \overline{\psi_k}; \, y, \, \log^2 y)} \gg \frac{1}{\log y} .
$$
Also, $m_k \leq \frac{r}{d}$ for all $k \in \K_d$, and
$$
\prod_{p | \frac{r}{d}} \left(1 + \frac{1}{p}\right) \ll \log \log \left(\frac{r}{d}+2\right)  .
$$
Therefore, the contribution from all the $k \geq 2$ to $a(d)$ is
$$
\ll
\frac{1}{\varphi\bigl(\frac{r}{d}\bigr)} \,
\sqrt{\frac{r}{d}} \, \biggl(\log \log \Bigl(\frac{r}{d}+2\Bigr)\biggr) \, (\log y)
\mathop{\sum_{k \in \K_d}}_{k \geq 2}
e^{-\m(f \, \overline{\psi_k}; \, y, \, \log^2 y)}  .
$$
To make further progress, we need lower bounds on $\m(f \, \overline{\psi_k}; \, y, \, \log^2 y)$ for $k \geq 2$; in other words, we wish to show that $f(n)$ cannot mimic too closely a function of the form $\psi(n) n^{it}$ so long as $\psi$ is not induced by the exceptional character $\psi_1$.
Fortuitously, such bounds were determined by Balog, Granville, and Soundararajan in their recent study of mean values of multiplicative functions over arithmetic progressions \cite{BGS}. Lemma 3.3 of that paper asserts that for all $k \geq 2$,
\begin{equation}
\label{EqBGSLowerBdOneThird}
\m(f \, \overline{\psi_k}; \, y, \, \log^2 y) \geq \left(\frac{1}{3} + o(1)\right) \log \log y .
\end{equation}
For larger values of $k$ we can do even better: from Lemma 3.1 of \cite{BGS} we deduce that
for all $k > \sqrt{\log \log y}$,
$$
\m(f \, \overline{\psi_k}; \, y, \, \log^2 y) \geq \log \log y + O\bigl(\sqrt{\log \log y}\bigr)  .
$$
Using these bounds in our calculations above (and keeping in mind that $|\K_d| = \varphi\bigl(\frac{r}{d}\bigr)$) we find that the contribution to $a(d)$ from all those $k \geq 2$ which are in $\K_d$ is
$$
\ll
\frac{1}{\varphi\bigl(\frac{r}{d}\bigr)} \,
\sqrt{\frac{r}{d}} \, \biggl(\log \log \Bigl(\frac{r}{d}+2\Bigr)\biggr) \, (\log y)^{2/3 + o(1)} +
\sqrt{\frac{r}{d}} \, \biggl(\log \log \Bigl(\frac{r}{d}+2\Bigr)\biggr) \, e^{O(\sqrt{\log \log y})} .
$$
Going back to equation (\ref{EqGSIDrewrite}), we see that the total contribution of all such terms to $$
\mathop{\sum_{n \leq N}}_{n \in \s(y)}
\frac{f(n)}{n} \, e\Bigl(\frac{b}{r} \, n\Bigr)
=
\sum_{d|r} \frac{f(d)}{d} \, a(d)
$$
is
\begin{eqnarray*}
&\ll &
\sum_{d|r} \frac{1}{d} \left(
\frac{1}{\varphi\bigl(\frac{r}{d}\bigr)} \,
\sqrt{\frac{r}{d}} \, \biggl(\log \log \Bigl(\frac{r}{d}+2\Bigr)\biggr) \, (\log y)^{2/3 + o(1)} +
\sqrt{\frac{r}{d}} \, \biggl(\log \log \Bigl(\frac{r}{d}+2\Bigr)\biggr) \, e^{O(\sqrt{\log \log y})}
\right) \\
& \ll &
\sqrt{r} \, \bigl(\log \log (r+2)\bigr) \sum_{d|r} \left(\frac{1}{d}\right)^{3/2} \left(\frac{1}{\varphi\bigl(\frac{r}{d}\bigr)} \, (\log y)^{2/3 + o(1)}
+ e^{O(\sqrt{\log \log y})}\right) \\
& \ll &
\frac{1}{r} \, \bigl(\log \log (r+2)\bigr) \, (\log y)^{2/3 + o(1)}
\sum_{d|r} \frac{d^{3/2}}{\varphi(d)} +
\sqrt{r} \, \bigl(\log \log (r+2)\bigr) e^{O(\sqrt{\log \log y})}
\end{eqnarray*}
where we have used the change of variables $d \leftrightarrow \frac{r}{d}$ in the sum.
Finally, recall that
$$
\frac{n}{\varphi(n)} \ll \log \log n \qquad \text{and} \qquad \log d(n) \ll \frac{\log n}{\log \log n}
$$
where $d(n)$ denotes the number of divisors of $n$; in particular, we deduce that ${d(r) \ll (\log y)^{o(1)}}$ where $o(1) \to 0$ as $y \to \infty$. Using these bounds in conjunction with our above results, we deduce that the total contribution of all the primitive characters $\psi_k$ with $k \geq 2$ is
$$
\ll
\frac{1}{\sqrt{r}} \, (\log y)^{2/3 + o(1)}
+
\sqrt{r} \, e^{C\sqrt{\log \log y}}
$$
where both $C$ and the implicit constant are absolute, and $o(1) \to 0$ as $y \to \infty$.

If $m_1 \nmid r$ then $1 \not\in \K_d$ for all $d \mid r$, which means that the exceptional character $\psi_1 \, (\modulo m_1)$ does not contribute anything to our exponential sum. In this case, our above estimates tell the whole story, and we conclude the proof of the theorem.

Now suppose instead that $m_1 \mid r$; in this case, we must estimate the contribution from the exceptional character $\psi_1 \, (\modulo m_1)$ to each $a(d)$. This character appears in our sum precisely whenever $1 \in \K_d$ (i.e. whenever $\psi_1$ induces a character $\bigl(\modulo \frac{r}{d}\bigr)$), so the total contribution of this exceptional character is
$$
\sum_{d \, \bigl| \frac{r}{m_1}}
\frac{f(d)}{d} \cdot
\frac{1}{\varphi\bigl(\frac{r}{d}\bigr)} \,
\mu\Bigl(\frac{r}{dm_1}\Bigr) \,
\psi_1\Bigl(\frac{r}{dm_1}\Bigr) \,
\tau(\psi_1) \,
\overline{\psi_1}(b)
\left( \prod_{p \, \bigl| \frac{r}{dm_1}} \left(1 - \frac{f \, \overline{\psi_1}(p)}{p}\right) \right)
\mathop{\sum_{n \leq N}}_{n \in \s(y)} \frac{f \, \overline{\psi_1}(n)}{n}  .
$$
Note that the product now runs over only those $p$ dividing $\frac{r}{dm_1}$, not just those dividing $\frac{r}{d}$ (it is easily seen that this extra restriction does not change the value of the product).
Making the change of variables $d \leftrightarrow \frac{r}{dm_1}$, we find that $\psi_1$'s contribution can be rewritten in the form
\begin{equation}
\label{EqMainTermContrib}
\frac{m_1}{r} \,
\tau(\psi_1) \,
\overline{\psi_1}(b) \,
\left( \mathop{\sum_{n \leq N}}_{n \in \s(y)} \frac{f \, \overline{\psi_1}(n)}{n} \right)
\sum_{d \, \bigl| \frac{r}{m_1}} f\Bigl(\frac{r}{dm_1}\Bigr) A(d)
\end{equation}
where
$$
A(d) =
\frac{d}{\varphi(dm_1)} \,
\mu(d) \,
\psi_1(d) \,
\prod_{p|d} \left(1 - \frac{f \, \overline{\psi_1}(p)}{p}\right)  .
$$
Note that $A(d) = 0$ whenever $(d,m_1) \neq 1$, so only those $d$ which are coprime to $m_1$ contribute to the sum in (\ref{EqMainTermContrib}). Moreover, the same reasoning shows that we need only consider squarefree $d$. Therefore,
\begin{eqnarray*}
A(d)
& = &
\frac{1}{\varphi(m_1)}
\cdot
\frac{d \, \mu(d) \, \psi_1(d)}{\varphi(d)}
\prod_{p|d} \left(1 - \frac{f \, \overline{\psi_1}(p)}{p}\right) \\
& = &
\frac{1}{\varphi(m_1)}
\prod_{p|d} \left(\frac{f(p) - \psi_1(p) \cdot p}{\varphi(p)}\right) \\
& \ll &
\frac{1}{\varphi(m_1)}
\prod_{p|d} \left(\frac{p+1}{p-1}\right) \\
& \ll &
\frac{1}{\varphi(m_1)}
\big( \log \log (d+2) \bigr)^2 .
\end{eqnarray*}
Combining this with Corollary \ref{HalMVTenCor} and (\ref{EqMainTermContrib}) and making elementary estimates as above, we conclude that the total contribution from $\psi_1$ is
$$
\ll \frac{\sqrt{m_1}}{\varphi(m_1)} \, (\log y) \,
e^{-\m(f \, \overline{\psi_1}; \, y, \, \log^2y)} ;
$$
this completes the proof of Theorem \ref{MajorArcsThm} in the case $b \neq 0$.

To show that the same bound holds for the case $b=0$, we consider two separate cases: either $\psi_1$ is the trivial character {\bf 1}, or it isn't. In the former scenario, $m_1 = 1$, so from Corollary \ref{HalMVTenCor} we deduce that
\begin{equation}
\label{EqXiIsTrivial}
\mathop{\sum_{n \leq N}}_{n \in \s(y)} \frac{f(n)}{n}
\ll
\frac{\sqrt{m_1}}{\varphi(m_1)} \, e^{-\m(f \, \overline{\psi_1}; \, y, \, \log^2y)} .
\end{equation}
If, on the other hand, $\psi_1$ is not the trivial character, then by Corollary \ref{HalMVTenCor} together with the lower bound (\ref{EqBGSLowerBdOneThird}) we find
\begin{equation}
\label{EqXiIsNotTrivial}
\mathop{\sum_{n \leq N}}_{n \in \s(y)} \frac{f(n)}{n}
\ll
\frac{1}{\sqrt{r}} \, (\log y)^{2/3 + o(1)}
\end{equation}
(recall our convention that the reduced form of 0 is $\frac{0}{1}$, so $r=1$). In either case, these bounds are subsumed by those claimed. This concludes the proof.

\section{Exponential sums with multiplicative coefficients and character sums: proofs of Theorems \ref{HybridBoundThm} and \ref{SharpThm21}}
\label{SectCharacterSumBound}

Having dealt with both the major and minor arcs, we can now prove Theorem~\ref{HybridBoundThm} without too much difficulty.

\begin{proof}[Proof of Theorem \ref{HybridBoundThm}]
As in the statement of the theorem, set $M = \exp\biggl(\exp\Bigl(\frac{\log \log y}{\log \log \log y}\Bigr)\biggr)$. By Dirichlet's theorem on Diophantine approximation, there exists a reduced fraction $\frac{b}{r}$ with $1 \leq r \leq M$, such that
\begin{equation}
\label{EqDiophantineApprox}
\left| \alpha - \frac{b}{r} \right| \leq \frac{1}{rM} .
\end{equation}
If the hypotheses of (I) hold (i.e. if $\alpha$ belongs to a minor arc), Corollary \ref{MinorArcsCorollary} immediately implies the result claimed.

Suppose instead that the hypotheses of (I) fail to hold (i.e. $\alpha$ belongs to a major arc). By Lemma \ref{alphatoratl}, since $M \geq 2(\log y)^4 \, \log \log y$ there exists an
$N~\in~[M, x]$ such that
$$
\mathop{\sum_{n \leq x}}_{n \in \s(y)} \frac{f(n)}{n} \, e(n\alpha)
=
\mathop{\sum_{n \leq N}}_{n \in \s(y)} \frac{f(n)}{n} \, e\Bigl(\frac{b}{r} \, n\Bigr) +
O(\log \log y) .
$$
Applying Theorem \ref{MajorArcsThm} immediately yields the claim for the scenarios (II) and (III).
\end{proof}

Theorem \ref{SharpThm21} is not much harder:
\begin{proof}[Proof of Theorem \ref{SharpThm21}]
Taking $N=q$ in P\'{o}lya's Fourier expansion (\ref{PolyasFourierExpansion}) we see that
we must bound the sum
$$
\sum_{1 \leq |n| \leq q} \frac{\overline{\chi}(n)}{n} \, e(n\alpha)
$$
for $\alpha = 0$ or $-\frac{nt}{q}$. As in the proof of Theorem \ref{MajorArcsThm}, we treat the cases $\alpha = 0$ and $\alpha \neq 0$ separately, starting with the latter.

Recall from the statement of the theorem that we set
$$
Q =
\begin{cases}
q & \quad \text{unconditionally} \\
(\log q)^{12} & \quad \text{conditionally on the GRH.}
\end{cases}
$$
We use Proposition \ref{GRHprop} to restrict attention to smooth arguments, in the case that the GRH is assumed:
\begin{equation}
\label{EqSmoothRestriction}
\sum_{1 \leq |n| \leq q} \frac{\overline{\chi}(n)}{n} \, e(n\alpha)
=
\mathop{\sum_{1 \leq |n| \leq q}}_{n \in \s(Q)} \frac{\overline{\chi}(n)}{n} \, e(n\alpha)
+ O(1) .
\end{equation}
Note that this holds unconditionally as well, albeit with superfluous error term. We next find a Diophantine rational approximation to $\alpha$, i.e. a reduced fraction $\frac{b}{r}$ with $1 \leq r \leq M$ such that
$$
\left| \alpha - \frac{b}{r} \right| \leq \frac{1}{rM}  .
$$
Lemma \ref{alphatoratl} asserts that for $M \geq 2(\log Q)^4 \log \log Q$ there exists $N \in [M, q]$ such that
$$
\mathop{\sum_{1 \leq |n| \leq q}}_{n \in \s(Q)} \frac{\overline{\chi}(n)}{n} \, e(n\alpha)
=
\mathop{\sum_{1 \leq |n| \leq N}}_{n \in \s(Q)} \frac{\overline{\chi}(n)}{n} \, e\Bigl(\frac{b}{r} \, n\Bigr) + O(\log \log Q)  .
$$
Finally, note that
$$
\mathop{\sum_{1 \leq |n| \leq N}}_{n \in \s(Q)} \frac{\overline{\chi}(n)}{n} \, e\Bigl(\frac{b}{r} \, n\Bigr)
=
\mathop{\sum_{n \leq N}}_{n \in \s(Q)} \frac{\overline{\chi}(n)}{n} \, e\Bigl(\frac{b}{r} \, n\Bigr)
- \chi(-1)
\mathop{\sum_{n \leq N}}_{n \in \s(Q)} \frac{\overline{\chi}(n)}{n} \, e\Bigl(\frac{-b}{r} \, n\Bigr)  .
$$
Since $M \to \infty$ with $q$ while $\alpha \neq 0$ remains fixed, we must have $b \neq 0$. It follows that we can apply the Granville-Soundararajan identity (Property \ref{GSID}) to both of the expressions on the right hand side of the above equation, deducing the relation
\begin{multline} \nonumber
\mathop{\sum_{1 \leq |n| \leq N}}_{n \in \s(Q)} \frac{\overline{\chi}(n)}{n} \, e\Bigl(\frac{b}{r} \, n\Bigr) = \\
= \mathop{\sum_{d|r}}_{d \in \s(Q)}
\frac{\overline{\chi}(d)}{d} \cdot \frac{1}{\varphi\left(\frac{r}{d}\right)}
\sum_{\psi \, \left(\modulo \frac{r}{d}\right)}
\bigl(1 - \chi(-1) \psi(-1)\bigr) \,
\tau(\overline{\psi}) \, \psi(b)
\left(\mathop{\sum_{n \leq N/d}}_{n \in \s(Q)}
\frac{\overline{\chi}(n) \psi(n)}{n}\right)  .
\end{multline}
The arguments from the proofs of Theorems \ref{HybridBoundThm} and \ref{MajorArcsThm} carry over virtually verbatim, and we conclude that for $\alpha \neq 0$,
$$
\sum_{1 \leq |n| \leq q} \frac{\overline{\chi}(n)}{n} \, e(n\alpha)
\ll
\bigl(1 - \chi(-1) \xi(-1)\bigr) \frac{\sqrt{m}}{\varphi(m)} \, (\log Q) \, e^{-\m(\chi\,\overline{\xi}; \, Q, \, \log^2 Q)} + (\log Q)^{2/3 + o(1)}
$$
where the implicit constant is absolute, and $o(1) \to 0$ as $q \to \infty$.

We now treat the case $\alpha = 0$; again, the arguments will be familiar. We begin as before, by using (\ref{EqSmoothRestriction}) to (potentially) restrict the sum
$$
\sum_{1 \leq |n| \leq q} \frac{\overline{\chi}(n)}{n}
=
\bigl(1 - \chi(-1)\bigr) \sum_{n \leq q} \frac{\overline{\chi}(n)}{n}
$$
to $Q$-smooth arguments. We consider separately the two cases $\xi = {\textbf 1}$ and $\xi \neq {\textbf 1}$. In the former, $\xi(-1) = 1$, whence
\begin{eqnarray*}
\bigl(1 - \chi(-1)\bigr) \mathop{\sum_{n \leq q}}_{n \in \s(Q)} \frac{\overline{\chi}(n)}{n}
&=&
\bigl(1 - \chi(-1)\xi(-1)\bigr) \mathop{\sum_{n \leq q}}_{n \in \s(Q)} \frac{\xi(n)\overline{\chi}(n)}{n} \\
&\ll&
\bigl(1 - \chi(-1)\xi(-1)\bigr) \frac{\sqrt{m}}{\varphi(m)} \, e^{-\m(\chi\,\overline{\xi}; \, Q, \, \log^2 Q)}
\end{eqnarray*}
by Corollary \ref{HalMVTenCor} (as in (\ref{EqXiIsTrivial})). If $\xi \neq {\textbf 1}$, then from (\ref{EqXiIsNotTrivial}) we know that
$$
\bigl(1 - \chi(-1)\xi(-1)\bigr) \mathop{\sum_{n \leq q}}_{n \in \s(Q)} \frac{\overline{\chi}(n)}{n} \ll
(\log Q)^{2/3 + o(1)}
$$
where the constant is absolute and $o(1) \to 0$ as $q \to \infty$.

Putting all of this together with P\'{o}lya's Fourier expansion, we deduce the claimed bound on $S_\chi(t)$.
\end{proof}

\section{Multiplicative non-mimicry: proof of Theorem \ref{GeneralizeLemma32}}
\label{SectLowerBoundOnDistance}

In Lemma 3.2 of \cite{GSPretentiousCharactersAndPV}, Granville and Soundararajan proved that
for any primitive character $\chimodq$ of odd order $g$, and any primitive character $\xi$ of opposite parity and conductor smaller than a power of $\log y$,
\begin{equation}
\label{Lemma32}
\D(\chi, \xi; y)^2 \geq
\bigl(\delta_g + o(1)\bigr) \log \log y .
\end{equation}
Our goal in this section is to prove Theorem \ref{GeneralizeLemma32}, which asserts that the same lower bound continues to hold for small perturbations of $\xi$. To be precise, we will show that under the same hypotheses on $\chi$ and $\xi$ as above,
\begin{equation}
\label{BoundFromGenLemma32}
\D\bigl(\chi(n), \xi(n) n^{i \beta}; y\bigr)^2
\geq
\bigl(\delta_g + o(1)\bigr) \log \log y
\end{equation}
for all $\beta$ of magnitude smaller than $\log^2 y$. For
$\beta = o\!\left(\frac{ \log \log y}{\log y}\right)$ this is straightforward:
\begin{eqnarray*}
\mathbb{D}\left(\chi(n), \xi(n) n^{i\beta}; y\right)^2
&=&
\sum_{p \leq y} \frac{1}{p}
\left(1 - \text{Re } \chi \overline{\xi}(p) e^{-i\beta \log p}\right) \\
&=&
\sum_{p \leq y} \frac{1}{p}
\left(1 - \text{Re } \chi \overline{\xi}(p)
\left(1 + O\left(|\beta| \log p\right) \right) \right) \\
&=&
\mathbb{D}\left(\chi, \xi; y\right)^2 +
O\left(|\beta| \sum_{p \leq y} \frac{\log p}{p}\right) \\
&=& \mathbb{D}\left(\chi, \xi; y\right)^2 + o\left(\log \log
y\right)
\end{eqnarray*}
and thus for such $\beta$, (\ref{BoundFromGenLemma32}) follows from (\ref{Lemma32}).
For larger perturbations, however, the problem is more delicate.

Our plan of attack is as follows. Fix a primitive Dirichlet character $\chimodq$ of odd order $g$, and a primitive $\xi \, (\modulo m)$ of opposite parity to $\chi$. Since $\chi$ has odd order, $\chi(-1) = 1$, whence $\xi(-1) = -1$ and therefore $\xi$ has even order $k$, say. We partition the interval $[2,y]$ into many small intervals of the form $(x,(1+\delta)x]$, where $\delta$ is small. For each prime $p$ in such an interval, we approximate $p^{-i\beta}$ by $x^{-i\beta}$. This reduces our problem to estimating sums of the form
$$
\sum_{\ell \, (\text{mod } k)}
\mathop{\sum_{x < p \leq (1+\delta) x}}_{\xi(p) = e\left(\frac{\ell}{k}\right)}
\frac{1}{p} \left(1 - \text{Re } \chi(p) \, e\!\left(- \frac{\ell}{k}\right) x^{-i\beta} \right) .
$$
Following Granville and Soundararajan's proof of (\ref{Lemma32}), we ignore the arithmetic properties of $\chi$ and view it as an arbitrary function from $\Z$ to $\mu_g \cup \{0\}$; here $\mu_g$ denotes the set of $g^\text{th}$ roots of unity. This leads us to consider
$$
\sum_{\ell \, (\text{mod } k)}
\mathop{\sum_{x < p \leq (1+\delta) x}}_{\xi(p) = e\left(\frac{\ell}{k}\right)}
\frac{1}{p} \min_{z \in \mu_g \cup \{0\}} \left(1 - \text{Re } z \, e\!\left(- \frac{\ell}{k}\right) x^{-i\beta} \right) ,
$$
and since the only factor dependent on $p$ is the $\frac{1}{p}$ out front, we look at
$$
\mathop{\sum_{x < p \leq (1+\delta) x}}_{\xi(p) = e\left(\frac{\ell}{k}\right)}
\frac{1}{p}  .
$$
We expect $\xi(p) = e\big(\frac{\ell}{k}\big)$ for $1/k$ of the primes, so the natural guess is
\[
\mathop{\sum_{x < p \leq (1+\delta) x}}_{\xi(p) = e\left(\frac{\ell}{k}\right)} \frac{1}{p}
\approx
\frac{1}{k} \sum_{x < p \leq (1+\delta) x} \frac{1}{p}
\approx
\frac{\delta}{k \, \log x} .
\]
A straightforward application of Siegel-Walfisz will make this estimate rigorous (see Lemma \ref{SWapp}), and the remaining sum
$$
\sum_{\ell \, (\text{mod } k)}
\min_{z \in \mu_g \cup \{0\}} \left(1 - \text{Re } z \, e\!\left(- \frac{\ell}{k}\right) x^{-i\beta} \right)
$$
will then be evaluated by arguments inspired by those of \cite{GSPretentiousCharactersAndPV}.
Summing over all the small intervals will yield the desired lower bound (\ref{BoundFromGenLemma32}).

\subsection{The contribution from short intervals}

%First note that for all $p$ we have $0 \leq 1 - \text{Re } \chi \overline{\xi} (p) \, p^{-i \alpha} \leq 2$, whence we have
%\begin{eqnarray*}
%\mathbb{D}\left(\chi(n), \xi(n) n^{i\alpha}; y\right)^2
%&=&
%\sum_{p \leq y} \frac{1}{p}
%\left(1 - \text{Re } \chi \overline{\xi}(p) \, p^{-i\alpha}\right) \\
%& \geq &
%\sum_{(\log y)^A < p \leq y} \frac{1}{p}
%\left(1 - \text{Re } \chi \overline{\xi}(p) \, p^{-i\alpha}\right)
%\end{eqnarray*}
%where $A$ is any constant. Then for any $\epsilon \geq \frac{A \log \log y}{\log y}$ and $\delta \in (0,1)$ to be chosen later,
%$$
%\mathbb{D}\left(\chi(n), \xi(n) n^{i\alpha}; y\right)^2
%\geq
%\sum_{r \leq \frac{(1-\epsilon) \log y}{\log (1+\delta)}}
%\sum_{B_{r-1} < p \leq B_r}
%\frac{1}{p} \left(1 - \text{Re } \chi \overline{\xi}(p) \, p^{-i\alpha}\right)
%$$
%where $B_j = (1+\delta)^j (\log y)^A$.

Our first goal is to obtain a lower bound on the sum over a short
interval
\begin{equation}
\label{interval}
\sum_{x < p \leq (1+\delta)x} \frac{1}{p} \left( 1
- \text{Re } \chi \overline{\xi} (p) \, p^{-i\beta} \right)
\end{equation}
where
$$
\delta \asymp \frac{1}{\log^3 y}  \, .
$$
Note that for any prime $p \in \bigl(x , (1+\delta)x\bigr]$, we may
approximate $p^{i \beta}$ by $x^{i \beta}$: we have
${0 \leq \log p - \log x \leq \delta}$,
whence
\begin{eqnarray*}
\left| p^{-i\beta} - x^{-i\beta} \right|
&=&
\left| 1 - e^{i\beta (\log p - \log x)} \right| \\
& \leq &
\bigl| \beta (\log p - \log x) \bigr| \\
& \leq &
\delta |\beta|  .
\end{eqnarray*}
Therefore,
\begin{eqnarray}
\sum_{x < p \leq (1 + \delta) x}
\frac{1}{p} \left(1 - \text{Re } \chi \overline{\xi}(p) p^{-i\beta}\right)
\!\!\!
&=&
\!\!\!\!\!\!\!
\sum_{x < p \leq (1 + \delta) x}
\frac{1}{p} \left(1 - \text{Re } \chi \overline{\xi}(p) x^{-i\beta}\right) +
O\left(\delta |\beta| \sum_{x < p \leq (1 + \delta) x} \frac{1}{p} \right) \nonumber \\
&=&
\!\!\!\!\!\!\!
\sum_{x < p \leq (1 + \delta) x} \frac{1}{p} \left(1 - \text{Re
} \chi \overline{\xi}(p) e(\theta_x) \right) + O\left(\frac{\delta^2
\log^2 y}{\log x} \right) \label{sect2bigsum}
\end{eqnarray}
where $\theta_x = -\frac{\beta}{2\pi} \log x$.
%In particular, we deduce that
%$$
%\mathbb{D}\left(\chi(n), \xi(n) n^{i\alpha}; y\right)^2
%\geq
%\sum_{r \leq \frac{(1-\epsilon) \log y}{\log (1+\delta)}}
%\sum_{B_{r-1} < p \leq B_r}
%\frac{1}{p} \left(1 - \text{Re } \chi \overline{\xi}(p) \, e(\theta_r)\right) +
%O\left(\delta \log^3 y\right)
%$$
%where
%$$
%\theta_r := -\frac{\alpha}{2\pi} \Bigl((r-1) \log (1+\delta) + A\log\log y\Bigr)  .
%$$
We bound the sum from below in terms of the orders of $\chi$ and
$\xi$:
\begin{eqnarray*}
\sum_{x < p \leq (1+\delta)x} \frac{1}{p} \left(1 - \text{Re } \chi \overline{\xi}(p) \, e(\theta_x)\right)
&=&
\!\!\!\!
\sum_{\ell \, (\text{mod } k)}
\mathop{\sum_{x < p \leq (1+\delta)x}}_{\xi(p) = e\left(\frac{\ell}{k}\right)}
\frac{1}{p} \Biggl(1 - \text{Re } \chi(p) \, e\biggl(- \frac{\ell}{k}\biggr) e(\theta_x)\Biggr) \\
& \geq &
\!\!\!\!
\sum_{\ell \, (\text{mod } k)} \mathop{\sum_{x < p \leq
(1+\delta)x}}_{\xi(p) = e\left(\frac{\ell}{k}\right)} \frac{1}{p}
\min_{z \in \mu_g \cup \{0\}} \Biggl(1 - \text{Re } z \cdot
e\biggl(\theta_x - \frac{\ell}{k}\biggr) \Biggr)
\end{eqnarray*}
We first estimate the interior sum over primes:
\begin{lemma}
\label{SWapp}
Suppose $\epsilon > 0$, $\xi \, (\text{mod } m)$ is a nonprincipal character
of order $k$, and ${y \geq \exp(m^\epsilon)}$. Then for $\delta \asymp (\log y)^{-3}$ and
${x \geq \exp\bigl((\log y)^\epsilon\bigr)}$,
$$
\mathop{\sum_{x < p \leq (1+\delta)x}}_{\xi(p) =
e\left(\frac{\ell}{k}\right)} \frac{1}{p} = \frac{\delta}{k \log x}
\bigl(1+o(1)\bigr)
$$
where $o(1) \to 0$ as $y \to \infty$ and depends only on $y$ and
$\epsilon$.
\end{lemma}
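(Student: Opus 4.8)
The plan is to isolate the condition $\xi(p)=e(\ell/k)$ by orthogonality on the cyclic group of order $k$ generated by $\xi$, and then apply the prime number theorem in arithmetic progressions. Since $\xi$ has order exactly $k$ its image is $\mu_k$, so for $p\nmid m$ the indicator of $\xi(p)=e(\ell/k)$ equals $\frac1k\sum_{j\,(\text{mod }k)}\xi^j(p)\,e(-j\ell/k)$; hence
\[
\mathop{\sum_{x<p\leq(1+\delta)x}}_{\xi(p)=e(\ell/k)}\frac1p
=\frac1k\sum_{j\,(\text{mod }k)}e\!\left(-\frac{j\ell}{k}\right)\sum_{x<p\leq(1+\delta)x}\frac{\xi^j(p)}{p}.
\]
The term $j=0$ furnishes the putative main term $\frac1k\sum_{x<p\leq(1+\delta)x}1/p$ (note that every prime in the range exceeds $m$, so $\xi^0(p)=1$ there), while for $1\leq j\leq k-1$ the character $\xi^j\,(\text{mod }m)$ is nonprincipal because $\xi$ has order $k$, and I will show each inner sum is negligibly small.

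For the main term I would invoke the prime number theorem with its classical error term, $\sum_{p\leq t}1/p=\log\log t+B+O\bigl(\exp(-c\sqrt{\log t})\bigr)$ for an absolute $c>0$. Subtracting the values at $(1+\delta)x$ and $x$ and using $\log\log((1+\delta)x)-\log\log x=\log\bigl(1+\tfrac{\log(1+\delta)}{\log x}\bigr)=\frac{\delta}{\log x}(1+o(1))$ — legitimate since $\delta\to0$ and $\log x\geq(\log y)^\epsilon\to\infty$ — gives $\sum_{x<p\leq(1+\delta)x}1/p=\frac{\delta}{\log x}(1+o(1))$, provided $\exp(-c\sqrt{\log x})=o(\delta/\log x)$. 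This last point is exactly where the hypotheses $y\geq\exp(m^\epsilon)$ and $x\geq\exp((\log y)^\epsilon)$ are used: the second gives $\log y\leq(\log x)^{1/\epsilon}$, so $\delta\asymp(\log y)^{-3}$ exceeds a fixed negative power of $\log x$, which $\exp(-c\sqrt{\log x})$ beats.

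For $1\leq j\leq k-1$ I would use the Siegel--Walfisz theorem. Since $m\leq(\log y)^{1/\epsilon}\leq(\log x)^{1/\epsilon^2}$ throughout our range, it applies with $A=1/\epsilon^2$ and yields $\psi(t;\xi^j)\ll_\epsilon t\exp(-c_\epsilon\sqrt{\log t})$; removing prime powers and passing from $\psi$ to $\theta$ to the prime-counting function by partial summation gives $\sum_{p\leq t}\xi^j(p)\ll_\epsilon t\exp(-c'_\epsilon\sqrt{\log t})$. A second partial summation over the short interval — writing $\sum_{x<p\leq(1+\delta)x}\xi^j(p)/p$ in terms of $S(t):=\sum_{x<p\leq t}\xi^j(p)$, which is $\ll x\exp(-c'_\epsilon\sqrt{\log x})$ for $t\in[x,(1+\delta)x]$ — produces $\sum_{x<p\leq(1+\delta)x}\xi^j(p)/p\ll\exp(-c'_\epsilon\sqrt{\log x})$. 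Summing these $k-1$ contributions and dividing by $k$ leaves an error $O\bigl(\exp(-c'_\epsilon\sqrt{\log x})\bigr)$, which — as in the previous paragraph, and using $k\leq\varphi(m)<m\leq(\log y)^{1/\epsilon}$ — is $o\bigl(\delta/(k\log x)\bigr)$. Combining the two pieces gives $\frac{\delta}{k\log x}(1+o(1))$, and since $m$, $k$, $\delta$ and the lower bound for $\log x$ are all controlled by $y$ and $\epsilon$ alone, so is the $o(1)$.

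The orthogonality expansion and the partial summations are routine; the only delicate point is the bookkeeping of parameters, since the main term $\delta/(k\log x)$ itself tends to $0$ with $y$. One must therefore feed in the exponential-savings forms of the prime number theorem and of Siegel--Walfisz rather than elementary estimates, and then verify that $\exp(-c\sqrt{\log x})$ dominates the relevant fixed powers of $\log y$. The one genuine external ingredient is Siegel's theorem (through Siegel--Walfisz), which makes $c_\epsilon$ ineffective — harmless for an $o(1)$ statement.
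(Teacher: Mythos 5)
Your proof is correct, but it takes a different route from the paper's. You detect the condition $\xi(p)=e(\ell/k)$ by orthogonality over the $k$ powers of $\xi$, so that the main term comes from the single principal term $j=0$ (handled by Mertens' theorem with the classical PNT error $\exp(-c\sqrt{\log t})$), and the $k-1$ nonprincipal characters $\xi^j$ are each killed by Siegel--Walfisz plus two partial summations. The paper instead partitions the primes into residue classes $a\,(\text{mod } m)$, applies Siegel--Walfisz in the form $\theta(X;m,a)=\frac{X}{\varphi(m)}(1+O((\log X)^{-A}))$ to each class, and then needs a separate group-theoretic step (a coset count for the kernel of $\xi$) to show that exactly $\varphi(m)/k$ residues $a$ satisfy $\xi(a)=e(\ell/k)$. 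Your version sums over only $k$ characters rather than $\varphi(m)/k$ progressions and dispenses with the coset-counting lemma entirely, at the cost of having to track the nonprincipal character sums explicitly; the paper's version outsources all the cancellation to the progression form of Siegel--Walfisz. The parameter bookkeeping you flag as the delicate point — that $\exp(-c_\epsilon\sqrt{\log x})$ beats $\delta/(k\log x)\gg(\log x)^{-3/\epsilon-1/\epsilon^2-1}$ via $m\leq(\log y)^{1/\epsilon}\leq(\log x)^{1/\epsilon^2}$ — is handled correctly, and your observation that all primes in the range exceed $m$ (so the $j=0$ term is genuinely $\frac1k\sum 1/p$) closes the one place where the orthogonality identity could have misfired.
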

Note that this estimate is independent of $\ell$. Thus, the
following general result, combined with Lemma \ref{SWapp}, will
furnish a lower bound on the sum (\ref{interval}):
\begin{lemma}
\label{summin} Given $g \geq 3$ odd, $k \geq 2$ even, and
$\theta \in \left(-\frac{1}{2}, \frac{1}{2}\right]$.
Set $k^* = \frac{k}{(g,k)}$. Then
\begin{equation}
\label{Flem}
\frac{1}{k} \sum_{\ell \, (\text{mod } k)} \min_{z \in \mu_g \cup\{0\}}
\Biggl( 1 - \text{Re } z \cdot e\biggl(\theta - \frac{\ell}{k}\biggr) \Biggr)
\, = \,
1 - \frac{\sin \frac{\pi}{g}}{k^* \, \tan \frac{\pi}{gk^*}} F_{gk^*}\!\left(-gk^*\theta\right)
\end{equation}
where
$$
F_N(\omega) = \cos \frac{2\pi\{\omega\}}{N} + \left(\tan
\frac{\pi}{N}\right) \sin \frac{2\pi\{\omega\}}{N} .
$$
\end{lemma}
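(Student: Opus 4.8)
The plan is to put the minimum over $\mu_g\cup\{0\}$ into closed form, to collapse the sum over $\ell\bmod k$ onto a grid of $k^*$ equally spaced points, and then to evaluate the resulting trigonometric sum directly. Write $\|x\|$ for the distance from $x$ to the nearest integer. The first observation is that for $g\geq3$ the value $z=0$ is never the minimizer: for $z=e(j/g)\in\mu_g$ we have $\operatorname{Re}\bigl(z\,e(\phi)\bigr)=\cos2\pi(\phi+j/g)$, and choosing $j$ to make $\phi+j/g$ closest to an integer gives $\max_{z\in\mu_g}\operatorname{Re}\bigl(z\,e(\phi)\bigr)=\cos\bigl(\tfrac{2\pi}{g}\|g\phi\|\bigr)\geq\cos(\pi/g)>0$. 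Consequently $\min_{z\in\mu_g\cup\{0\}}\bigl(1-\operatorname{Re}(z\,e(\phi))\bigr)=1-\cos\bigl(\tfrac{2\pi}{g}\|g\phi\|\bigr)$, and taking $\phi=\theta-\ell/k$ turns the left side of (\ref{Flem}) into $1-\tfrac1k\sum_{\ell\bmod k}\cos\bigl(\tfrac{2\pi}{g}\|g\theta-g\ell/k\|\bigr)$.

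Next I would collapse the $\ell$-sum. With $d=(g,k)$ one has $k^*=k/d$ and $\gcd(g/d,k^*)=1$, so the map $\ell\mapsto g\ell/k\equiv(g/d)\ell/k^*\pmod1$ sends $\Z/k\Z$ onto $\tfrac1{k^*}\Z/\Z$ exactly $d$-to-one, whence $\tfrac1k\sum_{\ell\bmod k}\cos\bigl(\tfrac{2\pi}{g}\|g\theta-g\ell/k\|\bigr)=\tfrac1{k^*}\sum_{j\bmod k^*}\cos\bigl(\tfrac{2\pi}{g}\|g\theta-j/k^*\|\bigr)$. A key structural point is that $k^*$ is even: $k$ is even while $(g,k)$ divides the odd number $g$ and so is odd, hence a factor of $2$ survives in $k^*=k/(g,k)$. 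Write $k^*=2M$, $\nu=\tfrac1{gk^*}$, and $s=\{gk^*\theta\}\in[0,1)$; as $j$ runs over $\Z/k^*\Z$ the numbers $\|g\theta-j/k^*\|$ run over $\tfrac1{k^*}\min(s+t,\,k^*-s-t)$ for $t=0,1,\dots,k^*-1$, so the sum above equals $\tfrac1{k^*}\sum_{t=0}^{k^*-1}\cos\bigl(2\pi\nu\min(s+t,\,k^*-s-t)\bigr)$.

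To evaluate this, split the $t$-sum at $t=M$ and reindex the upper half by $t\mapsto k^*-1-t$; this pairs the term $\cos\bigl(2\pi\nu(s+t)\bigr)$ with $\cos\bigl(2\pi\nu(1-s+t)\bigr)$, and the identity $\cos A+\cos B=2\cos\tfrac{A+B}{2}\cos\tfrac{A-B}{2}$ gives $\tfrac2{k^*}\cos\bigl(\pi\nu(2s-1)\bigr)\sum_{t=0}^{M-1}\cos\bigl(\pi\nu(2t+1)\bigr)$. The inner Dirichlet-kernel sum is $\cos(\pi\nu M)\tfrac{\sin\pi\nu M}{\sin\pi\nu}=\tfrac{\sin2\pi\nu M}{2\sin\pi\nu}$, and since $\nu M=\tfrac1{2g}$ this equals $\tfrac{\sin(\pi/g)}{2\sin\pi\nu}$. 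Finally, writing $\cos\bigl(\pi\nu(2s-1)\bigr)=\cos\bigl(2\pi\nu(1-s)\bigr)\cos\pi\nu+\sin\bigl(2\pi\nu(1-s)\bigr)\sin\pi\nu$ and using $\{-gk^*\theta\}=1-s$ (valid when $s\neq0$), the whole expression becomes $\tfrac{\sin(\pi/g)}{k^*\tan\pi\nu}\bigl[\cos\bigl(2\pi\nu(1-s)\bigr)+\tan\pi\nu\,\sin\bigl(2\pi\nu(1-s)\bigr)\bigr]=\tfrac{\sin(\pi/g)}{k^*\tan(\pi/(gk^*))}\,F_{gk^*}(-gk^*\theta)$, which is precisely (\ref{Flem}). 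The borderline case $s=0$ (that is, $gk^*\theta\in\Z$) follows either by direct substitution, both sides collapsing to $1-\tfrac{\sin(\pi/g)}{k^*\tan(\pi/(gk^*))}$, or more slickly by noting that $F_N$ is continuous across the integers (its one-sided limits both equal $1$) while the left side of (\ref{Flem}) is manifestly continuous in $\theta$, so the identity propagates from the dense set $\{\theta:gk^*\theta\notin\Z\}$ to all $\theta$.

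I expect the only delicate part to be the bookkeeping in the last step — the split at $t=M$, the reindexing, and tracking the cancellation $\nu M=1/(2g)$ — everything else being structural. It is worth stressing that the evenness of $k^*$ is exactly what allows the $t$-sum to split into two halves of equal length, and this is the one place where the hypotheses "$g$ odd, $k$ even" genuinely enter.
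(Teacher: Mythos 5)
Your proof is correct; every step checks out, including the two points that genuinely require care: the verification that $z=0$ never attains the minimum (so the minimum is $1-\cos\bigl(\tfrac{2\pi}{g}\|g\phi\|\bigr)$ with $\|\cdot\|$ the distance to the nearest integer), and the treatment of the boundary case $gk^*\theta\in\Z$, where you correctly note that the one-sided limits of $F_N$ at integers agree (indeed $\cos\tfrac{2\pi}{N}+\tan\tfrac{\pi}{N}\sin\tfrac{2\pi}{N}=1$), so the identity extends by continuity.

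The overall strategy coincides with the paper's — reduce the minimum to closed form, collapse the sum from $\Z/k\Z$ to $k^*$ equally spaced points, and evaluate a $k^*$-term geometric progression with ratio $e\bigl(\pm\tfrac{1}{gk^*}\bigr)$ — but your intermediate machinery is different. The paper keeps everything complex: it introduces arcs $\mathcal{A}_n$ to define the nearest $g$-th root of unity $n_\ell$, sets $f(\ell)=-(g\ell+kn_\ell)$, and proves three auxiliary statements (that $f$ descends to $\Z/k^*\Z$, that it is injective there, and that its image is exactly the multiples of $d$ in an interval of length $k$) before summing $\sum_m e\bigl(\tfrac{m}{gk^*}\bigr)$ and taking real parts at the end. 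You take real parts immediately, replace the injectivity analysis by the cleaner observation that $\ell\mapsto(g/d)\ell$ is a $d$-to-one surjection of $\Z/k\Z$ onto $\Z/k^*\Z$ (using $\gcd(g/d,k^*)=1$), and then evaluate the resulting cosine sum by folding $t\mapsto k^*-1-t$ and a product-to-sum identity. The trade-off: your route is shorter and makes the role of the evenness of $k^*$ (the split into two equal halves) completely explicit, while the paper's route makes more transparent why exactly the multiples of $d$ appear. Both are valid; nothing is missing from yours.
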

To make sense of this lemma, we examine some properties of
$F_N(\omega)$. First, since $F_N(\omega) = F_N(\{\omega\})$ we may
assume that $\omega \in [0,1)$. Second, since $k^*$ must be even,
$gk^* \geq 6$, and we can therefore assume that $N \geq 6$. Under
these assumptions, one easily checks that
\begin{description}
%\vspace{-0.25in}
\item[({\em i})] $F_N(0)=1$ and $F_N(0.5) = \frac{1}{\cos \frac{\pi}{N}}$, \label{FnPropPage}

\item[({\em ii})] $F_N(\omega)$ is concave down everywhere on $[0,1)$,

\item[({\em iii})] On the unit interval, $F_N$ is symmetric about $\omega = \frac{1}{2}$, and

\item[({\em iv})] The average value of $F_N$ over the unit interval is $\frac{N}{\pi} \tan \frac{\pi}{N}$.
\end{description}
Thus, for the `typical' $\theta$ we expect the right
side of (\ref{Flem}) to be $\delta_g$. It is appreciably larger
than $\delta_g$ when $gk^*\theta$ is close to an integer, and
somewhat smaller than $\delta_g$ when $gk^*\theta$ is close to
a half-integer. In the context of \cite{GSPretentiousCharactersAndPV}, $\theta = 0$, which
allowed Granville and Soundararajan to bound (\ref{Flem}) from below
by $\delta_g$ quite easily. Although our arguments are also not
difficult, the computations are naturally somewhat more involved; we
will isolate the proof in a separate subsection.

Before proving the two lemmata, we deduce from them a lower bound
on (\ref{interval}). The main term of (\ref{sect2bigsum}) can be bounded from
below as follows, for all $x \geq \exp\bigl((\log y)^\epsilon\bigr)$:
\begin{eqnarray*}
\sum_{x < p \leq (1 + \delta) x} \frac{1}{p} \left(1 - \text{Re } \chi \overline{\xi}(p) e(\theta_x) \right)
\!\!
& \geq &
\!\!\!\!\!\!
\sum_{\ell \, (\text{mod } k)}
\left(
\mathop{\sum_{x < p \leq (1+\delta)x}}_{\xi(p) = e\left(\frac{\ell}{k}\right)} \frac{1}{p}
\right)
\min_{z \in \mu_g \cup \{0\}} \Biggl(1 - \text{Re } z \cdot
e\biggl(\theta_x - \frac{\ell}{k}\biggr) \Biggr) \\
& = &
\frac{\delta \bigl( 1 + o(1) \bigr)}{\log x}
\left(
1 - \frac{\sin \frac{\pi}{g}}{k^* \tan \frac{\pi}{gk^*}} \, F_{gk^*}(-gk^* \theta_x)
\right)
\end{eqnarray*}
Let
$$
G(t) = 1 - \frac{\sin \frac{\pi}{g}}{k^* \tan \frac{\pi}{gk^*}} \, F_{gk^*}\left(\frac{\beta g k^*}{2\pi} \, t\right) .
$$
Note that $G$ is minimized at values of $t$ for which $F_{gk^*}$ is maximized, whence $G(t) \geq 1 - \frac{\sin \frac{\pi}{g}}{k^* \sin \frac{\pi}{g k^*}}$. It follows that as a function of $t$, $G(t)$ is bounded away from 0. This combined with our choice of $\delta$ of size $(\log y)^{-3}$ shows that we can bound (\ref{sect2bigsum}) as follows:
\begin{eqnarray}
\sum_{x < p \leq (1 + \delta) x} \frac{1}{p} \left(1 - \text{Re } \chi \overline{\xi}(p) p^{-i\beta}\right)
& \geq &
\frac{\bigl(1 + o(1) \bigr) \delta }{\log x} \, G(\log x) + O\left(\frac{\delta^2 \log^2 y}{\log x}\right) \nonumber \\
& = &
\frac{\bigl( 1 + o(1) \bigr) \delta }{\log x} \, G(\log x)  \label{lowerboundshortint}
\end{eqnarray}
where the $o(1)$ term in (\ref{lowerboundshortint}) tends to 0 as $y \to \infty$ and
depends only on $y$, $\epsilon$, $g$, and $k$.

We now go back and prove the two lemmata.

\begin{proof}[Proof of Lemma \ref{SWapp}]
A consequence of the Siegel-Walfisz Theorem says that for any fixed
$\epsilon>0$ and $A>0$, for all $X \geq \exp \left(m^\epsilon\right)$,
$$
\theta(X; m, a) := \!\!\!\!\!\!
\mathop{\sum_{p \leq X}}_{p \, \equiv \, a \, (\modulo m)}
\!\!\!\!\!\! \log p
=
\frac{X}{\varphi(m)} \Biggl( 1 + O\biggl(\frac{1}{(\log X)^{A}}\biggr) \Biggr)
$$
where the constant implicit in the $O$-term depends only upon $A$ and $\epsilon$.
In particular, for all $X \geq \exp \bigl((\log y)^\epsilon\bigr)$,
\begin{equation}
\label{ourSW}
\theta(X; m, a)
=
\frac{X}{\varphi(m)} \Biggl( 1 + O_\epsilon\biggl(\frac{1}{(\log X)^{4 / \epsilon}}\biggr) \Biggr)
\end{equation}
where the implicit constant only depends on $\epsilon$.

To apply Siegel-Walfisz, we must first express the sum in question
as a sum over primes in arithmetic progressions:
$$
\mathop{\sum_{x < p \leq (1+\delta)x}}_{\xi(p) =
e\left(\frac{\ell}{k}\right)} \frac{1}{p} = \mathop{\sum_{a \,
(\modulo m)}}_{\xi(a) = e\left(\frac{\ell}{k}\right)}
\mathop{\sum_{x < p \leq (1+\delta)x}}_{p \, \equiv \, a \, (\modulo
m)} \frac{1}{p} .
$$
Note that ${x < p \leq (1+\delta)x}$ is equivalent to ${\frac{1}{1+\delta} \, p \leq x
< p}$, whence
$$
\frac{x\log x}{p\log p} = \frac{x}{p} \cdot \frac{\log x}{\log p} =
\bigl(1 + O(\delta)\bigr) \cdot \Biggl(1 +
O\biggl(\frac{\delta}{\log p}\biggr)\Biggr) = 1 + O(\delta) .
$$
Combining this with (\ref{ourSW}) and the hypotheses on the sizes of $x$ and $\delta$ yields
\begin{eqnarray}
\mathop{\sum_{x < p \leq (1+\delta)x}}_{p \, \equiv \, a \, (\modulo m)} \frac{1}{p}
&=&
\frac{1+ O(\delta)}{x \log x}
\mathop{\sum_{x < p \leq (1+\delta)x}}_{p \, \equiv \, a \, (\modulo m)} \!\!\!\!\!\! \log p \nonumber \\
&=&
\label{innersum}
\frac{\delta}{\varphi(m) \, \log x} \Biggl( 1 + O_\epsilon \left(\frac{1}{\log y}\right) \Biggr) .
\end{eqnarray}
Since this estimate is independent of $a$, to prove the lemma it remains only to show that
\begin{equation}
\label{xiequidist} \mathop{\sum_{a \, (\modulo m)}}_{\xi(a) \, = \,
e\left(\frac{\ell}{k}\right)} \!\!\! 1 = \frac{\varphi(m)}{k} .
\end{equation}
For brevity, denote $\left(\Z / m\Z\right)^*$ by $G$. Since $\xi$
has order $k$, there is some $b \in G$ such that $1, \xi(b),
\xi(b)^2, \ldots, \xi(b)^{k-1}$ are all distinct; on the other hand,
all these must be $k^\text{th}$ roots of unity. In particular, there
exists some $g \in G$ such that $\xi(g) =
e\left(\frac{1}{k}\right)$.

Let $H$ be the kernel of $\xi$, i.e. $H = \{ a \in G : \xi(a) = 1\}$. This is a
normal subgroup of $G$, and $g^\ell H = \left\{ a \in G : \xi(a) =
e\left(\frac{\ell}{k}\right) \right\}$. $G$ can therefore be
decomposed as a disjoint union of the $k$ cosets $g^\ell H$ with $0
\leq \ell \leq k-1$. Since $\left|g^\ell H \right| = \left| H
\right|$, (\ref{xiequidist}) must hold. Combining this with
(\ref{innersum}) yields the lemma.
\end{proof}

\subsubsection{Proof of Lemma \ref{summin}}

Recall that $g \geq 3$ is odd, $k \geq 2$ is even, and $\theta \in
\left(-\frac{1}{2}, \frac{1}{2}\right]$. Let $d = (g,k)$ and set
$k^* = \frac{k}{d}$ and $g^* = \frac{g}{d}$.

To prove (\ref{Flem}), it suffices to show
\begin{equation}
\label{two} \sum_{\ell \, (\text{mod } k)} \max_{z \in \mu_g \cup
\{0\}} \text{Re } z \cdot e\biggl(\theta - \frac{\ell}{k}\biggr) = d
\cdot \frac{\sin \frac{\pi}{g}}{\tan \frac{\pi}{gk^*}} \cdot
F_{gk^*}\!\left(-gk^*\theta\right)
\end{equation}
Let $\mathcal{A}_{{}_0} = \left\{e(\beta) : -\frac{1}{2g} < \beta
\leq \frac{1}{2g} \right\}$ and set $\mathcal{A}_n =
e\!\left(\frac{n}{g}\right) \mathcal{A}_{{}_0}$; note that the
disjoint union of $\mathcal{A}_n$ as $n$ runs over any complete set
of residues of $\Z / g\Z$ is the complex unit circle. In particular,
for any $\ell \in \Z$ there is a unique $n_{{}_\ell} \in
\left(-\frac{g}{2}, \frac{g}{2}\right]$ such that $e\!\left(\theta -
\frac{\ell}{k}\right) \in \mathcal{A}_{n_{{}_\ell}}$. By definition,
this means that $e\!\left(-\frac{n_{{}_\ell}}{g}\right)
e\!\left(\theta - \frac{\ell}{k}\right) \in \mathcal{A}_{{}_0}$.
Since for all other $n \in \left(-\frac{g}{2}, \frac{g}{2}\right]$
we have $e\!\left(-\frac{n}{g}\right) e\!\left(\theta -
\frac{\ell}{k}\right) \not\in \mathcal{A}_{{}_0}$, we deduce that
\begin{eqnarray*}
\max_{z \in \mu_g \cup \{0\}} \text{Re } z \cdot e\!\left(\theta - \frac{\ell}{k}\right)
&=&
\text{Re } e\!\left(- \frac{n_{{}_\ell}}{g}\right) \, e\!\left(\theta - \frac{\ell}{k}\right) \\
&=&
\text{Re } e(\theta) \, e\!\left(\frac{f(\ell)}{gk}\right)
\end{eqnarray*}
where $f : \Z \to \Z$ is defined $f(\ell) = -(g\,\ell + k
\,n_{{}_\ell})$. This allows us to rewrite the left hand side of the
inequality (\ref{two}):
\begin{equation}
\label{three}
\sum_{\ell \, (\text{mod } k)} \max_{z \in \mu_g \cup \{0\}}
\text{Re } z \cdot e\biggl(\theta - \frac{\ell}{k}\biggr) =
\text{Re } e(\theta) \sum_{\ell \, (\text{mod } k)} e\left(\frac{f(\ell)}{gk}\right) .
\end{equation}
Our aim is rewrite the sum on the right side of (\ref{three}) in
terms of geometric series.

It is not hard to see that if $\ell_1 \equiv \ell_2 \, (\text{mod }
k)$ then $f(\ell_1) \equiv f(\ell_2) \, (\text{mod } gk)$. However,
more is true:
\begin{lemma}
\label{lem1}
$\ell_1 \equiv \ell_2 \, (\text{mod } k^*) \Longrightarrow f(\ell_1) \equiv f(\ell_2) \, (\text{mod } gk)$
\end{lemma}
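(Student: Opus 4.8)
The plan is to unwind the definition of $n_{{}_\ell}$ and extract an exact congruence for $n_{{}_{\ell_1}}-n_{{}_{\ell_2}}$ modulo $g$. Recall that $n_{{}_\ell}$ is characterized by $e\!\left(-\frac{n_{{}_\ell}}{g}\right)e\!\left(\theta-\frac{\ell}{k}\right)\in\mathcal{A}_0$; equivalently, there exist an integer $j_\ell$ and a real number $\beta_\ell\in\left(-\frac{1}{2g},\frac{1}{2g}\right]$ with
$$
\theta-\frac{\ell}{k}-\frac{n_{{}_\ell}}{g}=\beta_\ell+j_\ell .
$$
First I would write this identity for $\ell=\ell_1$ and $\ell=\ell_2$ and subtract, using the hypothesis $\ell_1\equiv\ell_2\ (\mathrm{mod}\ k^*)$ to put $\ell_1-\ell_2=k^*t$, so that $\frac{\ell_1-\ell_2}{k}=\frac{t}{d}$ (since $k=dk^*$). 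Multiplying the resulting relation through by $g=dg^*$ turns it into
$$
-g^*t-\bigl(n_{{}_{\ell_1}}-n_{{}_{\ell_2}}\bigr)=g(\beta_1-\beta_2)+g(j_1-j_2).
$$

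The crux of the argument is then the observation that the left-hand side is an integer, while $\beta_1-\beta_2\in\left(-\frac1g,\frac1g\right)$ forces $g(\beta_1-\beta_2)\in(-1,1)$; since $g(j_1-j_2)$ is a multiple of $g$, the quantity $g(\beta_1-\beta_2)$ is an integer lying in $(-1,1)$, hence equals $0$. (In passing this shows $\beta_1=\beta_2$, a Lipschitz-type stability of $n_{{}_\ell}$.) Consequently $n_{{}_{\ell_1}}-n_{{}_{\ell_2}}=-g^*t-g(j_1-j_2)$, so in particular
$$
n_{{}_{\ell_1}}-n_{{}_{\ell_2}}\equiv-g^*t\ (\mathrm{mod}\ g).
$$

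Finally I would substitute into $f(\ell_1)-f(\ell_2)=-g(\ell_1-\ell_2)-k\bigl(n_{{}_{\ell_1}}-n_{{}_{\ell_2}}\bigr)=-gk^*t-k\bigl(n_{{}_{\ell_1}}-n_{{}_{\ell_2}}\bigr)$. Writing $n_{{}_{\ell_1}}-n_{{}_{\ell_2}}=-g^*t+gu$ with $u=-(j_1-j_2)\in\Z$ and using the identity $gk^*=g^*k=\operatorname{lcm}(g,k)$, the terms $-gk^*t$ and $kg^*t$ cancel, leaving $f(\ell_1)-f(\ell_2)=-gku\equiv0\ (\mathrm{mod}\ gk)$, which is the assertion. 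I expect the only delicate point to be the "unique integer in an interval of length less than $1$" step that pins down $g(\beta_1-\beta_2)=0$ and thereby yields an \emph{exact} (not merely approximate) relation for $n_{{}_{\ell_1}}-n_{{}_{\ell_2}}$; the remainder is bookkeeping with $k=dk^*$, $g=dg^*$, and $gk^*=g^*k$.
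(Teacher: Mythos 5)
Your proof is correct, and it is essentially the paper's argument written additively: the paper phrases the key step as two points of the form (a $g$-th root of unity)$\,\times\,$(a fixed point) lying in the same translate of the arc $\mathcal{A}_0$ of length $\frac{1}{g}$, which is exactly your observation that $g(\beta_1-\beta_2)$ is an integer in $(-1,1)$ and hence zero. The concluding bookkeeping with $gk^*=g^*k$ likewise matches the paper's deduction that $e\bigl(f(\ell_1)/gk\bigr)=e\bigl(f(\ell_2)/gk\bigr)$.
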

\begin{proof}
Given $\ell_1 \equiv \ell_2 \, (\text{mod } k^*)$. Then $k \mid g\,
(\ell_2-\ell_1)$, since $g(\ell_2 - \ell_1) = g^* k \, \frac{\ell_2
- \ell_1}{k^*}$. Equivalently, there exists $m \in \Z$ such that
$-\frac{\ell_1}{k} = -\frac{\ell_2}{k} + \frac{m}{g}$. Therefore, by
the definition of $n_{{}_\ell}$, we find that both $e\!\left(\frac{m
- n_{{}_{\ell_1}}}{g}\right)$ and
$e\!\left(-\frac{n_{{}_{\ell_2}}}{g}\right)$ belong to the set
$e\!\left(\frac{{\ell_2}}{k} - \theta \right) \mathcal{A}_{{}_0}$.
But this implies that ${n_{{}_{\ell_1}} \equiv m + n_{{}_{\ell_2}} \,
(\text{mod } g)}$, whence
$$
e\!\left(\frac{f(\ell_1)}{gk}\right) = e\!\left(\frac{f(\ell_2)}{gk}\right)
$$
and we conclude.
\end{proof}

Thus, we can restrict the sum on the right side of (\ref{three}) to
$\Z / k^* \Z$:
\begin{equation}
\label{four} \sum_{\ell \, (\text{mod } k)}
e\!\left(\frac{f(\ell)}{gk}\right) = d \cdot \!\!\!\!\! \sum_{\ell^*
\, (\text{mod } k^*)} e\!\left(\frac{f(\ell^*)}{gk}\right) .
\end{equation}

We now prove a weaker form of Lemma \ref{lem1}, which has the advantage of a converse.
\begin{lemma}
\label{lem2}
$\ell_1 \equiv \ell_2 \, (\text{mod } k^*) \Longleftrightarrow f(\ell_1) \equiv f(\ell_2) \, (\text{mod } k)$
\end{lemma}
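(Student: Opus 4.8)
The plan is to collapse the biconditional to a single elementary congruence. The crucial observation is that the term $k\,n_{{}_\ell}$ in the defining formula $f(\ell) = -(g\,\ell + k\,n_{{}_\ell})$ is divisible by $k$, so that
$$
f(\ell) \equiv -g\,\ell \quad (\text{mod } k)
$$
for every integer $\ell$. (Here I am using that $n_{{}_\ell}$ is a genuine integer: by construction it is the unique representative lying in $\left(-\tfrac{g}{2}, \tfrac{g}{2}\right]$ of a residue class modulo $g$, and since $g$ is odd this half-open interval contains exactly $g$ integers, which form a complete set of residues modulo $g$. This is the only point requiring any care, and it is already built into the discussion preceding the statement.)

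Granted this reduction, I would argue as follows. The congruence $f(\ell_1) \equiv f(\ell_2) \ (\text{mod } k)$ holds if and only if $g(\ell_1 - \ell_2) \equiv 0 \ (\text{mod } k)$, i.e. $k \mid g(\ell_1 - \ell_2)$. Now invoke the standard divisibility fact that, for any integer $a$, one has $k \mid a n$ if and only if $\tfrac{k}{(k,a)} \mid n$; taking $a = g$ and recalling that $d = (g,k)$, $k^{*} = k/d$, and $\bigl(k^{*}, \tfrac{g}{d}\bigr) = 1$, this reads $k \mid g(\ell_1 - \ell_2) \iff k^{*} \mid \ell_1 - \ell_2$. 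Since the right-hand side is exactly $\ell_1 \equiv \ell_2 \ (\text{mod } k^{*})$, both implications of the lemma follow simultaneously.

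I should note that the forward implication could alternatively be extracted directly from Lemma \ref{lem1}, which in fact yields the stronger conclusion modulo $gk$; the virtue of the argument just sketched is that it produces the converse for free, so I would present it as one unified computation rather than splitting into cases. I do not anticipate any genuine obstacle here: the lemma is pure bookkeeping about the map $f$, and its only role is to supply the converse direction, which will be needed in the next step to control the values of $f$ on $\Z/k^{*}\Z$ and thereby recast the sum $\sum_{\ell} e\bigl(f(\ell)/(gk)\bigr)$ from \eqref{three} as a clean geometric series.
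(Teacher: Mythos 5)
Your proof is correct and is essentially the paper's argument: both reduce to the observation that $f(\ell) \equiv -g\ell \pmod{k}$ (since $k \mid k\,n_{{}_\ell}$) and then exploit $(g^*,k^*)=1$. The only difference is presentational — the paper verifies the two implications separately, while you package them into the single standard equivalence $k \mid g(\ell_1-\ell_2) \iff k^* \mid \ell_1-\ell_2$, which is a slightly tidier way of saying the same thing.
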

\begin{proof}
\begin{eqnarray*}
f(\ell_1) \equiv f(\ell_2) \, (\text{mod } k)
& \Longrightarrow & k \mid g(\ell_2 - \ell_1) \\
& \Longrightarrow & k^* \mid g^*(\ell_2 - \ell_1) \\
& \Longrightarrow & \ell_1 \equiv \ell_2 \, (\text{mod } k^*)
\end{eqnarray*}
since $(g^*, k^*) = 1$. On the other hand,
$$
\ell_1 \equiv \ell_2 \, (\text{mod } k^*) \Longrightarrow k \mid d(\ell_2 - \ell_1)
$$
whence
\begin{eqnarray*}
f(\ell_1) - f(\ell_2) &=& g(\ell_2 - \ell_1) + k(n_{\ell_1} - n_{\ell_2}) \\
&=& g^*d(\ell_2 - \ell_1) + k(n_{\ell_1} - n_{\ell_2}) \\
& \equiv & 0 \, (\text{mod } k).
\end{eqnarray*}
\end{proof}

\begin{prop}
\label{prop3} The map $f$ restricted to $\left[-\frac{k^*}{2} + k^*
\theta, \frac{k^*}{2} + k^* \theta\right) \cap \Z$ is an injection
into
$$
\left(-\frac{k}{2} - g k \theta, \frac{k}{2} - g k \theta\right] \cap \Z .
$$
\end{prop}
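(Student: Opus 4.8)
The plan is to establish two things: (a) $f$ maps the domain $D:=\left[-\tfrac{k^*}{2}+k^*\theta,\ \tfrac{k^*}{2}+k^*\theta\right)\cap\Z$ into the target $I:=\left(-\tfrac{k}{2}-gk\theta,\ \tfrac{k}{2}-gk\theta\right]\cap\Z$, and (b) $f$ is injective on $D$. Fact (b) is immediate from Lemma \ref{lem2}: the set $D$ consists of exactly $k^*$ consecutive integers, hence represents each residue class modulo $k^*$ exactly once, so distinct $\ell_1,\ell_2\in D$ satisfy $\ell_1\not\equiv\ell_2\pmod{k^*}$, whence $f(\ell_1)\not\equiv f(\ell_2)\pmod{k}$ and in particular $f(\ell_1)\neq f(\ell_2)$. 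The real content is therefore the containment in (a).

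For (a), set $w=w(\ell):=\theta-\tfrac{\ell}{k}$. From $f(\ell)=-(g\ell+kn_\ell)$ we get $\tfrac{f(\ell)}{k}+g\theta=-\tfrac{g\ell}{k}-n_\ell+g\theta=gw-n_\ell$, so that $f(\ell)\in I$ if and only if $gw-n_\ell\in\left(-\tfrac12,\tfrac12\right]$. Now recall that $n_\ell$ is the unique integer with $n_\ell\in\left(-\tfrac{g}{2},\tfrac{g}{2}\right]$ — hence, since $g$ is odd, $-\tfrac{g-1}{2}\le n_\ell\le\tfrac{g-1}{2}$ — such that $e(-n_\ell/g)\,e(w)\in\mathcal{A}_0$, i.e. $w-\tfrac{n_\ell}{g}\in\left(-\tfrac{1}{2g},\tfrac{1}{2g}\right]\pmod{1}$; multiplying this congruence by $g$ gives
\[
gw-n_\ell\equiv\varepsilon\pmod{g}\qquad\text{for some }\varepsilon\in\left(-\tfrac12,\tfrac12\right].
\]
Thus it suffices to confine $gw-n_\ell$ to an interval of length $2g-1$, positioned so that $\left(-\tfrac12,\tfrac12\right]$ is the unique coset representative modulo $g$ lying inside it, and then conclude $gw-n_\ell=\varepsilon$.

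This is where the restriction $\ell\in D$ is used. Rearranging $-\tfrac{k^*}{2}+k^*\theta\le\ell<\tfrac{k^*}{2}+k^*\theta$ gives $\theta-\tfrac{\ell}{k^*}\in\left(-\tfrac12,\tfrac12\right]$, hence (dividing by $d$ and using $k=dk^*$, $g=dg^*$)
\[
w=\theta-\frac{\ell}{k}=\Bigl(\tfrac{\theta}{d}-\tfrac{\ell}{k}\Bigr)+\theta\cdot\tfrac{d-1}{d}\in\Bigl(\theta\tfrac{d-1}{d}-\tfrac{1}{2d},\ \theta\tfrac{d-1}{d}+\tfrac{1}{2d}\Bigr],
\]
so $gw\in\left(g^*\theta(d-1)-\tfrac{g^*}{2},\ g^*\theta(d-1)+\tfrac{g^*}{2}\right]$. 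Combining this with $-\tfrac{g-1}{2}\le n_\ell\le\tfrac{g-1}{2}$ and $|\theta|\le\tfrac12$, simplifying via $g^*d=g$ and tracking the open/closed endpoints, one finds $gw-n_\ell\in\left(-g+\tfrac12,\ g-\tfrac12\right]$. Since $\varepsilon+gj>g-\tfrac12$ for every integer $j\ge1$ and $\varepsilon+gj\le-g+\tfrac12$ for every integer $j\le-1$, the displayed congruence forces $gw-n_\ell=\varepsilon\in\left(-\tfrac12,\tfrac12\right]$. Therefore $f(\ell)+gk\theta=k(gw-n_\ell)\in\left(-\tfrac{k}{2},\tfrac{k}{2}\right]$, i.e. $f(\ell)\in I$, and the proposition follows.

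I expect the endpoint bookkeeping in the last paragraph to be the only delicate point: one must turn the domain condition (phrased in terms of $k^*\theta$, not $k\theta$) into a bound on $w$ that is sharp to within the open/half-open distinctions, and one must invoke the oddness of $g$ to improve $|n_\ell|\le\tfrac{g}{2}$ to $|n_\ell|\le\tfrac{g-1}{2}$ — both ingredients are needed to squeeze $gw-n_\ell$ into an interval of length exactly $2g-1<2g$, which is precisely the margin that makes the reduction modulo $g$ have a unique outcome. The injectivity and the identity $\tfrac{f(\ell)}{k}+g\theta=gw-n_\ell$ are routine.
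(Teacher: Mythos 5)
Your proof is correct and follows essentially the same route as the paper: injectivity from Lemma \ref{lem2} via the fact that $D$ is a block of $k^*$ consecutive integers, and containment by using the defining property of $n_\ell$ together with $|n_\ell|\le\frac{g-1}{2}$ and the domain restriction to kill the integer ambiguity in $gw-n_\ell$. The only difference is bookkeeping: the paper first enlarges the domain to $\left[-\frac{k}{2}+k\theta,\frac{k}{2}+k\theta\right)$ using $|\theta|\le\frac12$, so that $w\in\left(-\frac12,\frac12\right]$ and the integer $N$ is forced to vanish immediately, whereas you work on the smaller interval directly and confine $gw-n_\ell$ to $\left(-g+\frac12,\,g-\frac12\right]$ by hand.
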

\begin{proof}
Injectivity follows immediately from Lemma \ref{lem2}, so it
suffices to show that the image of $\left[-\frac{k^*}{2} + k^*
\theta, \frac{k^*}{2} + k^* \theta\right) \cap \Z$ under $f$ lands
in the claimed target. In fact, we will show a slightly stronger
statement. Observe that because $|\theta| \leq \frac{1}{2}$,
$$
\left[-\frac{k^*}{2} + k^* \theta, \frac{k^*}{2} + k^* \theta\right)
\subseteq
\left[-\frac{k}{2} + k \theta, \frac{k}{2} + k \theta\right) ;
$$
we claim that the image under $f$ of the larger
set lands inside the claimed target.

Fix any $\ell \in \left[-\frac{k}{2} + k \theta, \frac{k}{2} + k
\theta\right)$; this is equivalent to requiring $\theta -
\frac{\ell}{k} \in \left(-\frac{1}{2}, \frac{1}{2}\right]$. By
definition of $n_{{}_\ell}$ we have $e\!\left(\theta -
\frac{\ell}{k}\right) \in \mathcal{A}_{n_{{}_\ell}}$, from which we
deduce that for some integer $N$,
$$
\theta - \frac{\ell}{k} \in
\left(N + \frac{2 n_{{}_\ell} - 1}{2g}, N + \frac{2 n_{{}_\ell} +
1}{2g}\right] .
$$
By our restriction on $\ell$, $N$ must equal 0
(recall that $-\frac{g-1}{2} \leq n_{{}_\ell} \leq \frac{g-1}{2}$).
It follows that $f(\ell) \in \left(-\frac{k}{2} - g k \theta,
\frac{k}{2} - g k \theta\right]$.
\end{proof}

Note that $d \, | \, f(\ell)$ for all $\ell$. Combining this fact with Proposition \ref{prop3} we conclude that
$$
\left\{f(\ell^*) : -\frac{k^*}{2} + k^* \theta \leq \ell^* < \frac{k^*}{2} + k^* \theta \right\}
$$
is a set of $k^*$ distinct multiples of $d$, all contained in
$\left(-\frac{k}{2} - g k \theta, \frac{k}{2} - g k \theta\right]$.
But by inspection, this interval contains \textit{precisely} $k^*$ multiples of $d$. Therefore:
\begin{eqnarray}
\sum_{\ell^* \, (\text{mod } k^*)} e\!\left(\frac{f(\ell^*)}{gk}\right)
& = &
\sum_{-\frac{k^*}{2} + k^* \theta \leq \ell^* < \frac{k^*}{2} + k^* \theta}
e\!\left(\frac{f(\ell^*)}{gk}\right) \nonumber \\
& = &
\sum_{\frac{1}{d}\left(-\frac{k}{2} - g k \theta\right) < m \leq \frac{1}{d}\left(\frac{k}{2} - g k \theta\right)} e\!\left(\frac{md}{gk}\right) \nonumber \\
& = &
\sum_{-\frac{k^*}{2} - g k^* \theta < m \leq \frac{k^*}{2} - g k^* \theta}
e\!\left(\frac{m}{gk^*}\right) \label{five}
\end{eqnarray}
This is a $k^*$-term geometric series with first term
$e\!\left(\frac{1}{gk^*}\left[\frac{k^*}{2} - gk^*\theta\right]\right)$ and ratio
$e\!\left(-\frac{1}{gk^*}\right)$.
Summing the series and performing standard algebraic manipulations, one finds
$$
\sum_{-\frac{k^*}{2} - g k^* \theta < m \leq \frac{k^*}{2} - g k^* \theta}
e\!\left(\frac{m}{gk^*}\right) =
e\!\left(-\theta + \frac{1 - 2c}{2gk^*}\right) \, \frac{\sin \frac{\pi}{g}}{\sin \frac{\pi}{gk^*}}
$$
where $c = \left\{- gk^*\theta\right\} \in [0,1)$. Tracing back
through equations (\ref{three})-(\ref{five}) and simplifying, we see
that
\begin{eqnarray*}
\sum_{\ell \, (\text{mod } k)} \max_{z \in \mu_g \cup \{0\}}
\text{Re } z \cdot e\biggl(\theta - \frac{\ell}{k}\biggr)
& = &
d \cdot \frac{\sin \frac{\pi}{g}}{\sin \frac{\pi}{gk^*}} \cdot \cos \left(\frac{\pi}{gk^*} (1-2c)\right) \\
& = & \label{six} d \cdot \frac{\sin \frac{\pi}{g}}{\tan
\frac{\pi}{gk^*}} \cdot F_{gk^*}\bigl(-gk^*\theta\bigr)
\end{eqnarray*}
proving (\ref{two}), and thus the lemma. \hfill $\qed$

\subsection{Completing the proof of Theorem \ref{GeneralizeLemma32}}

Let $x_0 = \exp\bigl((\log y)^\epsilon\bigr)$ and set $x_r = x_0 (1 + \delta)^r$. Then from (\ref{lowerboundshortint}) we deduce
\begin{eqnarray}
\D(\chi(n), \xi(n) \, n^{i\beta}; y)^2
&=&
\sum_{p \leq y} \frac{1}{p} \left(1 - \text{Re } \chi \overline{\xi}(p) p^{-i\beta}\right)
\nonumber \\
& \geq &
\sum_{x_0 < p \leq y} \frac{1}{p} \left(1 - \text{Re } \chi \overline{\xi}(p) p^{-i\beta}\right) \nonumber \\
& \geq &
\mathop{\sum_{r \geq 0}}_{x_{r+1} \leq y} \sum_{x_r < p \leq x_{r+1}}
\frac{1}{p} \left(1 - \text{Re } \chi \overline{\xi}(p) p^{-i\beta}\right) \nonumber \\
& \geq &
\mathop{\sum_{r \geq 0}}_{x_{r+1} \leq y} \frac{ \bigl( 1 + o(1) \bigr) \delta }{\log x_r} \, G(\log x_r) \nonumber \\
& \geq &
\bigl( 1 + o(1) \bigr)
\log (1+\delta) \mathop{\sum_{r \geq 0}}_{x_{r+1} \leq y} \frac{G(\log x_r) }{\log x_r}
\label{longchain}
\end{eqnarray}
We recognize the sum above as the left Riemann sum -- with subintervals of length ${\log (1+\delta)}$ -- for the integral
$\displaystyle \int_{\log x_0}^{\log x_m} \frac{G(t)}{t} \dx t$, where $m$ is the integer such that ${x_m \leq y < x_{m+1}}$.
Since
\begin{eqnarray*}
\left| \frac{d}{dt} \left(\frac{G(t)}{t}\right) \right|
& \leq &
\left|\frac{G'(t)}{t}\right| + \left| \frac{G(t)}{t^2} \right| \\
& \leq &
\frac{\frac{\sin \frac{\pi}{g}}{k^* \tan \frac{\pi}{gk^*}} \, F'_{gk^*}(0)}{\log x_0}
+ \frac{2}{(\log x_0)^2} \\
& \ll &
1
\end{eqnarray*}
for all $t \geq \log x_0$, we have
\begin{eqnarray*}
\left| \log (1+\delta) \mathop{\sum_{r \geq 0}}_{x_{r+1} \leq y} \frac{G(\log x_r) }{\log x_r}
- \int_{\log x_0}^{\log y} \frac{G(t)}{t} \, \dx t \right|
& \ll &
(\log y) \cdot \log (1+\delta)
+ \left| \int_{\log x_m}^{\log y} \frac{G(t)}{t} \, \dx t \right| \\
& \ll &
\frac{1}{\log^2 y}
\end{eqnarray*}
Therefore, continuing our calculation from where we left it in (\ref{longchain}),
\begin{equation}
\label{integral}
\D(\chi(n), \xi(n) \, n^{i\beta}; y)^2
\geq
\bigl( 1 + o(1) \bigr) \int_{\log x_0}^{\log y} \frac{G(t)}{t} \, \dx t + O(1) .
\end{equation}

To prove Theorem \ref{GeneralizeLemma32} it remains only to bound the integral on the right side of (\ref{integral}) from below by $\bigl(\delta_g + o(1) \bigr) \log \log y$.
Recall that
$$
G(t) =
1 - \frac{\sin \frac{\pi}{g}}{k^* \tan \frac{\pi}{gk^*}} \, F_{gk^*}\left(\frac{\beta g k^*}{2\pi} \, t\right)
$$
where $F_N(\omega) = \cos \frac{2\pi\{\omega\}}{N} + \left(\tan
\frac{\pi}{N}\right) \sin \frac{2\pi\{\omega\}}{N}$ is concave down everywhere on the unit interval and symmetric about $t = \frac{1}{2}$, with minima at the endpoints of the interval. Furthermore, $\overline{F_N}$, the mean value of $F_N$ on the unit interval, is $\frac{N}{\pi} \tan \frac{\pi}{N}$. Rewriting (\ref{integral}), we see that it suffices to prove that
$$
\int_{a(y)}^{b(y)} \frac{1}{t} F_{N}(t) \dx t
\leq
\bigl(\overline{F_{N}}+o(1)\bigr) \log \log y
$$
where $a(y) = \frac{N|\beta|}{2\pi} \, (\log y)^\epsilon$ and $b(y) = \frac{N|\beta|}{2\pi} \, \log y$.
(Note that $a(y)$ and $b(y)$ are expressed in terms of the magnitude of $\beta$, a change of variables we can make because $F_N$ is an even function.)
Given any $x \geq 1$ we find
$$
\int_1^x \frac{1}{t} \, F_N(t) \dx t = \overline{F_N} \cdot \log x + O(1)  ,
$$
by splitting the integral into unit intervals (with at most one exception) and on each interval bounding $\frac{1}{t}$ from above and below trivially. Thus if $a(y) \geq 1$, we immediately find
\begin{eqnarray*}
\int_{a(y)}^{b(y)} \frac{1}{t} \, F_N(t) \dx t
&=&
\overline{F_N} \cdot \log \frac{b(y)}{a(y)} + O(1) \\
& \leq &
\Bigl(\overline{F_N} + o(1) \Bigr) \, \log\log y  .
\end{eqnarray*}

Now we consider the case when $a(y) < 1$. Note that we may take $b(y) \geq 1$: from the discussion directly following equation (\ref{BoundFromGenLemma32}) we see that we can assume $\displaystyle |\beta| \geq \frac{C_0 (\log \log y)^{1/2}}{\log y}$ for any positive constant $C_0$, and since $y \geq 3$ and $N = g k^* \geq 6$, choosing $\displaystyle C_0 = \frac{2 \pi}{6} \, (\log \log 3)^{-1/2}$ makes $b(y) \geq 1$.
Therefore, \begin{eqnarray*}
\int_{a(y)}^{b(y)} \frac{1}{t} \, F_N(t) \dx t
&=&
\int_{a(y)}^{1} \frac{1}{t} \, F_N(t) \dx t + \int_{1}^{b(y)} \frac{1}{t} \, F_N(t) \dx t \\
&=&
\int_1^{\frac{1}{a(y)}} \frac{1}{t} \, F_N\left(\frac{1}{t}\right) \dx t + \overline{F_N} \cdot \log b(y) + O(1)
\end{eqnarray*}
It remains only to show that
\begin{equation}
\label{suffices}
\int_1^x \frac{1}{t} \, F_N\left(\frac{1}{t}\right) \dx t \leq
\overline{F_N} \cdot \log x + O(1) .
\end{equation}
Because $F_N$ is concave down on $[0,1)$, we see that for all sufficiently large $x$, $F_N\Bigl(\frac{1}{x}\Bigr) \leq \overline{F_N}$. Therefore,
$$
\frac{d}{\dx x}\Biggl(\int_1^x \frac{1}{t} \, F_N\left(\frac{1}{t}\right) \dx t\Biggr)
\leq
\frac{d}{\dx x}\left(\overline{F_N} \cdot \log x\right)
$$
for all large $x$. This implies (\ref{suffices}), and Theorem \ref{GeneralizeLemma32} is proved.
\hfill $\qed$

\section{Proof of Theorem 3}
\label{SectCharLowerBd}

All results stated and proved in this section are conditional on the Generalized Riemann Hypothesis.

In Theorem 2 we proved that
\[
| S_\chi(t) | \ll_g \sqrt{q} (\log \log q)^{1 - \delta_g + o(1)}
\]
for any primitive character $\chimodq$ of odd order $g \geq 3$. The goal of this section is to construct an infinite family of characters $\chimodq$ of order $g$ such that
\[
\max_{t \leq q}| S_\chi(t) | \gg_{\epsilon,g} \sqrt{q} (\log \log q)^{1 - \delta_g - \epsilon}
\]
thus showing that the constant $1-\delta_g$ in our upper bound cannot be improved. We note that when $g$ is squarefree, the dependence of the implicit constant on $g$ can be made explicit from our construction.

We first quote a result of Granville and Soundararajan:
\begin{theorem}[see Theorem 2.5 of \cite{GSPretentiousCharactersAndPV}]
\label{Thm:GS25}
Assume the GRH. Given a primitive character $\chimodq$, let ${\xi \, (\text{mod } m)}$ be a primitive character of opposite parity to $\chi$. Then
\[
\max_{t \leq q} |S_\chi(t)|  + \frac{\sqrt{m}}{\varphi(m)} \sqrt{q} \log \log \log q
\gg
\frac{\sqrt{m}}{\varphi(m)} \sqrt{q} (\log \log q) e^{-\D(\chi,\, \xi;\, \log q)^2}
\]
\end{theorem}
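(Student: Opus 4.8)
(This is Theorem~2.5 of \cite{GSPretentiousCharactersAndPV}; here is the shape of the argument one would give.) The plan is to exhibit a single value $t \le q$ at which $|S_\chi(t)|$ is provably large, by letting the character sum resonate with the pretending character $\xi$. Two trivial reductions come first. One may assume $m$ is not too large --- say $m \le (\log q)^{3}$ --- since otherwise $\tfrac{\sqrt m}{\varphi(m)}\sqrt q\,(\log\log q)\,e^{-\D(\chi,\xi;\log q)^2} \ll \sqrt q$ and the claim follows from the unconditional bound $\max_{t\le q}|S_\chi(t)| \gg \sqrt q$ recorded in the introduction; similarly one may assume that $e^{-\D(\chi,\xi;\log q)^2}$ is not itself smaller than a fixed power of $(\log\log q)^{-1}$.

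Next I would apply P\'olya's Fourier expansion (\ref{PolyasFourierExpansion}) with $N = q$. Since $|\tau(\chi)| = \sqrt q$ for primitive $\chi$, this yields, for every $t$,
$$
|S_\chi(t)| \;\geq\; \frac{\sqrt q}{2\pi}\,\Bigl|\sum_{1\le|n|\le q}\frac{\overline\chi(n)}{n}\bigl(1 - e(-nt/q)\bigr)\Bigr| \;-\; O(\log q),
$$
so it suffices to produce a $t$ for which the exponential sum on the right is $\gg \tfrac{\sqrt m}{\varphi(m)}(\log\log q)\,e^{-\D(\chi,\xi;\log q)^2}$; the term $O(\log q)$ and any further error of size $O\!\bigl(\tfrac{\sqrt m}{\varphi(m)}\log\log\log q\bigr)$ are then absorbed into the additive slack of the theorem after restoring the factor $\sqrt q$. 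Fix an integer $b$ with $1\le b\le m$ and $(b,m)=1$ and take $t$ to be the integer nearest $qb/m$; then $|t/q - b/m|\le 1/(2q)$, so replacing $e(-nt/q)$ by $e(-nb/m)$ throughout costs $O(1)$, and we are reduced to an exponential sum on a \emph{major arc} of denominator $m$. On the GRH, Proposition~\ref{GRHprop} lets us first restrict the sum to $Q$-smooth $n$ with $Q = (\log q)^{12}$ (cost $O(1)$); then the Granville--Soundararajan identity (Proposition~\ref{GSID}), together with the exact factorisation $\sum_{n\in\s(Q)}\tfrac{g(n)}{n} = \prod_{p\le Q}\bigl(1 - \tfrac{g(p)}{p}\bigr)^{-1}$ valid for completely multiplicative $g$ (the truncation at $n\le q/d$ being negligible), rewrites the sum as a sum over $d\mid m$ and over characters of modulus dividing $m$. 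Because $\chi$ and $\xi$ have \emph{opposite parity}, the symmetrisation over $\pm n$ does not annihilate the contribution of the primitive character $\overline\xi$ (the one for which $\overline\chi\,\overline\psi$ pretends to the constant function $1$); coming from the divisor $d=1$, this contribution is a nonzero multiple of
$$
\frac{\tau(\overline\xi)\,\xi(b)}{\varphi(m)}\;\prod_{p\le Q}\Bigl(1 - \frac{\overline\chi\xi(p)}{p}\Bigr)^{-1}.
$$
Since $\overline\xi$ is primitive one has $|\tau(\overline\xi)| = \sqrt m$, and by Mertens' theorem (using $\D(\overline\chi\xi,1;X)^2 = \D(\chi,\xi;X)^2$),
$$
\Bigl|\prod_{p\le Q}\Bigl(1 - \frac{\overline\chi\xi(p)}{p}\Bigr)^{-1}\Bigr| = \exp\!\Bigl(\sum_{p\le Q}\tfrac{\text{Re }\overline\chi\xi(p)}{p} + O(1)\Bigr) \;\asymp\; (\log Q)\,e^{-\D(\chi,\xi;Q)^2} \;\asymp\; (\log\log q)\,e^{-\D(\chi,\xi;\log q)^2},
$$
the last step because $\log Q \asymp \log\log q$ and $\D(\chi,\xi;Q)^2 - \D(\chi,\xi;\log q)^2 = \sum_{\log q<p\le Q}\tfrac{1-\text{Re }\chi\overline\xi(p)}{p} = O(1)$. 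Hence the main term has the required size. When $\chi$ is even the constant term $1$ in (\ref{PolyasFourierExpansion}) has already dropped out; when $\chi$ is odd it has not, but its contribution is independent of $b$, so choosing $b$ (which only rotates the main term through the values of $\xi(b)$) suitably guarantees that the main term is not cancelled.

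The main obstacle is bounding everything else --- the divisors $d>1$, and the characters of modulus dividing $m$ other than $\overline\xi$ --- by $O\!\bigl(\tfrac{\sqrt m}{\varphi(m)}\log\log\log q\bigr)$. For a character $\psi\ne\overline\xi$ the relevant factor is an Euler product of modulus $\asymp (\log Q)\,e^{-\D(\chi,\overline\psi;Q)^2}$, so one needs $\D(\chi,\overline\psi;Q)$ to be large: a repulsion principle of Deuring--Heilbronn type --- $\chi$ cannot pretend closely to two distinct characters of small conductor at once --- whose quantitative form is the Balog--Granville--Soundararajan estimate underlying (\ref{EqBGSLowerBdOneThird}). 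The delicate point, which is really the content of the proof, is to combine these repulsion bounds with the counting over divisors of $m$ and over characters so that the \emph{aggregate} non-resonant contribution is genuinely negligible against the slack $\tfrac{\sqrt m}{\varphi(m)}\sqrt q\,\log\log\log q$, uniformly in $\chi$; a crude term-by-term estimate is not enough. With that in hand, inserting the lower bound into the displayed inequality for $|S_\chi(t)|$ completes the proof.
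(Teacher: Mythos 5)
The paper does not actually prove this statement: it is quoted verbatim from Granville--Soundararajan (their Theorem~2.5), so there is no in-paper argument to compare against. Judged on its own terms, your sketch has the right outer shell --- P\'olya's expansion at $t \approx qb/m$, GRH smoothing via Proposition~\ref{GRHprop}, isolating the $\xi$-contribution, the parity factor $1-\chi(-1)\xi(-1)=2$, and the Mertens evaluation $\bigl|\prod_{p\le Q}(1-\overline\chi\xi(p)/p)^{-1}\bigr| \asymp (\log\log q)\,e^{-\D(\chi,\xi;\log q)^2}$ are all correct and are all part of the real proof. But the step you explicitly defer --- controlling the aggregate contribution of the characters $\psi\ne\overline\xi$ and the divisors $d>1$ at a \emph{single} fixed $b$ --- is not a technicality; it is the heart of the matter, and the route you propose (repulsion bounds of Balog--Granville--Soundararajan type plus careful counting) cannot close it. Those bounds give, at best, the error terms of Theorem~\ref{MajorArcsThm}, namely $\frac{1}{\sqrt m}(\log Q)^{2/3+o(1)} + \sqrt m\, e^{C\sqrt{\log\log Q}}$, and neither term is $O\bigl(\frac{\sqrt m}{\varphi(m)}\log\log\log q\bigr)$, nor is it dominated by the main term $\frac{\sqrt m}{\varphi(m)}(\log\log q)e^{-\D^2}$ uniformly in $m$ and $\D$. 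Worse, for a \emph{lower} bound you would need to rule out cancellation between the main term and these contributions at the particular $b$ chosen, which no upper bound on their size can do.

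The missing idea is orthogonality in $b$: instead of fixing one $b$, average $S_\chi(qb/m)$ against $\overline\xi(b)$ over $b \pmod m$. Writing the P\'olya expansion and interchanging sums, $\sum_{b\,(\modulo m)}\overline\xi(b)\,e(-nb/m) = \xi(-n)\,\tau(\overline\xi)$ for primitive $\xi$, so the $b$-independent ``$1$'' in $\bigl(1-e(-nt/q)\bigr)$ and every character $\psi\ne\overline\xi$ (indeed the entire Granville--Soundararajan decomposition) vanish identically, leaving exactly
$\tau(\overline\xi)\bigl(\xi(-1)-\chi(-1)\bigr)\sum_{n\le q}\overline\chi\xi(n)/n$
up to the P\'olya and smoothing errors. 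Since
$\max_{t\le q}|S_\chi(t)| \ge \frac{1}{\varphi(m)}\bigl|\sum_{b}\overline\xi(b)S_\chi(qb/m)\bigr|$,
this yields the claimed $\frac{\sqrt m}{\varphi(m)}\sqrt q\,(\log\log q)e^{-\D(\chi,\xi;\log q)^2}$ with no repulsion input at all; the additive slack $\frac{\sqrt m}{\varphi(m)}\sqrt q \log\log\log q$ (together with the trivial case where the right-hand side is $\ll\sqrt q$) absorbs the error terms. One small further caution: you take $t$ to be the nearest integer to $qb/m$ and price the change from $e(-nt/q)$ to $e(-nb/m)$ at $O(1)$; after restoring the factor $|\tau(\chi)|=\sqrt q$ this is an $O(\sqrt q)$ error, which is not covered by the additive slack for general $m$ --- either evaluate $S_\chi$ at the real point $t=qb/m$ directly, or dispose of this via the trivial-case reduction.
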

To prove Theorem 3 it therefore suffices to show that there is an odd character
${\xi \, (\text{mod } m)}$ and an infinite family of characters $\chimodq$ of odd order $g$ such that
\[
\D(\chi, \xi; \log q)^2 \leq (\delta_g + \epsilon) \log \log \log q
\]
or equivalently, that
\begin{equation}
\label{Eq:UpperBdOnDist}
\sum_{p \leq \log q} \frac{1}{p} \, \text{Re } \chi(p) \overline{\xi(p)} \geq
\bigl( 1 - \delta_g - \epsilon \bigr) \log \log \log q .
\end{equation}
We will accomplish this in two steps. First, using ideas similar to that of the previous section, we will prove:
\begin{prop}
\label{Prop:LowerBdWithXi}
For any $\epsilon > 0$, there exists an odd character $\xi \, (\text{mod } m)$ such that for
$y \geq \exp(m^\epsilon)$,
\begin{equation}
\label{Eq:LowerBdWithoutChi}
\sum_{p \leq y} \frac{1}{p} \, \max_{z \in \mu_g \cup \{0\}} \text{\emph{Re} } z \, \overline{\xi(p)}
\geq
\big(1 - \epsilon + o(1)\big) (1 - \delta_g) \log \log y;
\end{equation}
$o(1) \to 0$ as $y \to \infty$.
\end{prop}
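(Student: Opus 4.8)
The plan is to take $\xi$ to be a primitive \emph{odd} character whose order is a large power of $2$, chosen so that the ``best--approximation average'' appearing below is as close as desired to $1-\delta_g = \frac{g}{\pi}\sin\frac{\pi}{g}$, and then to evaluate the left side of (\ref{Eq:LowerBdWithoutChi}) by exploiting the equidistribution of $\xi(p)$ among the $k$--th roots of unity --- which is exactly what Lemma~\ref{SWapp} provides.

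For the construction of $\xi$ I would fix a large integer $a$ (to be specified at the end in terms of $g$ and $\epsilon$) and, using Dirichlet's theorem on primes in progressions, pick a prime $q \equiv 1 + 2^a \pmod{2^{a+1}}$, so that $2^a\,\|\,q-1$. Since $(\Z/q\Z)^*$ is cyclic of order $q-1$, there is a primitive character $\xi \pmod q$ of order exactly $k := 2^a$; a short computation shows $\xi(-1) = -1$ (this is where the exact divisibility $2^a\,\|\,q-1$ is needed), so $\xi$ is odd, with conductor $m = q$. As $g$ is odd, $(g,k)=1$, hence $k^* := k/(g,k) = 2^a$.

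To evaluate the prime sum, I would first note that $\max_{z \in \mu_g \cup \{0\}} \text{Re } z\,\overline{\xi(p)} \ge 0$ for every prime $p$ (take $z = 0$), so one loses nothing by discarding all primes $p \le x_0 := \exp\bigl((\log y)^\epsilon\bigr)$. On $(x_0,y]$ I would run the short--interval argument from the proof of Theorem~\ref{GeneralizeLemma32}: with $x_r = x_0(1+\delta)^r$ and $\delta \asymp (\log y)^{-3}$, the quantity $\max_z \text{Re } z\,\overline{\xi(p)}$ depends only on $\xi(p) \in \mu_k$, so grouping primes according to $\xi(p) = e(\ell/k)$ and applying Lemma~\ref{SWapp} (valid since $y \ge \exp(m^\epsilon)$ and $x_r \ge x_0$) gives
$$
\sum_{x_r < p \le x_{r+1}} \frac{1}{p}\,\max_{z \in \mu_g \cup \{0\}} \text{Re } z\,\overline{\xi(p)} = \left(\frac{1}{k}\sum_{\ell \, (\text{mod } k)} \max_{z \in \mu_g \cup \{0\}} \text{Re } z\, e\!\left(-\frac{\ell}{k}\right)\right)\frac{\delta}{\log x_r}\,(1+o(1)).
$$
Lemma~\ref{summin} at $\theta = 0$ (so $F_{gk^*}(0)=1$) evaluates the bracketed average as $\dfrac{\sin(\pi/g)}{k^*\tan(\pi/(gk^*))}$, which $\to 1-\delta_g$ as $a \to \infty$; I would fix $a$ so that it exceeds $(1-\epsilon)(1-\delta_g)$. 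Summing over $r$, comparing the resulting left Riemann sum with $\int_{\log x_0}^{\log y}\frac{dt}{t}$ as in Theorem~\ref{GeneralizeLemma32}, and using $\log\log x_0 = \epsilon\log\log y$, one bounds the left side of (\ref{Eq:LowerBdWithoutChi}) below by $(1-\epsilon)^2(1-\delta_g)\log\log y\,(1+o(1))$; finally, running the argument from the start with $\epsilon/2$ in place of $\epsilon$ (which only affects the choice of $a$ and the exponent in $x_0$, and keeps the hypothesis $y \ge \exp(m^\epsilon)$ in force) replaces $(1-\epsilon)^2$ by a quantity exceeding $1-\epsilon$, yielding the bound in the stated form.

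The main obstacle is genuinely minor: both substantive inputs --- the equidistribution of $\xi(p)\pmod m$ (Lemma~\ref{SWapp}, via Siegel--Walfisz) and the closed form for the average over $\ell$ (Lemma~\ref{summin}) --- are already available, so the work is assembly. What needs a little care is exhibiting an odd character of arbitrarily large $2$--power order (a one--line use of Dirichlet's theorem) and arranging the two sources of loss --- the truncation of the prime range at $x_0$ and the use of a character of finite order --- so that together they cost only the single factor $1-\epsilon+o(1)$.
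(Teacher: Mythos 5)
Your proposal is correct and follows essentially the same route as the paper: Siegel--Walfisz equidistribution of $\xi(p)$ among the $k$-th roots of unity, the closed-form evaluation of $\frac{1}{k}\sum_{\ell}\max_{z}\mathrm{Re}\,z\,e(-\ell/k)$ from Lemma \ref{summin} at $\theta=0$, and letting $k^{*}\to\infty$ with $(g,k)=1$. The only differences are cosmetic: the paper applies Siegel--Walfisz in one shot over all $p\le y$ (so there is no truncation at $x_{0}$, no short-interval/Riemann-sum assembly, and no second $\epsilon$-loss to rescale away), and it simply takes $\xi$ of any large even order coprime to $g$ rather than giving your explicit construction of an odd character of $2$-power order.
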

\vspace{-0.2in}
Given such a $\xi$, to deduce (\ref{Eq:UpperBdOnDist}) it suffices to find a $\chimodq$ whose values at primes up to $\log q$ coincide with the $z$ which maximize each term of (\ref{Eq:LowerBdWithoutChi}). Using a generalization of Eisenstein's reciprocity law and the Chinese Remainder Theorem, we will prove:
\begin{prop}
\label{Prop:ChiApproxFn}
Fix an odd integer $g \geq 3$, and let $\psi : \Z \longrightarrow \mu_g \cup \{0\}$ be a completely multiplicative function. Then there exists a constant $C = C(g) > 0$ and infinitely many Dirichlet characters $\chimodq$ of order $g$ such that $\chi(n) = \psi(n)$ for all $n \leq C \, \log q$ which are coprime to $g$.
\end{prop}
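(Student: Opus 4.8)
The plan is to realize $\chi$ as a $g$-th power residue symbol attached to a cleverly chosen prime of $\Z[\zeta_g]$, use reciprocity to convert the requirement ``$\chi$ agrees with $\psi$ at all primes up to $z$'' into a congruence condition on that prime, and then apply a Linnik-type theorem to keep the modulus $q$ below $e^{z/C}$. Because $\chi$ and $\psi$ are completely multiplicative and every prime factor of an integer $n\le z$ is itself $\le z$, agreement at the primes $r\le z$ coprime to $g$ forces $\chi(n)=\psi(n)$ for all such $n\le z$, so this construction gives the assertion with $C=C(g)$. We may assume $\psi$ is nowhere zero --- it is in our application, and a bounded number of zeros of $\psi$ could in any case be absorbed into $q$. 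A preliminary reduction disposes of the shape of $g$: writing $g=\prod_i g_i$ with the $g_i$ the prime-power parts of $g$ and using $\mu_g\cong\prod_i\mu_{g_i}$, factor $\psi=\prod_i\psi_i$ with each $\psi_i$ completely multiplicative into $\mu_{g_i}$; producing for each $i$ a character $\chi^{(i)}$ of exact order $g_i$, to a distinct prime modulus, matching $\psi_i$ on $n\le z$, the product $\chi=\prod_i\chi^{(i)}$ has exact order $\prod_i g_i=g$ and matches $\psi$. Thus I may and do assume $g=p^a$ is a prime power; when $g$ is squarefree every $g_i$ is a prime, which is the source of the claimed explicitness of $C$.

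Fix $g=p^a$ and $\zeta=e(1/g)$, and let $Q$ be a primary element of $\Z[\zeta]$ generating a prime ideal of prime norm $\ell=|NQ|\equiv1\pmod g$; such $Q$ exist in abundance, since the degree-one primes of $\Q(\zeta)$ lying in the principal ideal class have positive density (Chebotarev over the Hilbert class field), and these automatically lie above rational primes $\ell\equiv1\pmod g$. The $g$-th power residue symbol $n\mapsto\left(\tfrac{n}{Q}\right)_g$ restricts on $\Z$ to a primitive Dirichlet character $\chi\pmod\ell$ of exact order $g$, surjectivity onto $\mu_g$ following from $\ell\equiv1\pmod g$. The key step is reciprocity. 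For a rational prime $r\le z$ with $r\nmid g\ell$, a generalization of Eisenstein's reciprocity law to $p^a$-th power residues in $\Z[\zeta_{p^a}]$ --- the classical Eisenstein law when $a=1$ --- gives $\chi(r)=\left(\tfrac{r}{Q}\right)_g=\varepsilon_g(r)\left(\tfrac{Q}{r}\right)_g$, where the root of unity $\varepsilon_g(r)\in\mu_g$ arising from the primary normalization and the places above $g$ depends only on $r$ and $g$. By definition the complementary symbol $\left(\tfrac{Q}{r}\right)_g$ is a function of $Q\bmod r$ alone, through the finite ring $\Z[\zeta]/r\Z[\zeta]$, and as $Q$ runs over the units of that ring it is a surjection onto $\mu_g$. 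Consequently the requirement $\chi(r)=\psi(r)$ is exactly the requirement that $Q$ lie in a fixed, nonempty (in fact positive-proportion) set of residue classes modulo $r$.

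Imposing this for all primes $r\le z$ coprime to $g$, together with the finitely many congruences modulo small fixed moduli encoding ``$Q$ primary'' and ``$(Q)$ a prime of prime norm $\equiv1\pmod g$'', the Chinese Remainder Theorem confines $Q$ to a nonempty union of residue classes modulo some $D$ with $\log D=(1+o(1))z$, since $\log\prod_{r\le z}r\asymp z$; the distinct primes $r$ contribute independent constraints, so these admissible classes form a fixed positive proportion of all classes. It remains to find a prime element $Q\in\Z[\zeta_g]$ of prime norm lying in one of these classes with $\ell=|NQ|$ not too large. This is a Linnik-type statement about primes of $\Q(\zeta_g)$ in ray classes; since every result in this section is conditional on the GRH, effective Chebotarev applied to a ray class field of $\Q(\zeta_g)$ of conductor dividing $D$ produces such a $Q$ with $\ell\ll_g D^{B_g}$, where $B_g$ depends only on $g$ (one can take $B_g$ close to $2$, and fully explicit when $g$ is squarefree, the underlying field data then depending only on $\varphi(g)$). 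Hence $z=(1+o(1))\log D\ge C\log\ell=C\log q$ for a fixed $C=C(g)>0$. Letting $z\to\infty$ yields infinitely many distinct primes $q=\ell$ and characters $\chi=\prod_i\chi^{(i)}$ of order $g$ with $\chi(n)=\psi(n)$ for every $n\le z$, in particular for every $n\le C\log q$, coprime to $g$.

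The main obstacle is the reciprocity step. One must set up a clean form of Eisenstein's law valid for $p^a$-th power residues rather than merely prime exponent, carefully track the primary normalization and the effect of the choice of generator $Q$ of the prime ideal (which also selects the order-$g$ character), and confirm that the resulting root of unity $\varepsilon_g(r)$ depends only on $r$ and $g$ --- so that ``$\chi(r)=\psi(r)$'' is, uniformly in the eventual choice of $q$, a never-empty congruence on $Q$. A secondary technical point is controlling the degree and discriminant of the relevant ray class field of $\Q(\zeta_g)$ sharply enough that effective Chebotarev on the GRH returns $\ell\ll_g D^{O_g(1)}$; the reduction on the shape of $g$, the CRT bookkeeping, and the passage to the limit are routine.
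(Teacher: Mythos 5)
Your overall strategy --- realize $\chi$ as a $g$-th power residue symbol, use a reciprocity law to convert the conditions $\chi(r)=\psi(r)$ for primes $r\le z$ into congruence conditions on the modulus, and glue those by CRT --- is the same as the paper's, and I believe it can be pushed through; but you take a substantially heavier route at the key point. The paper never asks the modulus to be prime: it lets $Q$ be the rational integer produced \emph{directly} by the CRT (with $Q\equiv Q_p \pmod p$ for each $p\le y$, $Q\equiv 1\pmod g$, and $Q\asymp g\prod_{p\le y}p$, so $\log Q\asymp y$), takes $\chi=\left(\frac{\cdot}{Q}\right)_g$ as a character to the composite modulus $Q$, and applies the Vostokov--Orlova reciprocity law --- valid for all odd $g$ at once, so no reduction to prime powers is needed --- to get $\chi(p)=\left(\frac{Q}{p}\right)_g=\psi(p)$ for all $p\le y\gg_g\log Q$. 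Because the modulus is constructed outright rather than sought among primes, no theorem about primes in ray classes is needed, and this step is unconditional and elementary. Your insistence on a prime modulus is what forces the reduction to $g=p^a$, the primary-normalization analysis of $p^a$-th power reciprocity, and a GRH-effective Chebotarev theorem in a ray class field of conductor about $\prod_{r\le z}r$; there the exponent you can actually extract is $B_g\asymp\varphi(g)$ rather than ``close to $2$,'' since $\log d_L\asymp[L:\Q]\log D$ with $[L:\Q]\asymp_g D^{\varphi(g)}$ (harmless, as $C$ may depend on $g$), and you must also check that the ray-class condition, which prescribes a generator of the prime ideal only up to units, can be arranged so that the \emph{specific primary generator} lies in the prescribed classes modulo each $r$ --- the reciprocity step sees the element, not just the ideal. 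What your approach buys is a primitive character to a prime modulus of exact order $g$; the proposition does not require this and the application does not use it, so the extra machinery is avoidable.
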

\vspace{-0.2in}
With these results in hand, Theorem 3 follows easily:
\vspace{-0.2in}
\begin{proof}[Proof of Theorem 3]
Proposition \ref{Prop:LowerBdWithXi} furnishes a character $\xi$ such that (\ref{Eq:LowerBdWithoutChi}) holds for all $y \geq \exp(m^\epsilon)$. For any such $y$, choose $z_p \in \mu_g \cup \{0\}$ so that
\[
\sum_{p \leq y} \frac{1}{p} \, \max_{z \in \mu_g \cup \{0\}} \text{Re } z \, \overline{\xi(p)}
=
\sum_{p \leq y} \frac{1}{p} \, \text{Re } z_p \, \overline{\xi(p)} .
\]
By Proposition \ref{Prop:ChiApproxFn} we can find infinitely many characters $\chimodq$ such that $\chi(p) = z_p$ for all $p \leq C \log q$ which are coprime to $g$. For any such $\chi$, we therefore have
\[
\sum_{p \leq C \log q} \frac{1}{p} \, \text{Re } \chi(p) \overline{\xi(p)}
=
\sum_{p \leq C \log q} \frac{1}{p} \, \text{Re } z_p \, \overline{\xi(p)} +
O\left(\sum_{p \mid g} \frac{1}{p}\right) .
\]
Since $g$ is fixed, (\ref{Eq:LowerBdWithoutChi}) implies (\ref{Eq:UpperBdOnDist}); applying Theorem \ref{Thm:GS25}
yields Theorem 3.
\end{proof}
\vspace{-0.2in}
It remains only to prove the two propositions.
\begin{proof}[Proof of Proposition \ref{Prop:LowerBdWithXi}]
Let $\xi \, (\text{mod } m)$ be an odd character. Then its order $k$ must be even, and (exactly as in the previous section) we have
\[
\sum_{p \leq y} \frac{1}{p}\, \max_{z \in \mu_g \cup \{0\}} \text{Re } z \overline{\xi(p)} =
\sum_{\ell \, (\text{mod } k)} \max_{z \in \mu_g \cup \{0\}} \text{Re } z e\Big(- \frac{\ell}{k}\Big)
\mathop{\sum_{p \leq y}}_{\xi(p) = e\big(\frac{\ell}{k}\big)} \frac{1}{p} .
\]
Siegel-Walfisz implies that
\[
\mathop{\sum_{p \leq y}}_{\xi(p) = e\big(\frac{\ell}{k}\big)} \frac{1}{p} =
\frac{1+o(1)}{k} \log \log y
\]
and relation (\ref{two}) (with $\theta = 0$) gives
\[
\sum_{\ell \, (\text{mod } k)} \max_{z \in \mu_g \cup \{0\}} \text{Re } z e\Big(- \frac{\ell}{k}\Big)
=
(g,k) \frac{\sin \frac{\pi}{g}}{\tan \frac{\pi}{gk^*}} .
\]
Putting these estimates together yields
\[
\sum_{p \leq y} \frac{1}{p}\, \max_{z \in \mu_g \cup \{0\}} \text{Re } z \, \overline{\xi(p)} =
\big(1 - \delta_g + o(1)\big) \frac{\frac{\pi}{gk^*}}{\tan \frac{\pi}{gk^*}} \log \log y .
\]
The function $\frac{x}{\tan x}$ tends to 1 from below as $x \to 0$, so to prove the proposition it suffices to find a sequence of $k^*$ tending to infinity. Since $g$ is fixed and $k^* = k / (g,k)$, this is easily achieved by choosing $\xi$ of order $k$ relatively prime to $g$.
\end{proof}

\begin{proof}[Proof of Proposition \ref{Prop:ChiApproxFn}]
Let $y$ be large (this is an auxiliary parameter which will tend to infinity).
Given a prime $p \dnd g$, there exists an integer $Q_p$ such that
$\displaystyle \Bigl( \frac{Q_p}{p} \Bigr)_g = \psi(p)$, where $\left( \frac{\cdot}{\cdot} \right)_g$ is the $g^\text{th}$ order residue symbol. By the Chinese Remainder Theorem, there exists a $Q = Q(y)$ satisfying
\begin{enumerate}
\item $Q \equiv Q_p \, (\text{mod } p)$ for all primes $p \leq y$ such that $p \dnd g$;
\item $Q \equiv 1 \, (\text{mod } g)$; and
\item $\displaystyle g \mathop{\prod_{p \leq y}}_{p \dnd g} p < Q \leq 2g \mathop{\prod_{p \leq y}}_{p \dnd g} p$.
\end{enumerate}
It follows that
\begin{equation}
\label{Eq:CRT}
\displaystyle \Bigl( \frac{Q}{p} \Bigr)_g = \psi(p)
\end{equation}
for all $p \leq y$ coprime to $g$.

We now wish to use reciprocity for the $g^\text{th}$ order residue symbol to obtain a $g^\text{th}$-order character of modulus $Q$. For $g$ an odd prime, this is given by the Eisenstein reciprocity law. Recently, Vostokov and Orlova \cite{Vostokov-Orlova} gave a generalization of the reciprocity law to all odd $g$. In our situation, their result implies that
\[
\Bigl( \frac{Q}{p} \Bigr)_g = \Bigl( \frac{p}{Q} \Bigr)_g
\]
for all $p \dnd g$.

By the Prime Number Theorem and our restriction on the size of $Q$, we see that
\[
\log Q \asymp y + \log \frac{g}{\text{rad } g}
\]
where rad $g$ denotes the radical of $g$. It follows that there exists a constant $C = C(g)$ such that $y \geq C \, \log Q$. Combining this with (\ref{Eq:CRT}) and the Vostokov-Orlova reciprocity, we deduce that $\displaystyle \Bigl( \frac{p}{Q} \Bigr)_g = \psi(p)$ for all $p \leq C \, \log Q$ relatively prime to $g$. By complete multiplicativity,
\begin{equation}
\label{Eq:FinalEq}
\Bigl( \frac{n}{Q} \Bigr)_g = \psi(n)
\end{equation}
for all $n \leq C \, \log Q$ coprime to $g$. Letting $y$ tend to infinity, we see that $Q$ must also tend to infinity, whence we find infinitely many $Q$ satisfying (\ref{Eq:FinalEq}). This concludes the proof.
\end{proof}

\textsc{Department of Mathematics, University of Toronto,\newline
Toronto, Ontario, Canada}

\textit{Email:} \texttt{leo.goldmakher@utoronto.ca}

\end{document}